\pgfplotsset{compat=1.16}
\definecolor{light}{gray}{.75}
\definecolor{med}{gray}{.5}
\definecolor{dark}{gray}{.25}
\newtheorem{theorem}{Theorem}
\numberwithin{theorem}{section}
\newtheorem{proposition}[theorem]{Proposition}
\newtheorem{corollary}[theorem]{Corollary}
\newtheorem{lemma}[theorem]{Lemma}
\newtheorem{conjecture}[theorem]{Conjecture}
\theoremstyle{definition}
\newtheorem{definition}[theorem]{Definition}
\newtheorem{remark}[theorem]{Remark}
\newtheorem{example}[theorem]{Example}
\newcommand{\C}{{\mathbb C}}
\newcommand{\Q}{{\mathbb Q}}
\newcommand{\RR}{\mathcal{R}}
\newcommand{\Z}{{\mathbb Z}}
\renewcommand{\P}{{\mathbb P}}
\newcommand{\MM}{\mathcal{M}}
\newcommand{\B}{\mathbb{B}}
\newcommand{\E}{\mathcal{E}}
\newcommand{\FL}{\mathcal{FL}}
\renewcommand{\O}{\mathcal{O}}
\newcommand{\V}{\mathcal{V}}
\renewcommand{\L}{\mathcal{L}}
\renewcommand{\S}{\textup{S}}
\newcommand{\D}{\mathcal{D}}
\newcommand{\Trop}{\textup{Trop}}
\newcommand{\T}{\mathcal{T}}
\newcommand{\I}{\mathcal{I}}
\newcommand{\be}{\mathbf{e}}
\newcommand{\bd}{\mathbf{d}}
\newcommand{\ba}{\mathbf{a}}
\newcommand{\bu}{\mathbf{u}}
\newcommand{\In}{\textup{in}}
\renewcommand{\v}{\mathfrak{v}}
\renewcommand{\In}{\textup{in}}
\newcommand{\CL}{\textup{CL}}
\newcommand{\GL}{\textup{GL}}
\newcommand{\SL}{\textup{SL}}
\newcommand{\Sym}{\textup{Sym}}
\newcommand{\Gr}{\textup{Gr}}
\newcommand{\Bpf}{\textup{Bpf}}
\newcommand{\Bl}{\textup{Bl}}
\title{Algebra and Geometry of Irreducible toric vector bundles of rank $n$ on $\P^n$}
\author{Courtney George, Christopher Manon}
\begin{document}

\maketitle

\begin{abstract}
We construct a presentation for the Cox ring of the projectivization $\P\E$ of any rank $n$ irreducible toric vector bundle on $\P^n$.  We use this presentation to show that $\P\E$ always satisfies Fujita's freeness and ampleness conjectures.
\end{abstract}

\tableofcontents

\section{Introduction}

A toric vector bundle $\pi: \E \to \P^n$ is a vector bundle equipped with the additional information of an algebraic action by the torus $T\cong (\C^*)^n$.  This action is required to intertwine with the usual action on $\P^n$, and induce linear maps between fibers. We let $\P\E$ denote the \emph{projectivization} of $\E$. Like toric varieties, projectivized toric vector bundles are a rich class of algebraic varieties which display a wide range of behavior, yet admit a combinatorial description. 

In \cite{Kaneyama88}, Kaneyama shows that any toric vector bundle on $\P^n$ of rank $<n$ must split into a direct sum of line bundles, and gives a complete classification of the irreducible toric vector bundles of rank $n$. Let $\O(a)$ for $a \in \Z$ denote the line bundle of degree $a$ over $\P^n$. Any irreducible toric vector bundle of rank $n$ can be described as the cokernel $\E_\ba$ of a map $\O \to \bigoplus_{j = 0}^n \O(a_i)$ where $\ba = \{a_0, \ldots, a_n\}\subset\Z_{> 0}$, or as the dual $\E_\ba^\vee$ of such a bundle. In particular, the tangent bundle $\T\P^n$ and cotagent bundle $\T^\vee\P^n$ are the case $a_j = 1$.  In this paper we study the projectivizations $\P\E_\ba$ and $\P\E_\ba^\vee$ of these bundles.  

\subsection{Mori dream spaces}

Recall that a complete, normal variety $X$ is said to be a \emph{Mori dream space} if its Cox ring $\RR(X)$ is finitely generated. See the book \cite{ADUH-book} for background on Cox rings.  In \cite{HMP}, Hering, Musta\c{t}\u{a}, and Payne ask when a projectivized toric vector bundle is a Mori dream space. The first work on this question is due to Hausen and S\"u\ss \ \cite{Hausen-Suss}, where they answer the question in the positive for tangent bundles of toric varieties.  In \cite{Gonzalez-rank2}, Gonzal\'ez shows that all rank $2$ vector bundles give Mori dream spaces.  Many non-examples are found by Gonzal\'ez, Hering, Payne, and S\"u\ss \ \cite{GHPS} by relating projectivized toric vector bundles to blow-ups of projective spaces. In what follows we let $\RR(\P\E)$ denote the \emph{Cox ring} of the projectivized bundle $\P\E$. 

In \cite{Kaveh-Manon-tvb}, Kaveh and the 2nd author show that the data of a toric vector bundle can be encoded in a pair $(L, D)$, where $L$ is a linear ideal, and $D$ is an integral matrix called the \emph{diagram}. Sufficient conditions for this data to define a bundle $\E$ with $\P\E$ is a Mori dream space are given \cite{Kaveh-Manon-tvb}, \cite{George-Manon}, \cite{George-Manon-Positivity}. In Section \ref{sec-diagram} we find pairs $(L_n, D_\ba)$ and $(L_n^\vee, D_\ba^\vee)$ for $\E_\ba$ and $\E_\ba^\vee$, respectively. Notably, the ideals $L_n$ and $L_n^\vee$ depend only on the dimension $n$.

\begin{theorem}\label{thm-main-pairs}
For any $\ba \subset \Z_{> 0}$ the diagram $D_\ba$ is the diagonal matrix with entries $\ba$, and the ideal $L_n \subset \C[y_0, \ldots, y_n]$ is generated by $y_0 + \cdots + y_n$.
The ideal $L_n^\vee \subset \C[z_{ij} \mid 0 \leq i < j \leq n]$ is generated by the forms $z_{ik} - z_{ij} - z_{jk}$, for all $i < j < k$, and the diagram $D_\ba^\vee$ has $i, jk$-th entry $0$ if $i \notin \{j, k\}$ and $-a_i$ otherwise.
\end{theorem}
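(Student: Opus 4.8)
\emph{Proof proposal.} The plan is to work on the level of Klyachko filtrations and then translate into the language of the pair $(L,D)$ via the dictionary of \cite{Kaveh-Manon-tvb} recalled in Section~\ref{sec-diagram}: a choice of generators $g_1,\dots,g_N$ of the generic fiber is recorded by the coordinates of the ambient $\C^N$, the $\C$-linear relations among the $g_i$ generate the linear ideal $L$, and the integer entry in position $(i,\rho)$ of the diagram is the jump of $g_i$ in the filtration attached to the ray $\rho$, so that the reconstructed filtration is $E^{\rho}(m)=\operatorname{span}\{g_i : D_{i\rho}\ge m\}$. Denoting the rays of the fan of $\P^n$ by $\rho_0,\dots,\rho_n$, I would use the equivariant presentations $0\to\O\to\bigoplus_{i=0}^n\O(a_iD_{\rho_i})\to\E_\ba\to 0$ and its dual $0\to\E_\ba^\vee\to\bigoplus_{i=0}^n\O(-a_iD_{\rho_i})\to\O\to 0$, where $D_{\rho_i}$ is the invariant divisor cut out by the $i$-th coordinate and each $\O(\pm a_iD_{\rho_i})$ is an equivariant line bundle whose Klyachko filtration jumps by $\pm a_i$ at $\rho_i$ and by $0$ at every other ray. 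Since all summands are explicit, every filtration needed below can be computed by hand.

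For $\E_\ba$ I would take the quotient filtrations. Writing $f_0,\dots,f_n$ for the fiber basis of $\bigoplus\O(a_iD_{\rho_i})$, the image of $\O$ is a nowhere-vanishing line, which I normalize to the span $W=\langle f_0+\cdots+f_n\rangle$ (so the cokernel is locally free of rank $n$). Computing $E^{\rho_i}(m)=(F^{\rho_i}(m)+W)/W$ would show that at $\rho_i$ the filtration equals $\E_\ba$ for $m\le 0$, the line $\langle\bar f_i\rangle$ for $1\le m\le a_i$, and $0$ for $m>a_i$. Hence the images $\bar f_0,\dots,\bar f_n$ are a spanning set subject to the single relation $\bar f_0+\cdots+\bar f_n=0$, giving $L_n=(y_0+\cdots+y_n)$; and $\bar f_i$ has jump $a_i$ at $\rho_i$ and jump $0$ at every $\rho_l$ with $l\ne i$, which is exactly the diagonal diagram $D_\ba$.

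For $\E_\ba^\vee$ I would dualize these filtrations. The generic fiber is $E_\ba^\vee=\{\,y_0+\cdots+y_n=0\,\}$, and the annihilator of the line $\langle\bar f_i\rangle$ turns the one-dimensional pieces above into the hyperplanes $H_i=\{y_i=0\}\cap E_\ba^\vee$: at $\rho_i$ the dual filtration equals $E_\ba^\vee$ for $m\le -a_i$, equals $H_i$ for $-a_i<m\le 0$, and is $0$ for $m>0$. The key observation is that the $H_i$ form a braid ($A_n$) arrangement, and the generating set of $E_\ba^\vee$ simultaneously compatible with all of them is the set of root vectors $v_{ij}=e_i-e_j$ for $0\le i<j\le n$, since $v_{ij}\in H_k$ precisely when $k\notin\{i,j\}$. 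These $\binom{n+1}{2}$ generators satisfy exactly the triangle relations $v_{ik}-v_{ij}-v_{jk}=0$, which (by the standard fact that the cycle space of $K_{n+1}$ is generated by triangles) generate all their linear relations and yield $L_n^\vee=(z_{ik}-z_{ij}-z_{jk}\mid i<j<k)$. Finally $v_{jk}$ lies in $H_i$, hence has jump $0$ at $\rho_i$, unless $i\in\{j,k\}$, in which case its jump is $-a_i$, producing the stated $D_\ba^\vee$.

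The two annihilator/quotient computations are routine; the delicate points are concentrated in the dual bundle. First, the diagram is only well defined up to shifting each column by a character, so I must fix the equivariant structures (the summands $\O(\pm a_iD_{\rho_i})$ with $\O$ trivially linearized) that realize the stated representative rather than a twist of it. Second, and this is the crux, I must verify that the $\binom{n+1}{2}$ root vectors really are a minimal generating set compatible with \emph{every} ray filtration at once, i.e. that each filtration subspace is spanned by the $v_{ij}$ it contains; this is precisely the combinatorics of the braid arrangement and is what forces both the pairwise index set and the cocycle relations defining $L_n^\vee$. The Klyachko compatibility of the reconstructed data requires no separate check, since $\E_\ba$ and $\E_\ba^\vee$ are genuine equivariant (co)kernels and therefore automatically satisfy the gluing condition.
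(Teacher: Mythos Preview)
Your proof is correct and, for the dual bundle, essentially identical to the paper's argument in Section~\ref{sec-diagram}: your root vectors $v_{ij}=e_i-e_j$ are exactly the paper's $z_{ij}=y_i^\vee-y_j^\vee$, and both verify the triangle relations and read off $D_\ba^\vee$ by checking which $v_{jk}$ lie in the annihilator hyperplane $H_i$.

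The one genuine difference is in the $\E_\ba$ half. You fix the specific equivariant splitting $\bigoplus_i\O(a_iD_{\rho_i})$ from the start and read the diagonal diagram directly off the quotient Klyachko filtration. The paper (Proposition~\ref{prop-pair-ba}) instead begins with an arbitrary equivariant presentation $\bigoplus_j\O(D_j)$ and argues combinatorially: each collection of $n$ rows must share an adapted basis, forcing a zero column in those rows, and irreducibility then rules out any two collections sharing the same basis, pinning the diagram down to diagonal form. Your route is more direct for the theorem as stated; the paper's argument buys a bit more, partially reconstructing Kaneyama's classification by showing that \emph{any} irreducible cokernel of this shape must have diagonal diagram up to normalization.

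One small caution on phrasing: the compatibility you flag as ``the crux'' should be stated for maximal cones $\sigma_\ell$, not individual rays---you need a single basis adapted to all rays of $\sigma_\ell$ simultaneously. This is the set $\{v_{j\ell}:j\neq\ell\}$ (up to sign), which is precisely the dual adapted basis $\B_{\sigma_\ell}^\vee$ the paper identifies; your observation that each $H_i$ is spanned by the $v_{jk}$ it contains is what makes this basis work.
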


\begin{example}
Let $\ba = \{a_0, a_1, a_2, a_3\} \subset \Z_{>0}$, and let $\E_\ba$ and $\E_\ba^\vee$ be the corresponding irreducible bundles of rank $3$ on $\P^3$. We describe the pairs $(L_3, D_\ba)$ and $(L_3^\vee, D_\ba^\vee)$ for these bundles. 

For $\E_\ba$ we have $L_3 = \langle y_0 + y_1 + y_2 + y_3\rangle \subset \C[y_0, y_1, y_2, y_3]$, and\\

\[D_\ba = \begin{bmatrix}
a_0 & 0 & 0 & 0 \\
0 & a_1 & 0 & 0 \\
0 & 0 & a_2 & 0 \\
0 & 0 & 0 & a_3 \\
\end{bmatrix}.\]\\

\noindent
For $\E_\ba^\vee$ we have $L_3^\vee = \langle z_{03} - z_{02} - z_{23}, z_{02} - z_{01} - z_{12}, z_{13} - z_{12} - z_{23}\rangle \subset \C[z_{01}, z_{02}, z_{03}, z_{12}, z_{13}, z_{23}]$, and

\[D_\ba^\vee = \begin{bmatrix}
-a_0 & -a_0 & -a_0 & 0 & 0 & 0 \\
-a_1 & 0 & 0 & -a_1 & -a_1 & 0 \\
0 & -a_2 & 0 & -a_2 & 0 & -a_2 \\
0 & 0 & -a_3 & 0 & -a_3 & -a_3 \\
\end{bmatrix}.\]\\

\end{example}

The matroid defined by the ideal $L_n^\vee$ is that of the type $A_{n+1}$ root system, or equivalently, the graphical matroid defined by a complete graph on $n+1$ vertices. Theorem \ref{thm-main-pairs} has several nice corollaries when it is paired with results from \cite{George-Manon}.  For any toric vector bundle $\E$ there is an associated \emph{full flag bundle} $\FL(\E)$, see \cite{George-Manon}, and Section \ref{sec-MDS}.

\begin{corollary}\label{cor-flag}
Let $V$ be a finite dimensional vector space, and $\ba \subset \Z_{>0}$, then $\P(\E_\ba\otimes V)$, $\P(\E_{\ba}^\vee\otimes V)$, $\FL(\E_\ba \otimes V)$, and $\FL(\E_\ba^\vee \otimes V)$ are Mori dream spaces.
\end{corollary}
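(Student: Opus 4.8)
The plan is to reduce all four statements to a single criterion from \cite{George-Manon} and to exploit the fact, visible in Theorem \ref{thm-main-pairs}, that $L_n$ and $L_n^\vee$ are independent of $\ba$ --- only the diagrams $D_\ba$, $D_\ba^\vee$ record $\ba$. Concretely, all four spaces are of the form $\P(\E')$ or $\FL(\E')$ for $\E'\in\{\E_\ba\otimes V,\ \E_\ba^\vee\otimes V\}$, and since $V$ carries the trivial torus action we have $\E_\ba\otimes V\cong\E_\ba^{\oplus m}$ and $\E_\ba^\vee\otimes V\cong(\E_\ba^\vee)^{\oplus m}$ with $m=\dim V$. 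Thus everything reduces to direct sums of $\E_\ba$ and $\E_\ba^\vee$, and provided the Mori dream space criterion of \cite{George-Manon} is governed by the matroid of the linear ideal alone, it suffices to identify these matroids and check that they survive the formation of direct sums.

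Next I would read off the matroids. The single generator $y_0+\cdots+y_n$ of $L_n$ is supported on all $n+1$ coordinates, so its matroid is the uniform matroid $U_{n,n+1}$, equivalently the cycle matroid of the $(n+1)$-cycle, which is graphical; for $L_n^\vee$ the excerpt already records the matroid of the complete graph $K_{n+1}$, graphical by definition. Under the direct-sum compatibility of the $(L,D)$-dictionary from \cite{Kaveh-Manon-tvb}, the matroid of $\E_\ba^{\oplus m}$ is the direct sum of $m$ copies of $U_{n,n+1}$, i.e. the cycle matroid of $m$ disjoint $(n+1)$-cycles, and similarly $(\E_\ba^\vee)^{\oplus m}$ gives $m$ disjoint copies of $K_{n+1}$. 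A disjoint union of graphs is again a graph, so both remain graphical.

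I would then invoke the criterion of \cite{George-Manon}: a toric vector bundle whose linear ideal defines a graphical matroid has the property that its projectivization and its full flag bundle are Mori dream spaces. Applying this to $\E_\ba^{\oplus m}$ and $(\E_\ba^\vee)^{\oplus m}$ yields precisely the four assertions. The main obstacle is not any computation but the precise matching of hypotheses: I must confirm that the cited theorem depends only on the matroid (so that the change of diagram caused by varying $\ba$, or by tensoring with $V$, is immaterial), that ``graphical'' is exactly the class it covers, and that it produces the Mori dream space property for both $\P(-)$ and $\FL(-)$. The one genuinely structural point to nail down is the direct-sum compatibility of the dictionary, which is what guarantees that tensoring by $V$ merely replaces the defining graph by disjoint copies of itself and so keeps us inside the graphical regime.
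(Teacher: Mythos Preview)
Your route differs from the paper's. Rather than invoking a matroid-only criterion for each of the four bundles separately, the paper establishes a bootstrapping statement (Proposition~\ref{prop-flagdual}): if $\FL(\E)$ is a Mori dream space, then $\P(\E\otimes V)$, $\FL(\E\otimes V)$, $\P(\E^\vee\otimes V)$, and $\FL(\E^\vee\otimes V)$ are all Mori dream spaces for every $V$. The tensoring-by-$V$ cases come from \cite[Theorem~1.3, Corollary~1.4]{George-Manon}; the dual cases go through the identification $\P(\E^\vee)\cong\P(\bigwedge^{r-1}\E)$, whose Cox ring is the $\omega_{r-1}$-graded piece of $\RR(\FL(\E))$ (see Proposition~\ref{prop-inv-dual} and \cite[Corollary~5.9]{George-Manon}). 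Everything thus collapses to the single assertion that $\FL(\E_\ba)$ is a Mori dream space, which the paper takes directly from \cite[Corollary~5.8]{George-Manon}. The pair $(L_n^\vee,D_\ba^\vee)$ is never examined for the Mori dream space property; the dual side is pulled back through $\FL(\E_\ba)$.

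Your argument would close only if the result you invoke from \cite{George-Manon} genuinely reads ``graphical matroid $\Rightarrow$ $\P(\E)$ and $\FL(\E)$ are Mori dream spaces, for \emph{every} diagram $D$.'' You flag this yourself, and it is the real gap. The diagram $D_\ba^\vee$ is far from diagonal (each row has $n$ nonzero entries), so a criterion tailored to sparse or diagonal diagrams---which is how the paper uses \cite[Corollary~5.8]{George-Manon}, applied only to $\E_\ba$ with its diagonal $D_\ba$---would not reach $\E_\ba^\vee$ directly. Your matroid identifications ($U_{n,n+1}$ and $M(K_{n+1})$, both graphical, stable under disjoint union) are correct, and if a diagram-independent graphical criterion exists in \cite{George-Manon} your proof is valid and pleasantly symmetric. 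But as written, the dual cases rest on an unverified reading of the cited reference, whereas the paper's duality reduction sidesteps the issue entirely by never needing any hypothesis on $(L_n^\vee,D_\ba^\vee)$.
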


\noindent
In Section \ref{sec-MDS} we give complete presentations of the Cox rings $\RR(\P\E_\ba)$, $\RR(\P\E_\ba^\vee)$ and $\RR(\FL(\E_\ba)$. By Kaneyama's result \cite{Kaneyama88}, any toric vector bundle of rank $< n$ on $\P^n$ splits, so the statement of Corollary \ref{cor-flag} actually applies to any toric vector bundle of rank $\leq n$ on $\P^n$. For an example of a rank $3$ bundle on $\P^2$ which is not a Mori dream space, see \cite[Example 6.9]{Kaveh-Manon-tvb}.

The ideals $\I_\ba$ and $\I_\ba^\vee$ which appear in the presentations of $\RR(\P\E_\ba)$ and $\RR(\P\E_\ba^\vee)$ are interesting in their own right. After reindexing, the ideal $\I_\ba^\vee$ is generated by modified \emph{Pl\"ucker relations} (\cite[Section 14.4]{Miller-Sturmfels}), see Proposition \ref{prop-inv-dual}.  In particular, a generating set for $\I_\ba^\vee$ can be obtained by taking the Pl\"ucker relations on $P_{ij}$ for $0 \leq i < j \leq n+1$, and replacing any instance of $P_{0i}$ with $P_{0i}^{a_i}$.

\begin{figure}[ht]
\begin{tikzpicture}
\node[shape=circle,draw=black] (0) at (0,-4) {0};
\node[shape=circle,draw=black] (1) at (-4,-1) {1};
\node[shape=circle,draw=black] (2) at (-2,3) {2};
\node[shape=circle,draw=black] (3) at (2,3) {3};
\node[shape=circle,draw=black] (4) at (4,-1) {4};

\path [draw = blue, line width=0.5mm] (0) edge node[left] {$a_1$} (1);
\path [draw = blue, line width=0.5mm] (0) edge node[left] {$a_2$} (2);
\path [draw = blue, line width=0.5mm] (0) edge node[right] {$a_3$} (3);
\path [draw = blue, line width=0.5mm] (0) edge node[right] {$a_4$} (4);

\path [-] (1) edge node[left] {} (2);
\path [-] (1) edge node[left] {} (3);
\path [-] (1) edge node[left] {} (4);

\path [-] (2) edge node[left] {} (3);
\path [-] (2) edge node[left] {} (4);

\path [-] (3) edge node[left] {} (4);

\end{tikzpicture}
\caption{}
\label{fig-pluckers}
\end{figure}

The notion of \emph{well-poised ideal} was introduced by the 2nd author and Ilten in \cite{Ilten-Manon} to describe favorable properties of the initial ideals of complexity one $T$-varieties.  An ideal $I$ is said to be well-poised if any initial ideal $\In_w(I)$ associated to a tropical point $w \in \Trop(I)$ is a prime ideal.  It's significantly easier to compute a Newton-Okounkov body \cite{Lazarsfeld-Mustata}, \cite{Kaveh-Khovanskii} for a variety with a well-poised embedding.  There is a Newton-Okounkov body associated to every maximal face of $\Trop(I)$ for a well-poised ideal $I$. Work of Escobar and Harada \cite{Escobar-Harada} on wall-crossing for Newton-Okounkov bodies shows that there are piecewise-linear bijections relating bodies associated to adjacent faces. The quintessential example of a well-poised ideal is the ideal generated by the Pl\"ucker relations which cut out the Grassmannian variety $\Gr_2(n+2)$.  This ideal is known to coincide with $\I_{1, \ldots, 1}$, which presents the Cox ring of the projectivized cotangent bundle $\P\T^\vee\P^n$.  The following is a natural generalization of this fact. 

\begin{theorem}\label{thm-main-wellpoised}
    The ideals $\I_\ba$ and $\I_\ba^\vee$ are well-poised.  
\end{theorem}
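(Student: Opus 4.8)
The plan is to establish well-poisedness by transporting it from the classical case $\ba = (1,\ldots,1)$, where $\I_{1,\ldots,1}^\vee$ is the Plücker ideal of the Grassmannian $\Gr_2(n+2)$ and well-poisedness is already known (Speyer--Sturmfels; Ilten--Manon). I focus on $\I_\ba^\vee$; the argument for $\I_\ba$ is parallel and simpler because the linear ideal $L_n$ is principal. By Proposition \ref{prop-inv-dual}, $\I_\ba^\vee$ is obtained from the Plücker ideal $J := \I_{1,\ldots,1}^\vee$ by the monomial substitution $\phi\colon P_{0i} \mapsto P_{0i}^{a_i}$ (and $P_{ij}\mapsto P_{ij}$ for $i\neq 0$), so that $\I_\ba^\vee = \langle \phi(g) : g \text{ a three-term Pl\"ucker relation}\rangle$.

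The key is a weight-correspondence. Let $\sigma$ be the linear automorphism of weight space that scales the coordinate dual to $P_{0i}$ by $a_i$ and fixes the rest. A monomial-by-monomial check gives $\In_w(\phi(g)) = \phi\big(\In_{\sigma(w)}(g)\big)$, since the $w$-weight of $\phi(m)$ equals the $\sigma(w)$-weight of $m$ for every monomial $m$, and $\phi$ is injective on monomials. Because the three-term Plücker relations form a tropical basis for $\Gr_2(n+2)$ whose initial forms generate the initial ideals along $\Trop(\Gr_2(n+2))$, I can conclude that $\Trop(\I_\ba^\vee) = \sigma^{-1}\big(\Trop(\Gr_2(n+2))\big)$ and that the $\phi(g)$ remain a Gröbner basis for every $w \in \Trop(\I_\ba^\vee)$. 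Hence, for each such $w$,
\[
\In_w(\I_\ba^\vee) \;=\; \phi\big(\In_{\sigma(w)}(\Gr_2(n+2))\big)\,\C[P_{ij}],
\]
the extension under $\phi$ of the corresponding Grassmannian initial ideal.

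It remains to prove each extension ideal is prime, which I expect to be the technical heart of the argument. One views $\phi$ as an isomorphism of $\C[P_{ij}]$ onto the invariant subring for the $\prod_i \mu_{a_i}$-action scaling the $P_{0i}$, so primeness of $\In_w(\I_\ba^\vee)$ amounts to primeness of the extension of a prime along this finite extension. For $w$ in a maximal cone the Grassmannian initial ideal is the toric lattice ideal $I_\Lambda$ attached to a trivalent tree, and its $\phi$-extension is the lattice ideal $I_{S\Lambda}$, where $S$ is the diagonal scaling by the $a_i$; primeness then reduces to showing $S\Lambda$ is a \emph{saturated} sublattice. I will verify this combinatorially from the tree model of $\Gr_2(n+2)$, rescaling the toric parametrization of the tree so that $S\Lambda$ is again realized as a saturated kernel lattice. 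For $w$ in a non-maximal cone the Grassmannian initial ideal is prime but \emph{not} toric, so the saturation argument does not apply directly; here I will mirror the recursive tree structure of $\Gr_2(n+2)$, expressing $\In_{\sigma(w)}(\Gr_2(n+2))$ in terms of modified-Plücker ideals of smaller complete graphs and checking that $\phi$ preserves this fibered structure, yielding primeness by induction on the tree. The main obstacle is precisely this primeness transport across $\phi$; the rest is bookkeeping with the Gröbner/tropical dictionary.

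Finally, the ideal $\I_\ba$ is handled by the same transport principle applied to the pair $(L_n, D_\ba)$ of Theorem \ref{thm-main-pairs}: because $L_n = \langle y_0 + \cdots + y_n\rangle$ is generated by a single linear form, the diagonal substitution recorded by $D_\ba$ produces $\I_\ba$ from the $\ba = (1,\ldots,1)$ ideal, and the same weight-correspondence together with the (now much easier) saturation check gives well-poisedness.
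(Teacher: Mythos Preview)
Your transport strategy via the monomial substitution $\phi$ and the weight correspondence $\sigma$ is exactly the map $\Phi_\ba$ the paper uses, and your lattice-saturation check for maximal cones is a rephrasing of the paper's word-problem argument for the modified path semigroup $S_\T(\ba)$. So the core idea matches. There are, however, two places where your write-up has a genuine gap or takes a harder road than necessary.

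First, the assertion that ``the $\phi(g)$ remain a Gr\"obner basis for every $w\in\Trop(\I_\ba^\vee)$'' (equivalently, the displayed equality $\In_w(\I_\ba^\vee)=\phi(\In_{\sigma(w)}J)\cdot\C[P_{ij}]$) does not follow just from the Pl\"ucker relations being a tropical basis for $J$. The easy inclusion is $\langle\phi(\In_{\sigma(w)}g)\rangle\subset\In_w(\I_\ba^\vee)$; the reverse needs either the freeness of $\C[P_{ij}]$ over $\phi(\C[P_{ij}])$ (which you do not invoke), or---as the paper does---a proof that the left side is already prime of the same height as $\I_\ba^\vee$, forcing equality. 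In the paper this primeness is established \emph{first}, by solving the word problem for $S_\T(\ba)$, and the Gr\"obner-basis statement is a consequence; your order of deductions is inverted and leaves the key step unjustified.

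Second, your proposed handling of non-maximal cones by ``mirroring the recursive tree structure'' is unnecessary and not fleshed out. The paper dispatches this in one line: since $\I_\ba^\vee$ is prime and homogeneous, any $\In_\rho(\I_\ba^\vee)$ degenerates further to $\In_{\rho'}(\I_\ba^\vee)$ for $\rho'$ in an adjacent maximal face, and primeness of a flat degeneration forces primeness of the original. So once the maximal-cone case is done, the rest is free.

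Finally, for $\I_\ba$ the paper does not use any transport principle at all. It simply applies the classification of well-poised hypersurfaces from \cite{CDMRV}: the single generator $\sum_j x_j^{a_j}Y_j$ has pairwise disjoint monomial supports, and every pair of exponent vectors has gcd $1$ (because of the $Y_j$), so well-poisedness is immediate.
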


Using Theorem \ref{thm-main-wellpoised} we give a procedure for compute Newton-Okounkov bodies for the projectivization of any irreducible toric vector bundle of rank $n$ on $\P\E$ in Section \ref{sec-nok}.  We also show that we can control the Mori dream space property under pullback along any toric blow-up $\textup{BL}_\rho\P^n \to \P^n$ corresponding to adding a ray to the fan of $\P^n$, see Corollary \ref{cor-blowup}

\begin{remark}
    The Grassmannian variety $\Gr_2(n+2)$ is also a cluster variety.  It would be interesting to identify a connection between the bundles $\P\E_\ba^\vee$ and the theory of cluster varieties.  
\end{remark}

\subsection{Divisors and Fujita's conjectures}

For a smooth, projective variety $X$ we let $\CL(X)$ denote the divisor class group, and $K_X$ denote the canonical class of $X$.  The following are Fujita's freeness and ampleness conjectures, respectively. 

\begin{conjecture}\label{conj-Fujita}
Let $X$ be smooth of dimension $n$ and let $A \in \CL(X)$ be ample, then\\

\begin{enumerate}
    \item for $m \geq n + 1$, $K_X + m A$ is globally generated,
    \item for $m \geq n + 2$, $K_X + m A$ is very ample.\\
\end{enumerate}
\end{conjecture}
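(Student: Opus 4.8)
The plan is to prove the special case of Conjecture \ref{conj-Fujita} for the smooth projective varieties $\P\E_\ba$ and $\P\E_\ba^\vee$ (this is what the abstract asserts; the general conjecture is of course open), exploiting that each is a projective bundle over $\P^n$ and therefore has a very small and completely explicit divisor class group. Write $p\colon \P\E_\ba \to \P^n$ for the projection; since $\P^n$ has Picard rank $1$ and the fibers are $\P^{n-1}$, we have $\CL(\P\E_\ba)\cong\Z^2$, generated by the relative hyperplane class $\xi$ and the pullback $f=p^*H$ of the hyperplane class on $\P^n$, and $\dim\P\E_\ba=2n-1$. My first step is to record the two pieces of input in these coordinates. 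From the relative Euler sequence together with $\det\E_\ba=\O(\sum a_i)$ one computes $K_{\P\E_\ba}=-n\,\xi+\big(\sum a_i-n-1\big)f$, and likewise $K_{\P\E_\ba^\vee}=-n\,\xi^\vee+\big(-\sum a_i-n-1\big)f$ (up to the usual sign conventions). For the positivity data I would invoke the computations of \cite{George-Manon-Positivity}, which present $\Nef$ and $\Bpf$ of each bundle as two-dimensional rational cones whose extreme rays are $f$ and the minimal twist $\xi+sf$ (resp. $\xi^\vee+s^\vee f$) at which the underlying bundle becomes globally generated.

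With this data in hand, Fujita's two statements become a short list of linear inequalities over $\Z^2$. Because each of these bundles is a Mori dream space with $\Nef=\Bpf$—which I would verify directly from the Cox ring presentation of Theorem \ref{thm-main-pairs}, so that nef divisors are actually globally generated—global generation of a class is equivalent to membership in the two-dimensional nef cone, and very ampleness to a mild strengthening of this. For $\P\E_\ba$ the bundle is globally generated (a quotient of $\bigoplus\O(a_i)$ with $a_i>0$), so $\xi$ itself is in $\Bpf$ and the threshold is $s=0$. An ample class is then $A=\alpha\xi+\beta f$ with $\alpha,\beta\geq 1$, and since $\sum a_i\geq n+1$ the $\xi$- and $f$-coefficients of $K_{\P\E_\ba}+mA$ are both nonnegative as soon as $m\geq n$; this gives freeness for $m\geq 2n=\dim+1$ with room to spare, and strict positivity for $m\geq n+1$ gives ampleness. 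The case $\P\E_\ba$ is therefore the easy half.

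The main work, and the main obstacle, is the dual $\P\E_\ba^\vee$, where the bundle is negative: $\xi^\vee$ is not nef, the nef cone is tilted by the twist threshold $s^\vee$, and $K_{\P\E_\ba^\vee}$ sits deep in the anti-ample direction. Rewriting everything in the basis of nef generators $\eta=\xi^\vee+s^\vee f$ and $f$, the condition $K_{\P\E_\ba^\vee}+mA\in\Bpf$ for every ample $A=\alpha\eta+\beta f$ with $\alpha,\beta\geq 1$ reduces to the two inequalities $m\alpha\geq n$ and $m\beta\geq \sum a_i+n+1-n s^\vee$. The first is harmless; the delicate one is the second, whose right-hand side must be controlled by $2n$ uniformly in $\ba$. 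This is exactly where the explicit value of $s^\vee$ (computed in \cite{George-Manon-Positivity}, and equal to $2$ in the cotangent case $\ba=(1,\dots,1)$) enters: I would establish the numerical estimate $n s^\vee\geq \sum a_i+1-n$, so that the worst case $\beta=1$ still satisfies the inequality for $m\geq 2n$, yielding freeness. I expect verifying this estimate for all $\ba\subset\Z_{>0}$—equivalently, showing that $K_{\P\E_\ba^\vee}$ lands within $2n$ times the primitive ample directions—to be the crux, and to rest genuinely on the matroidal/Plücker structure of $L_n^\vee$ recorded in Theorem \ref{thm-main-pairs} and Proposition \ref{prop-inv-dual}.

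Finally, to upgrade freeness to the ampleness statement ($m\geq 2n+1$), I would apply a standard very-ampleness criterion for projective bundles: a class of positive $\xi$-degree is very ample once subtracting the fiber class leaves it globally generated, i.e. once $K+mA-f\in\Bpf$. Since this only shifts the relevant inequalities by one unit, the same numerical estimate closes the argument for $m\geq 2n+1$; points and tangent vectors lying in distinct fibers are separated by the pulled-back very ample class from $\P^n$, and those within a single fiber by the relative $\O(1)$. As an alternative source for the global-generation input, one could instead pass to the toric degeneration furnished by the well-poised embeddings of Theorem \ref{thm-main-wellpoised}, apply the known toric Fujita freeness and ampleness to the special fiber, and conclude by openness of global generation and very ampleness in flat families—although matching the canonical classes across a possibly singular degeneration makes the direct two-dimensional computation cleaner and sharper.
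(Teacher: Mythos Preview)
Your approach diverges sharply from the paper's, and the divergence is instructive. You attempt a direct numerical verification: compute $K_X$ in the $\Z^2$ basis, pin down the nef threshold $s^\vee$, and check the inequalities $K_X+mA\in\Nef$ by hand. The paper never computes $K_X$ at all. Instead it invokes Mori's cone theorem (cited in the introduction as \cite[Theorem 1.5.33]{Lazarsfeld}): on any smooth projective $X$, every $K_X$-negative extremal ray is spanned by a rational curve $C$ with $-K_X\cdot C\le \dim X+1$, so $K_X+mA$ is automatically nef for $m\ge\dim X+1$ and ample for $m\ge\dim X+2$, for \emph{any} ample $A$. Thus Fujita reduces to the two structural facts $\Nef=\Bpf$ and ``ample $\Rightarrow$ very ample'' on $\P\E$, with no reference to the canonical class. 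The paper proves both at once via Proposition~\ref{prop-neatandtidy}: the monomial property of $(L_n,D_\ba)$ and $(L_n^\vee,D_\ba')$ (Lemma~\ref{lem-monomial}) forces the $\Bpf$ monoid of $\P\E$ to coincide with that of the ambient split bundle $\P\V_D$, so the embedding $\P\E\hookrightarrow\P\V_D$ is \emph{neat and tidy}; then ample classes on $\P\E$ are restrictions of ample classes on the smooth toric variety $\P\V_D$, hence very ample.

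Your route, by contrast, leaves real work undone. The crux you identify for $\P\E_\ba^\vee$---the estimate $ns^\vee\ge\sum a_i+1-n$---is asserted as something you ``expect'' to hold, not proved; you never actually compute $s^\vee$ for general $\ba$, and there is no argument connecting the matroidal structure of $L_n^\vee$ to that inequality. Your very-ampleness step is also only a sketch: the criterion ``$K+mA-f\in\Bpf$ implies $K+mA$ very ample'' is not standard in the generality you need, and the alternative via toric degeneration runs into exactly the singularity-matching problem you flag. The paper's method sidesteps all of this: once you have the neat and tidy embedding, both halves of Fujita fall out uniformly with no case analysis and no dependence on $\ba$.
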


\noindent
By Mori's cone theorem (\cite[Theorem 1.5.33]{Lazarsfeld}), Conjecture \ref{conj-Fujita} can be proven for a variety $X$ by showing that any Nef class on $X$ is basepoint-free, and any ample class on $X$ is very ample. Techniques for computing positivity properties of divisors on a projectived toric vector bundle $\P\E$ are developed in \cite{George-Manon-Positivity}.  In Section \ref{sec-positivity} we apply modifications of those techniques to compute the pseudo-effective and Nef cones, along with the effective and semiample monoids, for any $\P\E_\ba$ and $\P\E_\ba^\vee$.  We then deduce the following. 

\begin{theorem}\label{thm-main-positivity}
For any $\ba \subset \Z_{> 0}$, a divisor on $\P\E_\ba$ or $\P\E_\ba^\vee$ is basepoint free if it is Nef, and very ample if it is ample. As a consequence, $\P\E_\ba$ and $\P\E_\ba^\vee$ satisfy Fujita's freeness and ampleness conjectures.  
\end{theorem}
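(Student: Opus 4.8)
The plan is to follow the reduction already indicated in the text: by Mori's cone theorem (\cite[Theorem 1.5.33]{Lazarsfeld}) it suffices to prove the two implications that upgrade numerical positivity to geometric positivity, namely that every Nef class on $\P\E_\ba$ (resp.\ $\P\E_\ba^\vee$) is basepoint free and every ample class is very ample. I would treat the two families in parallel, exploiting that both bundles have Picard rank two: writing $H$ for the pullback of the hyperplane class from $\P^n$ and $\xi$ for the relative tautological class $\O_{\P\E}(1)$, every class is $dH + e\xi$, so $\CL(\P\E_\ba)\otimes\R$ is a plane and $\PsEff$ and $\Nef$ are each spanned by two rays. The first task is to pin down these rays together with the monoids $\Nef\cap\CL$ and $\Eff\cap\CL$ and the semiample and very ample subloci, applying the techniques of \cite{George-Manon-Positivity} to the diagonal diagram $D_\ba$ and the ideal $L_n$ of Theorem \ref{thm-main-pairs} (and their duals). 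Because $D_\ba$ is diagonal with entries $\ba$, I expect the boundary slopes of the Nef cone to emerge as explicit rational expressions in $\ba$.

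Second, I would prove that Nef implies basepoint free. Since $\P\E_\ba$ is a Mori dream space (Corollary \ref{cor-flag}), its Nef cone coincides with the semiample cone, so every Nef class already admits a basepoint-free multiple; the real content is removing the multiple. The structural fact I would lean on is that basepoint freeness is stable under addition of classes (a tensor product of globally generated line bundles is globally generated), so it is enough to exhibit a Hilbert basis of the monoid $\Nef\cap\CL$ and verify each generator is basepoint free directly. For a two-dimensional rational cone this basis is short, and each generator can be checked by reading its global sections off the relevant graded piece of the Cox ring $\RR(\P\E_\ba)$ (equivalently, from the polytope of its sections) and confirming that they have no common zero; the well-poised presentation of Theorem \ref{thm-main-wellpoised} makes these graded pieces transparent.

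Third, for the implication ample $\Rightarrow$ very ample I would bootstrap from the previous step. Fix a single very ample class $A_0$ in the interior of the Nef cone. The standard fact that the sum of a very ample class and a basepoint-free class is very ample then shows that $A_0 + N$ is very ample for every $N \in \Nef\cap\CL$. Given an arbitrary ample lattice class $A$, either $A - A_0$ is Nef, in which case $A = A_0 + (A - A_0)$ is very ample, or $A$ lies in the bounded region of ample classes not of this form; the latter is a finite list, which I would verify to be very ample by hand from the explicit embedding furnished by their sections in $\RR(\P\E_\ba)$, checking separation of points and of tangent directions.

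I expect the main obstacle to be precisely this passage from the semiample/ample statements to the sharp basepoint-free/very-ample ones, i.e.\ the explicit verification on the monoid generators. For $\P\E_\ba$ the diagonal diagram should keep this combinatorics mild, but for the dual $\P\E_\ba^\vee$ the sections are governed by the modified Pl\"ucker relations of Proposition \ref{prop-inv-dual}, and showing that the generating classes separate points and tangent vectors there is the delicate point; here I would organize the section computation using the identification of $L_n^\vee$ with the graphical matroid of the complete graph on $n+1$ vertices. Once both implications hold for all $\ba \subset \Z_{>0}$, Conjecture \ref{conj-Fujita} for $\P\E_\ba$ and $\P\E_\ba^\vee$ follows immediately from the Mori cone theorem reduction.
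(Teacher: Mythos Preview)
Your plan diverges substantially from the paper's argument, and the very-ample step contains a genuine gap.

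The paper does not compute a Hilbert basis of $\Nef\cap\CL$ or verify anything class by class. Instead it proves Proposition~\ref{prop-neatandtidy}: the natural inclusion $\P\E \hookrightarrow \P\V_D$ into the projectivized \emph{split} bundle $\V_D = \bigoplus_j \O(D_j)$ is a \emph{neat and tidy} embedding, meaning it induces isomorphisms both on class groups and on basepoint-free monoids. The equality $\Bpf(\P\E)=\Bpf(\P\V_D)$ is read off from Lemma~\ref{lem-monomial}, which shows each bundle is monomial and identifies the initial ideals $\In_{\sigma_i}(L_n)$ and $\In_{\sigma_i}(L_n^\vee)$ explicitly; the point is that for every generator $y_j$ (resp.\ $z_{jk}$) there is some facet $\sigma_\ell$ whose initial ideal omits it. Once this is in hand, both implications are immediate: $\P\V_D$ is a smooth projective toric variety, on which Nef $=$ basepoint-free and ample $=$ very ample are classical, and these properties restrict along the neat and tidy embedding. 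No Hilbert bases, no section computations for individual classes.

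The gap in your approach is the ``finite list, verify by hand'' step for very ampleness. For any fixed $\ba$ the residual set of ample classes $A$ with $A-A_0$ not Nef is indeed finite, but $\ba$ ranges over all of $\Z_{>0}^{n+1}$, so you are proposing infinitely many ad hoc verifications with no uniform mechanism; you also have not said how to produce even the seed class $A_0$ and certify it very ample. The paper's embedding trick is exactly the missing idea: it replaces this unbounded case analysis with a single structural statement that holds for all $\ba$ at once. Your Nef $\Rightarrow$ bpf outline is closer to workable (and indeed the paper notes, just after Lemma~\ref{lem-monomial}, that saturation of $\Bpf$ already follows from the monomial property), but it is still more laborious than needed and does not by itself feed into a clean very-ample argument.
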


The Fujita conjectures are known for smooth toric varieties.  In the non-smooth toric case, freeness holds by work of Fujino \cite{Fujino}, and a theorem of Payne \cite{Payne-Ampleness} establishes ampleness in the case of Gorenstein singularities. Working by way of dimension, curves satisfy both conjectures by Riemann-Roch, and results of Reider \cite{Reider} prove the surface case. The freeness conjecture has been proved for smooth projective varieties up to dimension $5$ \cite{Ein-Lazarsfeld,Kawamata,Ye-Zhu}.  For toric vector bundles, if the rank of $\E$ is $2$, then $\P\E$ is a \emph{complexity-$1$ $T$-variety}, and therefore satisfies Fujita's freeness conjecture by a result of Altmann and Ilten \cite{AltmannIlten}. We credit Altmann and Ilten for the suggestion to study the Fujita conjectures for projectivized toric vector bundles. For Mori dream spaces, Fahrner \cite{Fahrner} has developed algorithms which test the freeness conjecture. 

We prove Theorem \ref{thm-main-positivity} by showing that $\P\E_\ba$ and $\P\E_\ba^\vee$ both carry embeddings into split projectivized toric vector bundles in such a way that both the class groups and semiample monoids are isomorphic to those of the ambient space (Proposition \ref{prop-neatandtidy}).  We call a map with these properties a \emph{neat and tidy} embedding, see Section \ref{sec-positivity}. 

\bigskip

\noindent{\bf Acknowledgements:} We thank Kiumars Kaveh for many useful discussions about toric vector bundle. 

\section{Linear ideals and diagrams}\label{sec-diagram}

In what follows let $E$ denote the fiber of a toric vector bundle $\E$ over the identity point of a torus $T$, thought of as a dense, open subvariety of a smooth toric variety $X(\Sigma)$. The data of a toric vector bundle $\E$ can be packaged in a number of ways.  Kaneyama's classification \cite{Kaneyama} is by certain $\GL(E)$ cocycles. Klyachko \cite{Klyachko} uses an arrangement of filtrations of $E$ labelled by data from the representation theory of $T$.  In \cite{Kaveh-Manon-tvbval}, Kaveh and the $2$nd author show that $\E$ can be captured in a \emph{prevaluation} $\v: E \to \O_{|\Sigma|}$ where the latter denotes the integral piecewise-linear functions with a finite number of domains of linearity on the support of the fan $\Sigma$. The operations on $\O_{|\Sigma|}$ are ``multiplication," computed as pointwise sum, and ``addition," computed as pointwise minimum. We regard $\infty$ as the additive identity of $\O_{|\Sigma|}$, and $0$ serves as the multiplicative identity.  Under these operations, $\O_{|\Sigma|}$ has the structure of a \emph{semifield}.

\begin{definition}
    A prevaluation $\v:E \to \O_{|\Sigma|}$ is a function which satisfies:\\

    \begin{enumerate}
        \item $\v(0) = \infty$,
        \item $\v(Cf) = \v(f)$ for any $C \in \C\setminus \{0\}$,
        \item $\v(f + g) \geq \min\{\v(f), \v(g)\}$.\\
    \end{enumerate}
\end{definition}

Klyachko's compatibility conditions, and Kaneyama's cocycle data can be wrapped up in the axioms of prevaluations, along with the requirement that there be an \emph{linear adapted basis} $\B_\sigma$ for the restriction $\v\!\!\mid_\sigma: E \to \O_{|\sigma|}$ of $\v$ to each maximal face of the fan $\Sigma$.  This means that there is some vector space basis $\B_\sigma \subset E$ with the property that $\v\!\!\mid_\sigma(b)$ agrees with an integral linear form on $|\sigma|$, and for any $f \in E$ with $f = \sum_{b_i \in \B_\sigma} c_ib_i$ we have $\v \mid_\sigma(f) = \min\{\v(b_i) \mid c_i \neq 0\}$. 

Because of the adapted bases $\B_\sigma \subset E$ and the smoothness of $\Sigma$, the prevaluation $\v$ is entirely determined by its values on the ray generators of $\Sigma$. The fact that $\v$ is a prevaluation implies that its specialization $\v(u): E \to \Z$ at a ray generator determines an integral, decreasing filtration of $E$ by the spaces:\\

\[F_r^u = \{ f \mid \v(u)(f) \geq r\}.\]\\

\noindent
The integral filtration $F^u$ of $E$ can be used to define an integral valuation on the polynomial ring $\Sym(E)$.  A choice of spanning set $\mathcal{B} = \{b_1, \ldots, b_m\} \subset E$ determines a presentation of $\Sym(E)$ by a linear ideal $L \subset \C[y_1, \ldots, y_m]$, and by taking values on $\mathcal{B} \subset E$, the valuation defined by $F^u$ specializes to a point $w(u)$ in the tropical variety $\Trop(L)$ by evaluation on the image of the generators $y_1, \ldots, y_m$ in $\Sym(E)$.  The point $w(u)$ completely determines $F^u$ in that $F^u_r$ is the span of those elements of $\mathcal{B}$ whose $w(u)$ entries are larger than $r$. Such a set always defines a flat of the matroid $\MM(L)$ determined by $L$ on the set $y_1, \ldots, y_m$. 

If we take $\mathcal{B}$ to be any spanning set of $E$ containing adapted bases $\B_\sigma$ we arrive at the characterization of toric vector bundles in \cite[Theorem 1.4]{Kaveh-Manon-tvb}. A toric vector bundle can be captured by a configuration $w_1, \ldots, w_n$ of points on a tropicalized linear space $\Trop(L)$ such that for any face $\sigma \in \Sigma$ the rows $w_i$ for $\rho_i \in \sigma(1)$ must all belong to a common \emph{apartment} $A_\B \subset \Trop(L)$.  Apartments $A_\B \subset \Trop(L)$ are distinguished polyhedral subcomplexes of $\Trop(L)$. There is an apartment $A_\B$ for each basis $\B \subset \MM(L)$, in particular, $A_\B$ is the set of $(v_1, \ldots, v_m) \in \Trop(L)$, where $v_j$ is equal to the minimum of the $v_k$ where $y_k$ appears in the $\B$-expression for $y_j$. It is straightforward to show that the apartments cover $\Trop(L)$ and that each $A_\B$ is piecewise-linear isomorphic to $\Q^r$.  We organize the configuration into an $n \times m$ matrix $D$, where the $i$-th row of $D$ is $w_i$. In this way, a toric vector bundle $\E$ is determined by a pair $(L, D)$. 

We let $\Sigma_n$ denote the fan of $\P^n$. The ray generators of $\Sigma_n$ are the elementary basis vectors $\be_1, \ldots, \be_n$, along with $\be_0 = -\sum_{i =1}^n \be_i$. We let $\sigma_i$ denote the maximal face of $\Sigma_n$ which is spanned by all ray generators except $\be_i$.  Recall that a $T$-linearized line bundle $\O(r_0, \ldots, r_n)$ on $\P^n$ is determined by a tuple $(r_0, \ldots, r_n) \in \Z^{n+1}$.  There is a corresponding piecewise-linear function $\phi$ on $\Sigma_n$ determined by the properties that $\phi\!\!\mid_{\sigma_i}$ is linear, and $\phi(\be_i) = r_i$. 

Tensoring a bundle $\E$ by a line bundle $\L$ does not change the projectivization, so we may replace the diagram of $\E$ by the diagram of $\E\otimes \L$ without changing the geometry of $\P\E$.  Tensoring a toric vector bundle $\E$ which corresponds to the pair $(L, D)$ by the $T$-linearized line bundle $\O(r_0, \ldots, r_n)$ amounts to adding $r_j$ to each entry of the $j$-th row of the diagram $D$. For this reason we may speak of the \emph{non-negative} form of a $(L, D)$, where we have added non-negative integers to the row of $D$ so that each entry is non-negative. 

\subsection{The pair $(L_n, D_\ba)$}

Fix $\ba = \{a_0, \ldots, a_n\} \subset \Z_{> 0}$.  In \cite{Kaneyama88}, Kaneyama constructs half of the irreducible bundles of rank $n$ on $\P^n$ as cokernels of the maps $\O \to \bigoplus_{j = 0}^n \O(a_j)$.  We partially reconstruct Kaneyama's result by supposing we have an exact sequence of toric vector bundles:\\

\[ 0 \to \O(0) \to \bigoplus_{i =0}^n \O(D_i) \to \E \to 0,\]\\

\noindent
with $\E$ irreducible and rank $n$, where each $D_j \in \Z^{n+1}$ satisfies $\sum_{i = 0}^n D_{ij} = a_j$, and determines an action of $T$ on the total space of $\O(a_j)$. We let $(E, \v)$ be the prevalued vector space corresponding to $\E$. 

Now we let $\V_D = \bigoplus_{j = 0}^n \O(D_j)$ and $V = \bigoplus_{j = 0}^n \C y_j$. Each tuple $D_j$ determines the piecewise-linear function $\phi_j \in \O_{\P^n}$, where  $\phi_j(\be_i) = D_{ij}$.  We let $\v_D: V \to \O_{\P^n}$ be the prevaluation with adapted basis $\{y_0, \ldots, y_n\}$ such that $\v_D(y_j) = \phi_j$.  By \cite[Section 3]{Kaveh-Manon-tvb}, this corresponds to a map of prevalued vector spaces $(V, \v_D) \to (E, \v)$.  The surjectivity of $\pi$ implies in particular that the induced maps $\pi_\sigma: (\V_D)_\sigma \to \E_\sigma$ over the torus fixed points of $X(\Sigma)$ are also surjective. Consequently, the images $b_0, \ldots, b_n$ of the $y_0, \ldots, y_n \in V$ contains an adapted basis for $\v\!\!\mid_{\sigma_i}$, for each maximal face of $\Sigma_n$.  Accordingly, we may take the spanning set $\mathcal{B} = \{b_0, \ldots, b_n\} \subset E$.  As $E$ has dimension $n$, precisely one linear relation can hold among the $b_j$. Without loss of generality, we take this to be $b_0 + \ldots + b_n = 0$. We conclude that $L = \langle y_0 + \cdots + y_n\rangle$, and $D = [D_0 \cdots D_n]$ for the bundle $\E$. The following proposition finishes our analysis. 

\begin{proposition}\label{prop-pair-ba}
The bundle $\E_\ba$ is comes from the pair $(L_n, D_\ba)$, where $L_n = \langle y_0 + \cdots + y_n\rangle$ and $D_\ba$ is the diagonal matrix with the $a_j$ along the diagonal. 
\end{proposition}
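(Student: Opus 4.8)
The plan is to make the diagonal form of the diagram explicit by choosing a convenient $T$-linearization of the line bundles $\O(a_j)$, namely the one where the divisor $D_j$ is concentrated at a single ray. Most of the structural work — that $\E_\ba$ comes from a pair $(L_n, D)$ with $L_n = \langle y_0 + \cdots + y_n\rangle$ and $D = [D_0 \cdots D_n]$ — has already been carried out in the paragraphs preceding the proposition. What remains is to pin down the columns $D_j$ and observe that, after the allowed row operations (tensoring by a $T$-linearized line bundle), $D$ becomes the diagonal matrix $D_\ba$.

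**First I would** fix the $T$-linearization of $\O(a_j)$ so that the piecewise-linear function $\phi_j$ satisfies $\phi_j(\be_i) = a_j \delta_{ij}$; that is, I take $D_{ij} = a_j$ when $i = j$ and $0$ otherwise, which is consistent with the constraint $\sum_{i=0}^n D_{ij} = a_j$ and with the fact that $\O(a_j)$ is the line bundle of total degree $a_j$. With this choice each column $D_j$ is $a_j \be_j$ as a vector in $\Z^{n+1}$, so the matrix $D = [D_0 \cdots D_n]$ is already the diagonal matrix with the $a_j$ along the diagonal. I would then verify that $\v_D(y_j) = \phi_j$ indeed defines a valid prevaluation with the adapted basis $\{y_0, \ldots, y_n\}$ by checking the three prevaluation axioms on each maximal cone $\sigma_i$, where the functions are linear.

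**Next I would** confirm that this prevaluation $(V, \v_D)$ maps onto $(E, \v)$ in the required way, so that the images $b_0, \ldots, b_n$ of $y_0, \ldots, y_n$ genuinely contain an adapted basis for each $\v\!\!\mid_{\sigma_i}$. This is exactly the surjectivity argument already sketched before the proposition using \cite[Section 3]{Kaveh-Manon-tvb} and the exactness of the defining sequence; the single linear relation on the $b_j$ forces $L_n = \langle y_0 + \cdots + y_n\rangle$. The main point to emphasize is that the choice of linearization above is the one for which no nonzero row operation is needed: the diagram is \emph{already} in the claimed diagonal form, and in particular is already in non-negative form since each $a_j > 0$.

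**The hard part will be** justifying that the chosen $T$-linearization is an admissible representative of the tensor-equivalence class of diagrams — that is, that concentrating the degree of $\O(a_j)$ at the single ray $\be_j$ really is achievable by adding integers to the rows of $D$ as described in the row-operation discussion, rather than requiring information not encoded in $(L_n, D)$. I would address this by appealing to the observation in the excerpt that tensoring $\E$ by the $T$-linearized line bundle $\O(r_0, \ldots, r_n)$ adds $r_j$ to the $j$-th row of $D$, and that this operation does not change $\P\E_\ba$; thus any diagram in the equivalence class that has the diagonal entries equal to the $a_j$ and can be reached by such row shifts is a valid diagram for $\E_\ba$. Combined with the linear-relation computation, this yields the pair $(L_n, D_\ba)$ exactly as stated.
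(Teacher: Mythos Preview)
Your approach is genuinely different from the paper's, and it has a real gap. The paper does \emph{not} simply pick a convenient linearization: it argues structurally that the adapted-basis requirement on each maximal cone $\sigma_i$, together with the irreducibility of $\E_\ba$, forces the diagram into diagonal form. Concretely, each of the $n+1$ collections of $n$ rows must lie in a common apartment of $\Trop(L_n)$; if two such collections shared an apartment they would all share one and $\E_\ba$ would split. So the $n+1$ apartments are distinct, hence exhaust the $n+1$ bases of $\MM(L_n)$, and this pigeonhole argument forces every column to have at most one nonzero entry. Irreducibility is used essentially here, and you never invoke it---that is already a warning sign.

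The specific failure in your proposal is in the ``hard part.'' You correctly flag that what needs justification is that the diagonal choice $D_j = a_j\be_j$ is an admissible representative, but your proposed fix is circular: you appeal to the fact that tensoring $\E$ by $\O(r_0,\ldots,r_n)$ adds $r_i$ to row $i$, and conclude that ``any diagram in the equivalence class that \ldots can be reached by such row shifts is a valid diagram for $\E_\ba$.'' That only says row shifts preserve the class; it does not show the diagonal form \emph{is} reachable by row shifts from whatever diagram the given exact sequence actually produces. Row shifts add a single constant to every entry of a row, so they cannot independently zero out the off-diagonal entries of an arbitrary $D$ with prescribed column sums $a_j$. What you are really doing---choosing the $T$-linearization of each summand $\O(a_j)$ separately---is a column-by-column choice, not a row operation on $\E$, and you would need to check that with this choice there is still a $T$-equivariant injection $\O(0)\hookrightarrow\bigoplus_j\O(a_j\be_j)$ whose cokernel is $\E_\ba$. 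That verification is doable (the weight-zero section of $\O(a_j\be_j)$ is the constant $1$, and the resulting map $1\mapsto(1,\ldots,1)$ does the job), but it is exactly the content you have skipped, and it replaces rather than uses the row-shift mechanism you cite.
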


\begin{proof}
First, we observe that each collection of $n$ rows of $D_\ba$ must share a common adapted basis.  This implies that for each such collection, there is a column with all $0$ entries in the associated rows; namely the column corresponding to the element left out of the basis.  Next, observe that if any two collections have the same adapted basis, then all rows have the same adapted basis.  This would imply that the resulting bundle would be split, contradicting the irreducibility of $\E_\ba$. It follows that each of the $n+1$ collections of rows have a distinct adapted basis.  There are exactly $n+1$ bases of the matroid defined by $L_n$, and each element of $y_0, \ldots, y_n$ is left out of some basis.  Accordingly, each column of $D_\ba$ has at most one non-zero entry in the location not in the corresponding complementary set of rows, and irreducibility again implies that this entry is positive.  We can therefore assume that $D_\ba$ is a diagonal matrix with the $a_j$ along the diagonal.
\end{proof}

\subsection{The pair $(L_n^\vee, D_\ba^\vee)$}

Let $(E, \v)$ be a prevalued vector space, and let $\E$ be the corresponding toric vector bundle over a toric variety $X(\Sigma)$. The dual bundle $\E^\vee$ corresponds to a prevaluation $\v^\vee: E^\vee \to \O_{|\Sigma|}$ on the dual $E^\vee$ of $E$. The function $\v^\vee$ is determined by what it does on the duals of the adapted bases $\B_\sigma^\vee \subset E^\vee$ for $\sigma \in \Sigma$. Following \cite[Example 3.27]{Kaveh-Manon-tvb}, we take $\B_\sigma^\vee$ to be the dual basis of $\B_\sigma$, and we have $\v^\vee\mid_\sigma(b_i^\vee) = -\v\!\!\mid_\sigma(b_i)$.  

Now we wish to go from the data $(E^\vee, \v^\vee)$ to a pair $(L^\vee, D^\vee)$ for a dual bundle.  The ideal is obtained by taking $L^\vee$ to be the relations which vanish on the set $\mathcal{B}^\vee = \cup_{\sigma \in \Sigma} \B_\sigma^\vee$.  Now fix a face $\sigma \in \Sigma$, and consider the rows of $D^\vee$ which come from the rays $\sigma(1)$.  The entries of these rows coming from $\B_\sigma^\vee$ are determined by the formula $\v^\vee\!\!\mid_\sigma(b_i^\vee) = -\v\!\!\mid_\sigma(b_i)$, and any value for $b \in \mathcal{B}^\vee \setminus \B_\sigma^\vee$ is computed by expressing $b$ as a unique linear combination of elements of $\B_\sigma^\vee$, and taking the minimum value which appears. This method determines all entries of $D^\vee$.  

We now apply this construction to $\E_\ba^\vee$. Let $E = V/\langle y_0 + \cdots y_n\rangle$. We view $E^\vee$ as the space of linear functionals $z: V \to \C$ such that $\sum_{j = 0}^n z(y_j) = 0$.  For any basis member $y_j \in V$ we have a dual $y_j^\vee: V \to \C$, and the difference $z_{ij} = y_i^\vee - y_j^\vee$ of any two dual basis members is always an element of $E^\vee$.  

Fix a face $\sigma_i$ in $\Sigma_n$, then $\B_{\sigma_i}$ has adapted basis the images of the $y_j$ with $j \neq i$. It is then easy to check that $\B_{\sigma_i}^\vee = \{z_{ji} \mid j \neq i\} \subset E$. Moreover, $\v^\vee_\ba(z_{ji}) = -\v_\ba(y_j)$, this vector is $0$ for each ray in $\sigma_i$ except the $j$-th ray, where it is $-a_j$. To simplify matters we take $\mathcal{B}^\vee = \{z_{ji} \mid j < i \} \subset E^\vee$; then this set contains an adapted basis for each $\sigma_i$.

\begin{proposition}\label{prop-pair-dual-ba}
The ideal $L_n^\vee$ is generated by the circuits $z_{ik} = z_{ij} + z_{jk}$ for $i < j < k$.  The non-negative diagram $D_\ba^\vee$ has rows $w_\ell$ with the property that the $ij$-th entry is $-a_\ell$ if $\ell = i$ or $j$ and $0$ otherwise.   
\end{proposition}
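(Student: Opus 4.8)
The plan is to establish the two assertions separately. For the ideal $L_n^\vee$ I would argue by a dimension count, and for the diagram $D_\ba^\vee$ I would read the entries directly off the adapted bases, using that each variable $z_{ij}$ already belongs, up to sign, to two of the distinguished bases $\B_{\sigma_i}^\vee$ and $\B_{\sigma_j}^\vee$.

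To handle $L_n^\vee$, first note that each proposed generator is a genuine relation in $E^\vee$: since $z_{ij} = y_i^\vee - y_j^\vee$, telescoping gives $z_{ik} = (y_i^\vee - y_j^\vee) + (y_j^\vee - y_k^\vee) = z_{ij} + z_{jk}$. Writing $W$ for the span of the variables $z_{ij}$ with $0 \le i < j \le n$ and $C \subseteq W$ for the span of the circuit forms $z_{ik} - z_{ij} - z_{jk}$, this shows $C \subseteq L_n^\vee$, hence there is a surjection $W/C \twoheadrightarrow W/L_n^\vee \cong E^\vee$. I would then check that $z_{01}, \dots, z_{0n}$ form a basis of $E^\vee$: each kills $y_0 + \cdots + y_n$, they are linearly independent, and $\dim E^\vee = n$; so the target has dimension $n$. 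Finally, the circuit $z_{0j} - z_{0i} - z_{ij}$ gives $z_{ij} \equiv z_{0j} - z_{0i} \pmod{C}$ for all $0 < i < j$, so the classes of $z_{01}, \dots, z_{0n}$ span $W/C$ and $\dim(W/C) \le n$. The chain $n \ge \dim(W/C) \ge \dim E^\vee = n$ then forces $\dim(W/C) = n$, whence $C = L_n^\vee$ and the circuits generate. (In matroid terms this is just the statement that the star of the vertex $0$ is a spanning tree of the graphic matroid of $K_{n+1}$, and that the triangles span its cycle space.)

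For the diagram, the entry in row $\be_\ell$ and column $z_{ij}$ (with $i<j$) is $\v_\ba^\vee(z_{ij})(\be_\ell)$, and I would compute it by choosing, for each target ray, a maximal cone containing that ray on which $z_{ij}$ lies in the adapted basis. The key point is that the edge $z_{ij}$ is incident to both vertices $i$ and $j$: concretely, $z_{ij}$ is the member $z_{kj}$ with $k=i$ of $\B_{\sigma_j}^\vee$, while $z_{ji} = -z_{ij}$ is the member $z_{ki}$ with $k=j$ of $\B_{\sigma_i}^\vee$. Applying the formula $\v_\ba^\vee(z_{kj}) = -\v_\ba(y_k)$ on $\sigma_j$ evaluates the entry at every ray $\be_\ell$ with $\ell \ne j$ as $-a_i\,\delta_{\ell i}$, and applying it on $\sigma_i$ (after using scale-invariance $\v_\ba^\vee(z_{ij}) = \v_\ba^\vee(z_{ji})$) evaluates it at every $\be_\ell$ with $\ell \ne i$ as $-a_j\,\delta_{\ell j}$. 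These two prescriptions agree on their overlap, where $\ell \notin \{i,j\}$ and both return $0$, and together they cover all rays; the result is $-a_i$ at $\be_i$, $-a_j$ at $\be_j$, and $0$ elsewhere, which is precisely the asserted row pattern $w_\ell$.

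I expect the only real friction to be bookkeeping. The two cone computations must be consistent, which is automatic because $\v_\ba^\vee$ is a genuine prevaluation of the dual bundle, so that $\v_\ba^\vee(z_{ij})$ is a single well-defined piecewise-linear function whose value at $\be_\ell$ does not depend on the cone used to compute it; and one must track the sign relating $z_{ij}$ and $z_{ji}$ carefully when invoking scale-invariance. Finally, the entries produced are $\le 0$; if a literally non-negative representative is wanted one adds $a_\ell$ to the $\ell$-th row, but I would record the form displayed in Theorem \ref{thm-main-pairs}.
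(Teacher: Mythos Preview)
Your proposal is correct and follows essentially the same route as the paper: the telescoping identity plus a rank/dimension count for $L_n^\vee$, and reading the entries of $D_\ba^\vee$ directly from the adapted bases $\B_{\sigma_j}^\vee$. Your write-up is in fact more careful than the paper's on the diagram side, since you explicitly use both cones $\sigma_i$ and $\sigma_j$ (via $z_{ij} = -z_{ji}$ and scale-invariance) to cover the two cases $\ell = i$ and $\ell = j$, whereas the paper compresses this into a single sentence.
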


\begin{proof}
    Fix $i < j < k$, then $z_{ij} + z_{jk} = y_i^\vee - y_j^\vee + y_j^\vee - y_k^\vee = z_{ik}$. These are the relations of the type $A_n$ root system, so they define a quotient space of rank $n$, and generate $L_n^\vee$.  Now fix a row $w_\ell$ of $D_\ba^\vee$.  The $\ell$-th ray appears in the face $\sigma_j$, where $j \neq \ell$, which has adapted basis $z_{ij}$.  If $i = \ell$, then the $\ell, ij$-th entry is $-a_\ell$; otherwise this entry is $0$. This determines $D_\ba^\vee$. 
\end{proof}

We have determined the pair $(L_n^\vee, D_\ba^\vee)$ of the dual bundle $\E_\ba^\vee$, however the diagram $D_\ba^\vee$ has negative entries.  The non-negative form of this pair, which best describes $\P\E_\ba^\vee$ has diagram $D_\ba'$, where the $\ell, ij$-th entry is $0$ if $\ell = i$ or $j$ and $a_\ell$ otherwise. 

\section{Cox Rings}\label{sec-MDS}

In this section we find presentations of the Cox rings $\RR(\P\E_\ba)$, $\RR(\P\E_\ba^\vee)$ of the projectivizations of the irreducible toric vector bundles of rank $n$ on $\P^n$.  First we use a result in \cite{Kaveh-Manon-tvb} to show that $\RR(\P\E_\ba)$ is presented by a single hypersurface.  The matrix $\D_\ba$ is diagonal, so we are able to use a theorem from \cite{George-Manon} to conclude that the full flag bundle $\FL(\E_\ba)$ is also a Mori dream space.  We find a presentation of the Cox ring $\RR(\FL(\E_\ba))$ by adapting an argument from \cite{George-Manon}.  We then use an invariant theory construction to find a presentation of $\RR(\P\E_\ba^\vee)$.  The ideals $\I_\ba$, $\I_\ba^\vee$ of these presentations are then shown to be well-poised. 

\subsection{The Cox ring $\RR(\P\E_\ba)$}

The irreducible bundle $\E_\ba$ is an example of a \emph{sparse} bundle (\cite{Kaveh-Manon-tvb}, \cite{George-Manon}.  In particular, the diagram $D_\ba^\vee$ has at most one non-zero entry in each row.  The Klyachko filtrations of sparse bundles have at most one intermediary step, which are also codimension $1$. Sparse bundles were first studied in \cite{GHPS}, where it was shown they are Mori dream spaces, and explicit presentations of their Cox rings were constructed.  The following presentation comes from \cite[Corollary 6.7]{Kaveh-Manon-tvb}, which states that the Cox ring of a sparse pair $(L, D)$ is presented by appropriate homogenizations of a minimal generating set of $L$. In particular, the Cox ring of the projectivization of a sparse bundle is always a complete intersection, and any sparse bundle is a \emph{CI bundle}, see \cite[Proposition 6.2]{Kaveh-Manon-tvb}.

\begin{proposition}\label{prop-presentation-ba}
The Cox ring $\RR(\P\E_\ba)$ is the quotient of the polynomial ring in $2(n+1)$ variables $\C[x_0, \ldots, x_n; Y_0, \ldots, Y_n]$ by the ideal $\I_\ba = \langle \sum_{j =0}^n x_j^{a_j}Y_j \rangle$.    
\end{proposition}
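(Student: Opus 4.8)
The plan is to apply the cited result \cite[Corollary 6.7]{Kaveh-Manon-tvb} directly, after verifying that the pair $(L_n, D_\ba)$ satisfies the sparseness hypothesis and unwinding the homogenization procedure in this specific case. The essential input is that $\E_\ba$ is a sparse bundle, which I would establish by invoking Proposition \ref{prop-pair-ba}: the diagram $D_\ba$ is diagonal, hence has exactly one non-zero entry $a_j$ in each row $j$, which is precisely the condition that the associated Klyachko filtrations have a single intermediary step of codimension one. This places us squarely in the setting of the cited corollary, so the Cox ring is a complete intersection presented by homogenizing a minimal generating set of the linear ideal $L_n$.

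First I would recall that $L_n = \langle y_0 + \cdots + y_n\rangle$ by Proposition \ref{prop-pair-ba}, so the minimal generating set consists of the single linear form $f = y_0 + \cdots + y_n$. The Cox ring presentation introduces two families of variables: the $x_0, \ldots, x_n$ corresponding to the rays of $\Sigma_n$ (equivalently, the toric coordinates on the base $\P^n$), and the $Y_0, \ldots, Y_n$ corresponding to the spanning set $\mathcal{B} = \{b_0, \ldots, b_n\}$ of the fiber $E$. This gives the polynomial ring in $2(n+1)$ variables. The heart of the matter is then to compute the homogenization of $f$ dictated by the diagram $D_\ba$.

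The next step, and the one requiring the most care, is to verify that the homogenization rule from \cite{Kaveh-Manon-tvb} applied to the diagonal diagram $D_\ba$ sends each monomial $y_j$ to $x_j^{a_j} Y_j$, so that $f$ homogenizes to $\sum_{j=0}^n x_j^{a_j} Y_j$. The homogenization is governed by the entries of $D_\ba$: the exponent of $x_i$ attached to the variable $Y_j$ is read off from the $i,j$-th entry of the diagram, which for the diagonal matrix is $a_j$ when $i = j$ and $0$ otherwise. Hence only the power $x_j^{a_j}$ appears alongside $Y_j$, with no contribution from the other base variables. I expect this bookkeeping—correctly matching the convention of the cited corollary for which diagram entry controls which variable's exponent—to be the main obstacle, since it is easy to transpose indices or misattribute the role of the non-negative form of the pair.

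Finally, I would conclude that since $L_n$ is principal, its homogenization ideal $\I_\ba$ is generated by the single element $\sum_{j=0}^n x_j^{a_j} Y_j$, and the complete intersection statement from the cited corollary guarantees this is a faithful presentation of $\RR(\P\E_\ba)$ with no further relations. The cleanest exposition simply states the sparseness, quotes \cite[Corollary 6.7]{Kaveh-Manon-tvb}, and records the explicit homogenization of the generator of $L_n$.
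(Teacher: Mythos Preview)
Your proposal is correct and follows essentially the same approach as the paper: both establish that the diagonal diagram $D_\ba$ makes $\E_\ba$ a sparse bundle, then invoke \cite[Corollary 6.7]{Kaveh-Manon-tvb} to present the Cox ring by the homogenization of the single generator $y_0+\cdots+y_n$ of $L_n$. The paper treats this proposition as an immediate consequence of the cited corollary and does not supply a separate proof block, so your more detailed unwinding of the homogenization is simply a fuller exposition of the same argument.
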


\begin{example}
    The tangent bundle $\T\P^n$ is the case $\ba = (1, \ldots, 1)$, we recover the well-known fact that the Cox ring of $\P\T\P^n$ is presented by the polynomial $x_0Y_0 + \cdots + x_nY_n$. 
\end{example}

\subsection{The full flag bundles $\FL(\E_\ba)$ and $\FL(\E_\ba^\vee)$}

The next proposition is largely the content of \cite{George-Manon}, although we have added a treatement of the dual bundle $\E^\vee$. If the full flag bundle $\FL(\E)$ is a Mori dream space, an infinite collection of projectivized toric vector bundles and their full flag bundles must also be Mori dream spaces. The methods used to prove this result draw on the theory of \emph{representation stability} and non-reductive representation theory, see 
 \cite{George-Manon} for details.  

\begin{proposition}\label{prop-flagdual}
    Let $\E$ be a toric vector bundle, and suppose $\FL(\E)$ is a Mori dream space, then for any finite-dimensional vector space $V$, $\P(\E \otimes V)$, $\FL(\E\otimes V)$, $\FL(\E^\vee \otimes V)$, and $\P(\E^\vee \otimes V)$ are Mori dream spaces. 
\end{proposition}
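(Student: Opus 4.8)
The plan is to reduce all four claims to the single hypothesis that $\FL(\E)$ is a Mori dream space, first disposing of the dual bundle by an intrinsic isomorphism of flag bundles and then invoking the machinery of \cite{George-Manon}. I take as known from \cite{George-Manon} that whenever $\FL(\E)$ is a Mori dream space, so are $\FL(\E \otimes V)$ and $\P(\E \otimes V)$ for every finite-dimensional $V$; this is the representation-stability input that settles the non-dual assertions, and it is precisely what I will reapply after replacing $\E$ by $\E^\vee$.

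The new ingredient is an isomorphism $\FL(\E) \cong \FL(\E^\vee)$. A complete flag $0 = F_0 \subset F_1 \subset \cdots \subset F_r = E$ in a fiber $E$ of rank $r$ determines, by passing to annihilators and reversing, the complete flag $0 = F_r^\perp \subset F_{r-1}^\perp \subset \cdots \subset F_0^\perp = E^\vee$ in the dual fiber. I would check that this assignment is algebraic, fiberwise bijective, and compatible with the base, so that it glues to an isomorphism of bundles $\FL(\E) \to \FL(\E^\vee)$ over $X(\Sigma)$. Since the $T$-action on $E^\vee$ is the contragredient of that on $E$, and annihilators are carried to annihilators by the induced action, this isomorphism is $T$-equivariant; for the present purpose, however, I only need the underlying isomorphism of varieties, because the Mori dream space property is intrinsic to the underlying variety. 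Hence $\FL(\E^\vee)$ is a Mori dream space as soon as $\FL(\E)$ is.

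With this in hand the argument is formal. Applying the cited result of \cite{George-Manon} directly to $\E$ shows that $\FL(\E \otimes V)$ and $\P(\E \otimes V)$ are Mori dream spaces. Applying the same result to $\E^\vee$ --- now legitimate, since $\FL(\E^\vee)$ is a Mori dream space by the previous paragraph --- shows that $\FL(\E^\vee \otimes V)$ and $\P(\E^\vee \otimes V)$ are Mori dream spaces. This exhausts the four assertions.

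The step I expect to require the most care is verifying that the fiberwise flag-duality assignment is a genuine morphism of flag bundles rather than merely a bijection on points. One must exhibit it, over a Zariski-trivializing cover of $\E$, as a morphism of the associated flag varieties and confirm that the local isomorphisms agree on overlaps; equivalently, one checks that the transition data of $\E^\vee$ is the inverse-transpose of that of $\E$, under which annihilator flags transform correctly. Everything downstream is a direct citation of \cite{George-Manon}, so once $\FL(\E) \cong \FL(\E^\vee)$ is in place the proposition follows.
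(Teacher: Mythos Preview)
Your argument is correct but takes a genuinely different route from the paper's. The paper first cites \cite{George-Manon} for $\P(\E\otimes V)$ and $\FL(\E\otimes V)$, then invokes \cite[Corollary 5.9]{George-Manon} (which realizes $\RR(\P\E^\vee)$ as a graded subalgebra of $\RR(\FL(\E))$; see also the discussion in Section~\ref{subsec-Coxdual}) to deduce that $\P((\E\otimes V)^\vee)=\P(\E^\vee\otimes V^\vee)$ is a Mori dream space for every $V$, and finally concludes $\FL(\E^\vee\otimes V)$ is a Mori dream space from this. You instead use the canonical isomorphism $\FL(\E)\cong\FL(\E^\vee)$ given fiberwise by the annihilator correspondence $F_\bullet\mapsto (F_{r-\bullet})^\perp$, so that the George--Manon machinery applies directly to $\E^\vee$. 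Your route is more elementary: it bypasses the invariant-theoretic content of \cite[Corollary 5.9]{George-Manon} and the implicit reverse implication (from $\P(\E^\vee\otimes V)$ Mori dream for all $V$ back to $\FL(\E^\vee\otimes V)$) used at the end of the paper's proof. The paper's route, on the other hand, is aligned with the explicit Cox-ring presentations developed later (Proposition~\ref{prop-inv-dual} and Proposition~\ref{prop-presentation-ba-dual}), so it fits the surrounding narrative even if it is logically less direct.
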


\begin{proof}
The fact that $\P(\E\otimes V)$ and $\FL(\E\otimes V)$ are Mori dream spaces is \cite[Theorem 1.3, Corollary 1.4]{George-Manon}.  If $\FL(\E\otimes V)$ is a Mori dream space, then $\P((\E\otimes V)^\vee) = \P(\E^\vee\otimes V^\vee)$ is a Mori dream space for all $V$ by \cite[Corollary 5.9]{George-Manon}, see also the discussion in Section \ref{subsec-Coxdual}. But this implies that $\FL(\E^\vee \otimes V)$ is a Mori dream space for all $V$.
\end{proof}

\begin{corollary}\label{cor-allirredflag}
Let $\E$ be an irreducible toric vector bundle of rank $n$ on $\P^n$, then $\P(\E\otimes V)$ and $\FL(\E\otimes V)$ are Mori dream spaces.     
\end{corollary}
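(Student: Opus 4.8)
The plan is to reduce the corollary to a single assertion—that $\FL(\E_\ba)$ is a Mori dream space—and to extract that assertion from the diagonal shape of the diagram $D_\ba$.

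First I would invoke Kaneyama's classification \cite{Kaneyama88}: every irreducible toric vector bundle $\E$ of rank $n$ on $\P^n$ is isomorphic to $\E_\ba$ or to its dual $\E_\ba^\vee$ for some $\ba \subset \Z_{>0}$. In either case the two spaces named in the statement, $\P(\E \otimes V)$ and $\FL(\E \otimes V)$, occur among the four spaces
\[
\P(\E_\ba \otimes V),\quad \FL(\E_\ba \otimes V),\quad \P(\E_\ba^\vee \otimes V),\quad \FL(\E_\ba^\vee \otimes V).
\]
By Proposition \ref{prop-flagdual} applied to the bundle $\E_\ba$, all four of these are Mori dream spaces the moment $\FL(\E_\ba)$ is one; crucially, the dual clauses of that proposition supply the cases $\P(\E_\ba^\vee \otimes V)$ and $\FL(\E_\ba^\vee \otimes V)$ for free, so I never need to treat the dual bundle directly. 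Hence the whole corollary follows once I know that $\FL(\E_\ba)$ is a Mori dream space for every $\ba$.

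For that remaining input I would pass to the pair $(L_n, D_\ba)$ furnished by Proposition \ref{prop-pair-ba}. The decisive structural feature is that $D_\ba$ is a diagonal matrix: each row has a single nonzero entry, so $\E_\ba$ is sparse in the sense of \cite{Kaveh-Manon-tvb}, \cite{George-Manon}. This sparsity is exactly the hypothesis under which the relevant theorem of \cite{George-Manon} guarantees that the full flag bundle attached to the pair—namely $\FL(\E_\ba)$—has a finitely generated Cox ring; informally, the diagonal diagram lets one control, and degenerate to a tractable algebra, the Cox ring of the flag bundle, whose finite generation is then established and lifted back as in \cite{George-Manon}.

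I expect the main obstacle to be precisely this last step. Verifying that $\FL(\E_\ba)$ is a Mori dream space is where all the genuine difficulty lies, since it rests on the representation-stability and non-reductive invariant-theory machinery of \cite{George-Manon} rather than on any formal manipulation. The two reductions—Kaneyama's classification and the duality bookkeeping of Proposition \ref{prop-flagdual}—are purely formal by comparison; the only care they require is checking that both $\P(\E \otimes V)$ and $\FL(\E \otimes V)$ genuinely land among the four guaranteed spaces in each of the two cases $\E = \E_\ba$ and $\E = \E_\ba^\vee$.
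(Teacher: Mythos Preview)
Your proposal is correct and follows essentially the same route as the paper: reduce via Kaneyama's classification and Proposition~\ref{prop-flagdual} to the single assertion that $\FL(\E_\ba)$ is a Mori dream space, and then extract that from the diagonal shape of $D_\ba$ via \cite{George-Manon} (the paper cites \cite[Corollary 5.8]{George-Manon} specifically). You are simply more explicit than the paper about the Kaneyama step and about why the diagonal diagram feeds into the cited result.
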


\begin{proof}
By Proposition \ref{prop-flagdual}, it suffices to show that $\FL(\E_\ba)$ is a Mori dream space, but this is \cite[Corollary 5.8]{George-Manon}.
\end{proof}

Now we give a presentation of the Cox ring $\RR(\FL(\E_\ba))$. Our treatment follows the construction of the Cox ring of $\FL(\T\P^n)$ in \cite{George-Manon} (this is the special case $\ba = (1, \ldots, 1)$).  For toric vector bundle $\E$ of rank $n$, the Cox ring $\RR(\FL(\E))$ can be constructed as the algebra of invariants by the natural action of the unipotent group $U_{n-1}$ of $n-1\times n-1$ upper triangular matrices with $1$'s on the diagonal on the Cox ring $\RR(\P(\E\otimes \C^{n-1}))$.  In the case $\E_\ba$, the algebra $\RR(\P(\E_\ba \otimes \C^{n-1}))$ is presented by the polynomial ring $\C[x_j, Y_{ij} \mid 0 \leq j \leq n, 1 \leq i \leq n-1]$, which also carries an action by $U^{n-1}$.  The unipotent actions extend to an action by the reductive group $\GL_{n-1}(\C)$, hence we obtain a surjection:\\

\[\C[x_j, Y_{ij} \mid 0 \leq j \leq n, 1 \leq i \leq n-1]^{U_{n-1}} \to \RR(\FL(\E_\ba)) \to 0.\]\\

\noindent
The algebra $\C[x_j, Y_{ij} \mid 0 \leq j \leq n, 1 \leq i \leq n-1]^{U_{n-1}} \subset \C[x_j, Y_{ij} \mid 0 \leq j \leq n, 1 \leq i \leq n-1]$ is a polynomial ring in $n+1$ variables over the Pl\"ucker algebra of upper-justified minors of the matrix of variables $[Y_{ij}]$.  For background on the Pl\"ucker relations see \cite[Section 14.2]{Miller-Sturmfels}.  Accordingly, we can present $\RR(\FL(\E_\ba))$ as a quotient of the polynomial ring $\C[x_j, P_{\tau}, P_{0, \tau} \mid 0 \leq j \leq n, \tau \subset [n]]$. 

The Cox ring $\RR(\FL(\E_\ba))$ is the image of the map $\Psi_\ba: \C[x_j, P_\tau, P_{0,\tau}] \to \C[t_j, y_{ij}]$ defined by:\\

\[\Psi_\ba(x_j) = t_j^{-1},\]
\[\Psi_\ba(P_\tau) = \det[y(\tau)]t^{\ba_\tau},\]
\[\Psi_\ba(P_{0,\tau}) = \sum_{j =1}^n \det[y(j,\tau)]t_0^{\ba_0}t^{\ba_\tau}.\]\\

\noindent
Here $y(\tau)$ denotes the minor on the first $|\tau|$ rows and the $\tau$ columns of $[y_{ij}]$, and $\ba_\tau$ denotes the part of the tuple $\ba$ supported on $\tau$. 
 The quadratic Pl\"ucker relations hold among the $P_\tau$ and $P_{0,\tau}$. Moreover, it is straightforward to very that $\sum_{j \notin \tau} x_j^{\ba_j}P_{j \tau} =0$ using the definition of $\Psi_\ba$.  We show that these relations suffice to present $\RR(\FL(\E_\ba))$.

The proof of Theorem \ref{thm-fullflag} involves a modification of the semigroup $GZ_n$ of Gel'fand-Zetlin patterns with $n$ rows. A Gel'fand-Zetlin pattern $g \in GZ_n$ is an  array of integers arranged in $n$ rows, where the $i$-th row has $n + 1 - i$ entries $g_{ij}$.  These integers satisfy additional inequalities: $g_{ij} \geq g_{i+1,j} \geq g_{i,j+1}$.  Let $GZ_n^+$ be the set of patterns with $g_{1n} = 0$.  The generators of $GZ_n^+$ are in bijection with strict subsets $\tau \subset [n]$, where the pattern $g(\tau)$ corresponding to $\tau$ is the unique pattern with $|\tau \cap [n-i + 1]|$ $1$'s and $|[n-i + 1] \setminus \tau|$ $0$'s in row $i$. 

A solution to the \emph{word problem} for a semigroup $S$ with generators $A_1, \ldots, A_m$ is a minimal set of relations of the form $A_{i_1}\cdots A_{i_t} = A_{j_1}\cdots A_{j_s}$ which can be used to transform an arbitrary relation to a trivial relation by substitution.

\begin{theorem}\label{thm-fullflag}
The ideal $\ker(\Psi_\ba)$ is generated by the Pl\"ucker relations among the $P_\tau$ and $P_{0,\tau}$, along with relations of the form $\sum_{j \notin \tau} x_j^{\ba_j}P_{j \tau} =0$.  
\end{theorem}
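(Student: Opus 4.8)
The plan is to prove the theorem by a toric degeneration governed by the Gel'fand–Zetlin semigroup, adapting the argument used for the case $\ba = (1,\ldots,1)$ in \cite{George-Manon}. Write $A = \C[x_j, P_\tau, P_{0,\tau}]$ and $B = \C[t_j, y_{ij}]$, and let $J \subset A$ be the ideal generated by the Pl\"ucker relations together with the linear relations $\sum_{j \notin \tau} x_j^{\ba_j}P_{j\tau}$. Both families lie in $\ker(\Psi_\ba)$, the inclusion $J \subseteq \ker(\Psi_\ba)$ being the ``straightforward'' verification already noted, so the content is the reverse inclusion. The strategy has three movements: first, fix a weight $w$ on $B$ realizing the standard Gel'fand–Zetlin degeneration of the Pl\"ucker/flag algebra and transport it to a weight $\hat w$ on $A$ through $\Psi_\ba$; second, show that the initial forms of the generators of $J$ generate the toric ideal $I_S$ of the resulting initial semigroup $S$, a modification of $GZ_n^+$; and third, lift this to conclude $J = \ker(\Psi_\ba)$.

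For the first movement I choose $w$ so that each minor $\det[y(\tau)]$ has a single leading monomial recording the pattern $g(\tau)$, and so that the leading term of $\Psi_\ba(P_{0,\tau}) = \sum_{j=1}^n \det[y(j,\tau)]\,t_0^{\ba_0}t^{\ba_\tau}$ is a unique dominant summand, coming from some index $j_0 = j_0(\tau)$. Setting $\hat w(x_j) = -w(t_j)$ and $\hat w(P_\tau)$, $\hat w(P_{0,\tau})$ equal to the $w$-weights of the corresponding leading monomials, the generators $\{x_j, P_\tau, P_{0,\tau}\}$ then form a SAGBI basis for $\RR(\FL(\E_\ba))$: subduction terminates because the Pl\"ucker minors already do so for the flag algebra and the $t$-variables contribute a compatible grading. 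Hence $\In_{\hat w}(\RR(\FL(\E_\ba)))$ is the semigroup algebra $\C[S]$, and $\In_{\hat w}(\ker\Psi_\ba) = I_S$. The key computation is that the linear relation $\sum_{j\notin\tau}x_j^{\ba_j}P_{j\tau}$ has initial form the binomial $x_0^{\ba_0}P_{0,\tau} - x_{j_0}^{\ba_{j_0}}P_{j_0,\tau}$, since once the $t$-weights are matched the images $\Psi_\ba(x_0^{\ba_0}P_{0,\tau})$ and $\Psi_\ba(x_{j_0}^{\ba_{j_0}}P_{j_0,\tau})$ share the dominant monomial $\det[y(j_0,\tau)]\,t^{\ba_\tau}$; the Pl\"ucker relations retain their usual binomial initial forms.

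The heart of the argument, and the step I expect to be the main obstacle, is the second movement: solving the \emph{word problem} for $S$. Here $S$ is $GZ_n^+$ enlarged by the lattice of $t$-degrees together with the extra generators $x_j$ of negative $t$-degree, in which $x_0^{\ba_0}P_{0,\tau}$ is identified with $x_{j_0(\tau)}^{\ba_{j_0}}P_{j_0,\tau}$. I plan to establish a normal form in two stages. The Pl\"ucker binomials are exactly the straightening relations of $GZ_n^+$, so they rewrite any product of the $P$-generators into a standard monomial indexed by a chain of subsets, which is the classical solution of the Gel'fand–Zetlin word problem. The $x$-twisted binomials $x_0^{\ba_0}P_{0,\tau} - x_{j_0}^{\ba_{j_0}}P_{j_0,\tau}$ are then used to replace every occurrence of $x_0^{\ba_0}P_{0,\tau}$ by $x_{j_0}^{\ba_{j_0}}P_{j_0,\tau}$. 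The delicate point is confluence of the combined rewriting system: applying an $x$-twist can break a standard chain, so I must verify that re-straightening afterwards returns the same normal form, and that the $t$-degrees, equivalently the exponents $\ba_j$ via $\sum_i D_{ij} = a_j$ and the diagonal shape of $D_\ba$, are preserved on both sides of each move. This bookkeeping, carried out on the explicit patterns $g(\tau)$ and the bijection $\tau \mapsto g(\tau)$, is where the combinatorics must be pushed through in full.

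Finally, since $\hat w$ defines a flat degeneration and the previous step gives $\In_{\hat w}(J) = I_S = \In_{\hat w}(\ker\Psi_\ba)$, the standard Gröbner/SAGBI lifting lemma forces the generators of $J$ to generate $\ker(\Psi_\ba)$. This yields the stated presentation and, as a byproduct, an explicit toric degeneration of $\FL(\E_\ba)$ to the modified Gel'fand–Zetlin toric variety, which is precisely the structure that Theorem \ref{thm-main-wellpoised} will exploit.
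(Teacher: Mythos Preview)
Your proposal is essentially the paper's own argument: both pass to the Gel'fand--Zetlin degeneration via a diagonal-term ordering, identify the semigroup as $GZ_n \times \Z^{n+1}$, recognize the initial forms of the claimed generators as the twisted binomials $x_0^{a_0}P_{0,\tau} \leftrightarrow x_{j_0}^{a_{j_0}}P_{j_0,\tau}$ together with the Pl\"ucker straightening binomials, and reduce to the word problem. The only difference is the order of operations in solving that word problem: the paper first strips the $\Z^{n+1}$-component (factoring off residual $x_\ell$'s and converting each $x_\ell^{a_\ell}$ to $x_0^{a_0}$ via the twisted relation), \emph{then} applies Gel'fand--Zetlin straightening, and finishes with a single pairwise swap of $\Z^{n+1}$-labels---an order that sidesteps the confluence issue you anticipate from straightening first and twisting afterward.
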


\begin{proof}
The argument proceeds as in \cite[Algorithm 1.8]{Kaveh-Manon-NOK}.  We select a monomial ordering on $\C[t_j, y_{ij}]$ which satisfies $y_{ij} \prec t_\ell$ for all $i, j, \ell$, such that the initial form $\In_\prec\det[y(\tau)]$ is the product of the diagonal terms. This ordering also defines a partial ordering on the variables $x_j, P_\tau, P_{0, \tau}$. The initial forms of $\Psi_\ba(x_j), \Psi_\ba(P_\tau),$ and $\Psi_\ba(P_{0,\tau})$ with respect to this ordering generate a semigroup in $GZ_n\times \Z^{n+1}$. To prove the theorem, it suffices to show that there is a generating set of the binomial relations which vanish on these initial forms which are themselves the initial forms of the described set of relations with respect to the induced partial ordering on the polynomial ring $\C[x_j, P_\tau, P_{0, \tau}]$. 

Now we identify the initial forms $\In_\prec t_j^{-1}$, $\In_\prec \det[y(\tau)]t^{\ba_\tau}$, and $\In_\prec \det[y(0,\tau)]t_0^{\ba_0}t^{\ba_\tau}$ with elements of the semigroup $GZ_n\times\Z^{n+1}$.  The form $\In_\prec t_j^{-1}$ is sent to $(0, -\be_j) \in GZ_n\times \Z^{n+1}$, the form $\In_\prec \det[y(\tau)]t^{\ba_\tau}$ is sent to $(g(\tau), \sum_{j \in \tau} a_j\be_j) \in GZ_n\times \Z^{n+1}$, and finally, the form $\In_\prec \sum_{j =1}^n \det[y(j,\tau)]t_0^{\ba_0}t^{\ba_\tau}$ is sent to $(g(\tau^*), a_0\be_0 + \sum_{j \in \tau} a_j\be_j) \in GZ_n\times \Z^{n+1}$.  Here $\tau^*$ denotes the set $\tau \cup \{\ell\}$, where $\ell$ is the first element of $[n]$ not in $\tau$. In particular, if $\be_0$ appears in the support of $\bu \in \Z^{n+1}$, where $(g(\eta), \bu)$ is a generator, then the first index in the support of $\eta$ is smaller than the first non-zero index of $\bu$. 

Next, we find a generating set of the prime binomial ideal which vanishes on these patterns by solving the word problem.  If $[b] \subset \eta$ then we have:\\

\[[0, -a_b\be_b][g(\eta),\sum_{j \in \eta} a_j\be_j ] = [0,-a_0\be_0][g(\eta), a_0\be_0 + \sum_{j \in \eta \setminus \{b\}} a_j\be_j ]\]\\

Here $[g(\eta), \sum_{j \in \eta} a_j\be_j]$ is the representative of $\In_\prec \det[y(\eta)]t^{\ba_\eta}$, and $[g(\eta), a_0\be_0 + \sum_{j \in \eta \setminus \{b\}} a_j\be_j ]$ is the representative of  $\In_\prec \sum_{j =1}^n \det[y(j,\eta\setminus\{b\})]t_0^{\ba_0}t^{\ba_{\eta\setminus\{b\}}}$.  We also have the standard binomial relations among the Gel'fand-Zetlin generators:\\

\[[g(\tau), \sum_{j \in \tau} a_j\be_j][g(\eta), \sum_{i \in \eta} a_i\be_i] = [g(\tau \cup \eta), \sum_{k \in \tau \cup \eta} a_k\be_k][g(\tau \cap \eta), \sum_{\ell \in \tau\cap \eta} a_\ell\be_\ell].\]\\

\noindent
These binomials hold among the initial forms of $\In_\prec \det[y(\tau)]t^{\ba_\tau}$; it is straightforward to verify that the versions involving $\In_\prec \sum_{j =1}^n \det[y(j,\tau)]t_0^{\ba_0}t^{\ba_\tau}$ work in the same way. 

The two families of binomials described lift to $\sum_{j \notin \tau} x_j^{\ba_j} P_\tau = 0$, and Pl\"ucker relations, respectively. Therefore, if we check that these relations suffice to generate the binomial ideal which vanishes on the initial forms, we have shown that the required relations generate $\ker(\Psi_\ba)$. We suppose we have two words $A_1\cdots A_n$, $B_1 \cdots B_n$ whose product maps to the same extended Gel'fand-Zetlin pattern. We must show that after applications of the above binomial relations, these words can be taken to a common word. 

First we observe that the contributions to the $\Z^{n+1}$ component from either word agree, and that if $\be_\ell$ appears with multiplicity $b_\ell$, we can divide $b_\ell$ by $a_\ell$ and factor the resulting number of copies of $[0, -\be_\ell]$ off both words. In particular, we may assume without loss of generality that the contribution of $\be_\ell$ is divisible by $a_\ell$. Similarly, any $[0, -\be_\ell]$ not supported by a pattern elsewhere in the words can be read off of the $\Z^{n+1}$ component, and factored off both sides. Also, any $[0, -a_\ell\be_\ell]$ for $[\ell] \subset \tau$ for $[g(\tau), \sum_{j \in \tau}a_j\be_j]$ appearing the word can be converted to $[0, -a_0\be_0]$ using the first relation above. So we may assume without loss of generality that both words do not contain a generator of the form $[0, -a_\ell\be_\ell]$ for any $\ell \in \{0, \ldots, n\}$.

Now, using the Gel'fand-Zetlin binomial relations, we can ensure that the underlying Gel'fand-Zetlin patterns in both words are the same, with possibly different $\Z^{n+1}$ components. Select a leading pattern on both sides, say $A_1 = [g(\tau), \sum_{j \in \tau \setminus\{b_1\}} a_j\be_j]$ and $B_1 = [g(\tau), \sum_{j \in \tau \setminus\{c_1\}} a_j\be_j]$, and suppose that $b_1 < c_1$.  We must have that $[b_1] \subset \tau$ and $[c_1] \subset \tau$. Moreover, the $\Z^{n+1}$ components of both words agree, so there must be some $A_2 = [g(\eta), \sum_{k \in \eta \setminus \{c_1\}} a_k \be_k]$ with $[c_1] \subset \eta$.  This means $b_1, c_1 \in \tau$, and $c_1 \in [c_1] \subset \eta$, so also $b_1 \in \eta$. Now we may apply the consequence of the Gel'fand-Zetlin relations: $[g(\tau), \sum_{j \in \tau \setminus\{b_1\}} a_j\be_j][g(\eta), \sum_{k \in \eta \setminus \{c_1\}} a_k \be_k] = [g(\tau), \sum_{j \in \tau \setminus\{c_1\}} a_j\be_j][g(\eta), \sum_{k \in \eta \setminus \{b_1\}} a_k \be_k]$, and factor off $[g(\tau), \sum_{j \in \tau \setminus\{c_1\}} a_j\be_j]$. This completes the proof. 
\end{proof}

\subsection{The Cox ring $\RR(\P(\E_\ba^\vee))$}\label{subsec-Coxdual}

In this section we obtain a presentation the Cox ring $\RR(\P\E_\ba^\vee)$ from a presentation of the Cox ring $\RR(\FL(\E_\ba))$.  For a general toric vector bundle $\E$ of rank $r$ over a smooth, projective toric variety $X(\Sigma)$ the class group of $\FL(\E)$ is naturally a product:\\

\[\CL(\FL(\E)) \cong \CL(X(\Sigma))\times \CL(\FL_r),\]\\

\noindent
where $\FL_r$ denotes the full flag variety of the vector space $\C^r$, see \cite[Proposition 3.5]{George-Manon}. For a moment we draw from representation theory and write $\CL(\FL_r)$ as $\bigoplus_{i = 1}^{r-1} \Z\omega_i$, wnere $\omega_i$ denotes the $i$-th fundamental weight of the reductive group $\SL_r$. Each positive combination $\lambda = \sum_i^{r-1} n_i \omega_i$ corresponds to a Schur functor $\S_\lambda$, which is an operation which applies to both vector spaces and vector bundles. These are precisely the effective classes on $\FL_r$. If $(\bd, \lambda) \in \CL(\FL(\E))$ is effective, restriction of this class to the identity fiber of $\FL(\E)$ gives an effective class of $\FL_r$.  The Picard variety of $\FL_r$ is a point, so this implies that $\lambda$ is a positive combination of the $\omega_i$.  In this case, the global sections of $(\bd, \lambda)$ can be realizes as the following section space on $X(\Sigma)$:\\

\[H^0(\FL(\E), \O(\bd, \lambda)) \cong H^0(X(\Sigma), \O(\bd)\otimes \S_\lambda(\E)).\]\\

\noindent
In the special case that $\lambda = \sum_{i =1}^r n_i\omega_i$ with $n_i = 0$ for $i < r-1$, $\S_\lambda(\E)$ is of the form $\Sym^n(\bigwedge^{r-1}\E)$.  In this way we see that the Cox ring of $\P(\bigwedge^{r-1}\E)$ is naturally the subalgebra of $\RR(\FL(\E))$ supported on the subgroup $\CL(X(\Sigma))\times \Z\omega_{r-1} \subset \CL(\FL(\E))$. Finally, $\bigwedge^{r-1}\E$ and $\E^\vee$ differ by tensoring by the determinant line bundle $\det^{-1}(\E)$, so their projectivizations agree.  We summarize these observations below.

\begin{proposition}\label{prop-inv-dual}
    Under the grading by $\CL(\FL(\E))$, $\RR(\P(\E^\vee))$ is the subalgebra of $\RR(\FL(\E))$ supported on the subgroup $\CL(X(\Sigma))\times \Z\omega_{r-1} \subset \CL(\FL(\E))$.
\end{proposition}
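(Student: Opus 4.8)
The plan is to unwind the definition of the Cox ring of a projectivization in terms of graded pieces of $\RR(\FL(\E))$, following exactly the chain of isomorphisms sketched in the paragraph preceding the statement. First I would recall the splitting $\CL(\FL(\E)) \cong \CL(X(\Sigma)) \times \CL(\FL_r)$ from \cite[Proposition 3.5]{George-Manon} and fix the identification $\CL(\FL_r) \cong \bigoplus_{i=1}^{r-1} \Z\omega_i$. The key structural input is the section-space formula $H^0(\FL(\E), \O(\bd, \lambda)) \cong H^0(X(\Sigma), \O(\bd) \otimes \S_\lambda(\E))$ for effective $\lambda$, which realizes each graded piece of $\RR(\FL(\E))$ as a twisted Schur-functor section space on the base.

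Next I would specialize to the subgroup $\CL(X(\Sigma)) \times \Z\omega_{r-1}$. Here $\lambda = n\,\omega_{r-1}$ forces $\S_\lambda(\E) = \Sym^n(\bigwedge^{r-1}\E)$, so the subalgebra of $\RR(\FL(\E))$ supported on this subgroup is $\bigoplus_{\bd, n} H^0(X(\Sigma), \O(\bd) \otimes \Sym^n(\bigwedge^{r-1}\E))$. I would then identify this with $\RR(\P(\bigwedge^{r-1}\E))$: the Cox ring of a projectivized bundle $\P\F$ over $X(\Sigma)$ is the direct sum over $\bd$ and $n \geq 0$ of $H^0(X(\Sigma), \O(\bd) \otimes \Sym^n \F)$, with the algebra structure inherited from symmetric multiplication, so the two graded algebras coincide. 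Finally, I would invoke $\bigwedge^{r-1}\E \cong \E^\vee \otimes \det(\E)$, so $\P(\bigwedge^{r-1}\E) \cong \P(\E^\vee)$ as the twist by the line bundle $\det(\E)$ does not change the projectivization; this gives $\RR(\P(\E^\vee))$ as the claimed subalgebra.

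I expect the main obstacle to be verifying that the identification is an isomorphism of \emph{graded algebras}, not merely a degree-by-degree isomorphism of section spaces. The restriction-to-the-identity-fiber argument and the section-space formula give the correct vector-space description in each graded component, but one must check that the multiplication on $\RR(\FL(\E))$ restricted to the subgroup $\CL(X(\Sigma)) \times \Z\omega_{r-1}$ agrees with the symmetric-power multiplication defining $\RR(\P(\bigwedge^{r-1}\E))$. This amounts to confirming that the Cartan product on the $\SL_r$-representation side, restricted to the ray $\Z\omega_{r-1}$, matches the symmetric multiplication $\Sym^m(\bigwedge^{r-1}\E) \otimes \Sym^n(\bigwedge^{r-1}\E) \to \Sym^{m+n}(\bigwedge^{r-1}\E)$, which it does because $\omega_{r-1}$ is a single fundamental weight and its multiples generate a copy of the symmetric algebra inside $\bigoplus_\lambda \S_\lambda$. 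I would also note that the twist by $\det(\E)^{-1}$ relating $\bigwedge^{r-1}\E$ to $\E^\vee$ shifts the $\CL(X(\Sigma))$-grading but is an isomorphism of Cox rings since it comes from tensoring by a line bundle, completing the proof.
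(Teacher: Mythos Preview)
Your proposal is correct and follows essentially the same approach as the paper. In fact, the paper does not give a separate proof for this proposition; the sentence ``We summarize these observations below'' indicates that the proof \emph{is} the preceding paragraph, and your outline reproduces exactly that chain of identifications (class-group splitting, section-space formula, specialization to $\lambda = n\omega_{r-1}$ giving $\Sym^n(\bigwedge^{r-1}\E)$, and the twist $\bigwedge^{r-1}\E \cong \E^\vee \otimes \det(\E)$). Your added remark about compatibility of the multiplicative structures is a welcome point of rigor that the paper leaves implicit.
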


Now we apply a little bit of invariant theory to find a presentation of $\RR(\P(\E_\ba^\vee))$.

\begin{proposition}\label{prop-presentation-ba-dual}
The Cox ring $\RR(\P\E_\ba^\vee)$ is the quotient of the polynomial ring $\C[x_i, Z_{jk}\mid 0 \leq i, j < k \leq n]$ by the ideal $\I_\ba^\vee$ generated by the relations $x_j^{a_j}Z_{ik} - x_k^{a_k}Z_{ij} - x_i^{a_i}Z_{jk}$ for $i < j < k$, and the quadratic Pl\"ucker relations on the $Z_{ij}$.
\end{proposition}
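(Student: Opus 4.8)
The plan is to realize $\RR(\P\E_\ba^\vee)$ as a graded subalgebra of $\RR(\FL(\E_\ba))$ via Proposition \ref{prop-inv-dual}, and then to read off its generators and relations from the presentation of $\RR(\FL(\E_\ba))$ given in Theorem \ref{thm-fullflag}. Since $\E_\ba$ has rank $n$, Proposition \ref{prop-inv-dual} identifies $\RR(\P\E_\ba^\vee)$ with the subalgebra of $\RR(\FL(\E_\ba))$ supported on $\CL(\P^n)\times\Z\omega_{n-1}$. The generators $x_j$, $P_\tau$, $P_{0,\tau}$ are homogeneous for the $\CL(\FL_n)$-grading, of weight $0$, $\omega_{|\tau|}$, and $\omega_{|\tau|+1}$ respectively (as recorded by the Gel'fand--Zetlin patterns $g(\tau)$ and $g(\tau^{*})$ in the proof of Theorem \ref{thm-fullflag}), and the fundamental weights $\omega_1,\dots,\omega_{n-1}$ are independent; hence a monomial has weight a multiple of $\omega_{n-1}$ only if it involves solely the $x_j$ together with the top coordinates $P_\tau$ with $|\tau|=n-1$ and $P_{0,\tau}$ with $|\tau|=n-2$. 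There are $n+\binom{n}{2}=\binom{n+1}{2}$ of these top coordinates, and I would relabel them as $Z_{ik}$, $0\le i<k\le n$, assigning to each the maximal minor on the columns $\{0,\dots,n\}\setminus\{i,k\}$ of $[y_{ij}]$, so that $Z_{0k}$ is an honest minor $P_{[n]\setminus\{k\}}$ while $Z_{ik}$ with $i\ge1$ is a modified coordinate $P_{0,[n]\setminus\{i,k\}}$. This produces the surjection $\Phi_\ba\colon\C[x_i,Z_{jk}]\twoheadrightarrow\RR(\P\E_\ba^\vee)$.

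To identify the relations I would pass to initial forms, as in Theorem \ref{thm-fullflag}. Using the monomial order there, a direct computation gives $\In_\prec\Phi_\ba(x_j)=(0,-\be_j)$ and $\In_\prec\Phi_\ba(Z_{ik})=(g([n]\setminus\{k\}),\,\bu_{ik})$, where $\bu_{ik}=\sum_{j\in[n]\setminus\{i,k\}}a_j\be_j$ when $i=0$ and $\bu_{ik}=a_0\be_0+\sum_{j\in[n]\setminus\{i,k\}}a_j\be_j$ when $i\ge1$; in particular all $Z_{ik}$ with a fixed larger index $k$ share the pattern $g([n]\setminus\{k\})$. One then checks that in each proposed relation $x_j^{a_j}Z_{ik}-x_k^{a_k}Z_{ij}-x_i^{a_i}Z_{jk}$ two of the three terms have equal initial form, the third being a strictly lower ``tail'', so that each three-term relation lifts a genuine binomial of the subsemigroup; likewise the Gel'fand--Zetlin straightening binomials among the maximal minors lift to the quadratic Pl\"ucker relations on the $Z_{ij}$. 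That both families actually lie in $\ker\Phi_\ba$ is verified directly from the formulas, giving $\I_\ba^\vee\subseteq\ker\Phi_\ba$.

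The crux is the reverse inclusion, which I would establish by repeating the argument of Theorem \ref{thm-fullflag} on the subsemigroup $S'\subset GZ_n\times\Z^{n+1}$ generated by the initial forms above: it suffices to prove that the initial forms of the two families of relations generate the toric ideal of $S'$, i.e. that these binomials solve the word problem for $S'$. The main obstacle, exactly as in Theorem \ref{thm-fullflag}, is the combinatorial bookkeeping of which pair of terms in each three-term relation is leading; this depends on the relative order of the patterns $g([n]\setminus\{k\})$, pins down the tail of each lifted relation, and requires checking that repeated application of the two families reduces any two words with equal image to a common word. Once the word problem is solved, the standard SAGBI/flatness lifting argument promotes the equality of initial ideals to $\I_\ba^\vee=\ker\Phi_\ba$, which is the assertion.

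As a consistency check, under the reindexing of the concluding remark the $x_j$ and $Z_{ik}$ become the Pl\"ucker coordinates of $\Gr_2(n+2)$ with each $P_{0i}$ replaced by $P_{0i}^{a_i}$: the quadratic Pl\"ucker relations on the $Z_{ij}$ are the Pl\"ucker relations for the $4$-subsets avoiding the vertex $0$, and the three-term relations are exactly those for the $4$-subsets containing $0$ after the substitution $P_{0i}\mapsto P_{0i}^{a_i}$. This recovers the description of $\I_\ba^\vee$ as a modified Pl\"ucker ideal.
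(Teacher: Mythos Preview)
Your identification of the subalgebra, its generators, and the containment $\I_\ba^\vee\subseteq\ker\Phi_\ba$ is correct and matches the paper. Where you diverge is in the reverse inclusion: you propose to re-run the SAGBI/word-problem machinery of Theorem~\ref{thm-fullflag} on the subsemigroup $S'$, flagging the word-problem verification as the ``main obstacle'' and leaving it undone. The paper sidesteps this entirely with a one-line grading argument. The generators $x_j,P_\tau,P_{0,\tau}$ of the ambient polynomial ring have $\CL(\FL_n)$-degree $0$ or a single fundamental weight $\omega_k$, so every monomial has degree in $\N\omega_1\oplus\cdots\oplus\N\omega_{n-1}$. Hence a homogeneous element of $\ker\Psi_\ba$ of degree a multiple of $\omega_{n-1}$, when written as $\sum h_if_i$ with each $h_if_i$ homogeneous, forces each appearing $f_i$ (and each $h_i$) to already have degree in $\N\omega_{n-1}$. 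Thus the elimination ideal $\ker\Psi_\ba\cap\C[x_i,Z_{jk}]$ is generated, as an ideal of $\C[x_i,Z_{jk}]$, by exactly those generating relations of Theorem~\ref{thm-fullflag} whose weight lies in $\Z\omega_{n-1}$: the quadratic Pl\"ucker relations among the top coordinates (weight $2\omega_{n-1}$) and the sum relations $\sum_{j\notin\tau}x_j^{a_j}P_{j\tau}$ with $|\tau|=n-2$ (weight $\omega_{n-1}$), which are precisely the stated three-term relations.

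Your approach would ultimately succeed, but it duplicates work already absorbed into Theorem~\ref{thm-fullflag}; the word problem for $S'$ is not a new obstacle, because the grading lets you pull the answer directly out of the already-solved word problem for the full semigroup. The payoff of the paper's route is brevity: no new combinatorics is needed once one observes that the $\omega_k$ are independent and all generator degrees are non-negative. The payoff of your route is self-containment---it would give an independent proof not relying on the full strength of Theorem~\ref{thm-fullflag}---but as written it is only a sketch, since the actual word-problem check is deferred.
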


\begin{proof}
By Proposition \ref{prop-inv-dual}, $\RR(\P(\E_\ba^\vee))$ is the subalgebra of $\RR(\FL( \E_\ba))$ generated by the $x_i$, along with the top Pl\"ucker generators $P_{\tau}$, where $|\tau| = n-1$. We let $Z_{ij}$ denote the Pl\"ucker generator corresponding to the complement of $\{i, j\}$ in $\{0, \ldots, n\}$.  The $\CL(\FL_n)$ components of the $\CL(\FL(\E_\ba))$ degrees of a Pl\"ucker generator $P_\tau \in\RR(\FL(\E_\ba))$ is the fundamental weight $\omega_k$, where $k = |\tau|$.  Consequently, any relation among the $Z_{ij}$ must come from the $\omega_{n-1}$ component of the ideal, which is in turn generated by the Pl\"ucker relations on the $Z_{ij}$.  
\end{proof}

\begin{remark}
In the case $a_j = 1$, $\E_\ba$ is the tangent bundle of $\P^n$, making $\E_\ba^\vee$ the cotangent bundle $\T^\vee\P^n$.  The presentation above shows that the transformation $x_i \to P_{0,i+1}$, $Z_{ij} \to P_{i+1,j+1}$ takes the Cox ring $\RR(\P\T^\vee\P^n)$ isomorphically onto the Pl\"ucker algebra of the Grassmannian variety $\Gr_2(n+2)$.  See \cite{GHPS} for a different account of this isomorphism.  
\end{remark}

\subsection{The ideals $\I_\ba$ and $\I_\ba^\vee$ are well-poised}

Well-poised ideals were introduced in \cite{Ilten-Manon}, where it is shown that any rational, complexity-1 $T$-variety has a well-poised embedding.  Operations which preserve the well-poised property, in particular GIT quotients and the construction of $T$-varieties from certain polyhedral divisors, were studied in \cite{Cummings-Manon}.  The following is the main result of \cite{CDMRV}, which gives a classification of well-poised hypersurfaces. 

\begin{theorem}\label{thm-well-poised-hypersurface}
    Let $p = \sum_{i =1}^m C_{\ba_i}x^{\ba_i} \in \C[x_1, \ldots, x_n]$, then every initial form of $p$ generates a prime ideal if and only if:\\

    \begin{enumerate}
    \item the supports of the monomials $x^{\ba_i}$ are disjoint,
    \item for any $i, j$, the greatest common divisor of the list $\{\ba_i, \ba_j\}$ is $1$.\\
    \end{enumerate}
\end{theorem}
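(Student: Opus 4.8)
The plan is to reduce well-poisedness of the principal ideal $\langle p\rangle$ to an irreducibility statement about the sub-sums of $p$, and then to split the argument into a clean sufficiency direction and a more delicate necessity direction. First I would record the standard reductions. For a principal ideal one has $\In_w(\langle p\rangle)=\langle \In_w(p)\rangle$, since initial forms are multiplicative and every element of $\langle p\rangle$ is a multiple of $p$; because $\C[x_1,\dots,x_n]$ is a UFD, $\langle \In_w(p)\rangle$ is prime exactly when $\In_w(p)$ is irreducible; and $w\in\Trop(\langle p\rangle)$ if and only if $\In_w(p)$ is not a monomial, i.e. has at least two terms. Thus the theorem becomes: every initial form $\In_w(p)$ with $\ge 2$ terms is irreducible if and only if (1) and (2) hold.

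For sufficiency I would first observe that (1) forces the exponent vectors $\ba_i$ to be affinely independent (a linear dependence must vanish block-by-block on the disjoint supports), so the Newton polytope is a simplex and every subset of the $\ba_i$ spans a face. Hence the $\ge 2$-term initial forms are exactly the sub-sums $q=\sum_{i\in S}C_{\ba_i}x^{\ba_i}$ with $|S|\ge 2$, and it suffices to show each is irreducible. Such a $q$ has no nonconstant monomial factor (disjoint supports, $\ge 2$ terms), so it is irreducible in $\C[x]$ iff it is irreducible in the Laurent ring. Writing $x^{\ba_i}=\mu_i^{d_i}$ with $d_i$ the $\gcd$ of the entries of $\ba_i$ and $\mu_i$ a primitive monomial, a monomial change of coordinates (an element of $\GL_n(\Z)$, block-diagonal over the disjoint supports) sends each $\mu_i$ to a fresh variable $t_i$ and $q$ to $\sum_{i\in S}C_{\ba_i}t_i^{d_i}$; condition (2) says precisely that the $d_i$ are pairwise coprime. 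I would finish by induction on $|S|$: view the sum as $C\,t_{i_0}^{d_{i_0}}+g$ over $R=\C[t_i:i\ne i_0]$, note that $g$ is irreducible (hence prime) by the inductive hypothesis, and apply the classical criterion for irreducibility of $t^{d}-a$ over $\mathrm{Frac}(R)$. Since $g$ is prime, the $g$-adic valuation of $a=-g/C$ equals $1$, which is divisible by no prime $\ell\mid d_{i_0}$ (and rules out the exceptional form $-4(\cdot)^4$ when $4\mid d_{i_0}$), so $a$ is not an $\ell$-th power and the form is irreducible.

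For necessity I would show that well-poisedness forces (1) and (2) by reading constraints off the low-dimensional faces of the Newton polytope. The initial form attached to any edge is supported on the collinear support points lying on that edge; after removing its monomial content such a form is a univariate polynomial in the primitive monomial along the edge, hence factors over $\C$ unless it is a \emph{primitive binomial} — exactly two support points, disjoint supports, and combined exponent entries of $\gcd$ one. So well-poisedness forces every edge to join two support points with disjoint supports and coprime GCD and to carry no further support point, which already gives (1) and (2) for every pair that happens to be an edge.

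The remaining step, which I expect to be the main obstacle, is to upgrade these edge conditions to the global statements (1) and (2). Concretely, I would need to show that a well-poised $p$ must have affinely independent exponents, so that the Newton polytope is a simplex and \emph{every} pair $\{\ba_i,\ba_j\}$ is an edge; then the edge analysis yields (1) and (2) for all pairs. Equivalently, one must rule out overlapping supports hidden on non-edges, i.e. show that any affine dependence (circuit) among the $\ba_i$ forces some face-form to factor. My plan would be to locate a minimal circuit and examine the smallest face containing it, arguing that its initial form either acquires a monomial factor or is supported on a proper sub-polytope forcing a nontrivial factorization; making this uniform over all circuit types is where the genuine work lies.
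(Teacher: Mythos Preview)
The paper does not give its own proof of this theorem: it is quoted verbatim as ``the main result of \cite{CDMRV}'' and used as a black box to deduce that $\I_\ba$ is well-poised. So there is no argument in the paper to compare your proposal against.

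On the merits of your proposal: the sufficiency direction is essentially complete and correct. The reduction to irreducibility of sub-sums via the simplex structure is right, the passage to the Laurent ring is justified exactly as you say, and the block-diagonal $\GL_n(\Z)$ change of coordinates reducing to $\sum_{i\in S} C_i\,t_i^{d_i}$ with pairwise coprime $d_i$ is clean. One small wrinkle: your induction invokes ``$g$ is irreducible by the inductive hypothesis,'' but when $|S|=2$ the polynomial $g$ is a monomial $C'\,t_j^{d_j}$, which is not irreducible. The argument still goes through if you use the $t_j$-adic valuation directly (valuation of $a$ equals $d_j$, coprime to $d_{i_0}$), so this is only a presentational issue, not a mathematical one.

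The necessity direction, however, is genuinely incomplete, and you say so yourself. Your edge analysis correctly forces (1) and (2) for every pair $\{\ba_i,\ba_j\}$ that spans an edge of the Newton polytope, but you have not shown that well-poisedness forces the Newton polytope to be a simplex, which is what you need to conclude that \emph{every} pair is an edge. Your proposed strategy (find a minimal circuit and exhibit a reducible face-form) is plausible but far from routine: simplicial polytopes such as the octahedron show that even after all proper faces are simplices, pairs of vertices need not share a facet, so an inductive argument through the face lattice does not close immediately. This is exactly the point where the work in \cite{CDMRV} is needed, and your write-up does not supply it.
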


\noindent
Theorem \ref{thm-well-poised-hypersurface} immediately implies that $\I_\ba$ is well-poised. In particular, $\I_\ba$ is generated by $\sum_{j =0}^n x_j^{a_i}Y_j$, which has disjointly supported monomials whose exponent vectors always contain a $1$. 

The Pl\"ucker ideal $I_{2, n+2}$ provides another example of a well-poised ideal.  The initial ideals coming from the tropical variety $\Trop(I_{2, n+2})$ were first studied by Speyer and Sturmfels in \cite{Speyer-Sturmfels}.  The initial ideals of points from the interior of a maximal face of $\Trop(I_{2, n+2})$ are precisely the prime binomial ideals which cut out certain toric varieties related to trivalent trees.  In particular, as a consequence of \cite[Theorem 3.4]{Speyer-Sturmfels}, it is known that the maximal faces of $\Trop(I_{2, n+2})$ are in bijection with trivalent trees $\T$ with $n+2$ leaves labelled in some way with the set $\{0, \ldots, n+1\}$.  Let $C_\T$ denote the face associated to the labelled tree $\T$. Let $E(\T)$ denote the set of edges of a trivalent tree $\T$.  Let $p_{ij} \in \Z^{E(\T)}$ denote the indicator vector of the unique path from leaf $i$ to leaf $j$ in $\T$.  We let $S_\T \subset \Z^{E(\T)}$ be the affine semigroup generated by the indicator vectors $p_{ij}$ for $0 \leq i < j \leq n+1$. The following is the main result we will need it what follows. 
    
\begin{theorem}\label{thm-pluckerinitial}
Let $\rho \in C_\T$, then the affine semigroup algebra $\C[S_\T]$ is isomorphic to the quotient algebra $\C[P_{ij} \mid 0 \leq i < j \leq n+1]/\In_\rho(I_{2, n+2})$.
\end{theorem}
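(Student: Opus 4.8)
**

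The plan is to establish the isomorphism between the affine semigroup algebra $\C[S_\T]$ and the quotient $\C[P_{ij}]/\In_\rho(I_{2,n+2})$ by identifying both with the toric variety attached to the trivalent tree $\T$. The key structural input is the phylogenetic interpretation of tropical Grassmannians from Speyer and Sturmfels \cite{Speyer-Sturmfels}: each maximal face $C_\T$ of $\Trop(I_{2,n+2})$ corresponds to a trivalent tree, and the tree metrics supported on $\T$ are exactly the points of $C_\T$. First I would define the monomial map $\Phi_\T: \C[P_{ij}] \to \C[S_\T] \cong \C[z_e \mid e \in E(\T)]$ sending each $P_{ij}$ to the monomial $\prod_{e \in p_{ij}} z_e$, i.e.\ to the indicator vector $p_{ij}$ of the leaf-to-leaf path. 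By definition, the image of $\Phi_\T$ is exactly $\C[S_\T]$, so it suffices to prove that $\ker(\Phi_\T) = \In_\rho(I_{2,n+2})$.

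The proof then splits into the two containments. For $\ker(\Phi_\T) \supseteq \In_\rho(I_{2,n+2})$: I would take a binomial generator of the initial ideal and verify it lies in $\ker(\Phi_\T)$ using the \emph{four-point condition} for tree metrics. The Pl\"ucker relation $P_{ik}P_{jl} - P_{il}P_{jk} - P_{ij}P_{kl}$ has, for any point $\rho$ in the interior of $C_\T$, an initial form which is the binomial picking out the two path-pairs whose total $\rho$-weight is minimal. Concretely, for four leaves $i,j,k,l$ exactly two of the three pairwise sums $p_{ik}+p_{jl}$, $p_{il}+p_{jk}$, $p_{ij}+p_{kl}$ of path indicator vectors coincide (the two that ``go through'' the internal edge of the quartet the same way), and these are the two that achieve the $\rho$-minimum; the initial binomial equates precisely those two, so it maps to zero under $\Phi_\T$. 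This shows every initial Pl\"ucker generator lies in $\ker(\Phi_\T)$, and since the $\In_\rho(I_{2,n+2})$ is generated by such initial forms (as $\rho$ lies in a single maximal cone), the containment follows.

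For the reverse containment $\ker(\Phi_\T) \subseteq \In_\rho(I_{2,n+2})$, equivalently equality, the cleanest route is a dimension and primeness argument rather than a direct generator-by-generator comparison. The toric ideal $\ker(\Phi_\T)$ is prime, and its variety has dimension equal to $\dim S_\T = |E(\T)|$, which for a trivalent tree with $n+2$ leaves is $2n+1$, matching $\dim C_\T$ and hence the Krull dimension of $\C[P_{ij}]/\In_\rho(I_{2,n+2})$. Since $\In_\rho(I_{2,n+2})$ is contained in the prime $\ker(\Phi_\T)$ and both cut out irreducible varieties of the same dimension, it remains only to know that $\In_\rho(I_{2,n+2})$ is itself prime (so that containment of primes of equal dimension forces equality). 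This primeness is exactly the well-poisedness of the Pl\"ucker ideal at the interior point $\rho$, which is the content of \cite[Theorem 3.4]{Speyer-Sturmfels}: the initial ideals at interior points of maximal faces are the prime binomial ideals of these tree toric varieties.

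I expect the main obstacle to be the careful bookkeeping in the first containment, namely verifying that the four-point condition pins down \emph{which} two of the three quadratic terms survive in $\In_\rho$ and that these are precisely the two with equal path-indicator sum. The combinatorics of ``which internal edge separates which leaves'' must be tracked consistently across all quartets $\{i,j,k,l\}$, and one must confirm that $\rho$ being in the \emph{interior} of $C_\T$ (rather than on a wall) guarantees a unique minimizing pair with no ties among all three terms. Once that is in hand, the reverse containment is essentially formal, resting on the dimension count together with the primeness of $\In_\rho(I_{2,n+2})$ supplied by \cite{Speyer-Sturmfels}.
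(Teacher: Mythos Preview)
The paper does not give its own proof of this theorem: it is stated as a known result drawn from \cite{Speyer-Sturmfels}, introduced with ``The following is the main result we will need,'' and immediately followed only by the remark that the statement can be restated as saying the binomial initial forms of the Pl\"ucker generators solve the word problem for $S_\T$. So there is no in-paper argument to compare against; the paper's ``proof'' is simply a citation.

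Your outline is broadly the right shape, but the reverse containment as written is circular. You reduce $\ker(\Phi_\T)\subseteq \In_\rho(I_{2,n+2})$ to the primeness of $\In_\rho(I_{2,n+2})$, and then justify that primeness by invoking \cite[Theorem 3.4]{Speyer-Sturmfels} in the form ``the initial ideals at interior points of maximal faces are the prime binomial ideals of these tree toric varieties.'' But that last clause is exactly the isomorphism $\C[P_{ij}]/\In_\rho(I_{2,n+2})\cong \C[S_\T]$ you are trying to establish. If you already grant that identification, there is nothing left to prove. To make the argument non-circular you need an input from \cite{Speyer-Sturmfels} that does not presuppose the full conclusion: either (i) that the quadratic Pl\"ucker generators form a tropical basis, so $\In_\rho(I_{2,n+2})$ is generated by their initial binomials, after which you must still show those binomials generate all of $\ker(\Phi_\T)$ (this is the ``word problem'' formulation the paper mentions after the statement, and is the real combinatorial content); or (ii) an independent primeness statement for $\In_\rho(I_{2,n+2})$, e.g.\ via a SAGBI/Khovanskii basis argument showing the associated graded algebra is a domain. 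Either route closes the gap, but as written your second containment assumes what it sets out to prove.
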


Recall that a solution to the \emph{word problem} for a semigroup $S$ with generators $A_1, \ldots, A_m$ is a minimal set of relations of the form $A_{i_1}\cdots A_{i_t} = A_{j_1}\cdots A_{j_s}$ which can be used to transform an arbitrary relation to a trivial relation.  Theorem \ref{thm-pluckerinitial} can be restated as saying that the binomial initial forms of the Pl\"ucker generators of $I_{2, n+2}$ with respect to $\rho \in C_\T$ solve the word problem for $S_\T$. We show that an almost identical statement holds for the ideals $\I_\ba^\vee$ and a semigroup which is closely related to $S_\T$. First we define a map of polynomial rings which relates $I_{2, n+2}$ to the $\I_\ba^\vee$:\\

\[\Phi_\ba: \C[P_{ij} \mid 0 \leq i < j \leq n+1] \to \C[x_i, Z_{j,k} \mid 0 \leq i, j< k \leq n ]\]

$$\Phi_\ba(P_{0,i+1}) = x_i^{a_i}$$

$$\Phi_\ba(P_{j+1,k+1}) = Z_{j,k}$$\\

\noindent
The Pl\"ucker generators of $I_{2, n+2}$ map precisely to the generators of $\I_\ba^\vee$ found in Proposition \ref{prop-presentation-ba-dual} under this map.  As a consequence, the prime ideal $\Phi_\ba^*(\I_\ba^\vee)$ is the Pl\"ucker ideal $I_{2, n+2}$.  Let $S_\T(\ba) \subset \Q^{\T}$ be the affine semigroup generated by the indicator vectors $p_{ij}$ for $0 < i < j \leq n+1$, and $\frac{1}{a_i}p_{0i}$ for $0 < i \leq n+1$.  

\begin{theorem}\label{thm-ba-dual-well-poised}
    The ideal $\I_\ba^\vee$ is well-poised. 
\end{theorem}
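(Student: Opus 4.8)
The plan is to deduce the well-poisedness of $\I_\ba^\vee$ directly from that of the Pl\"ucker ideal $I_{2,n+2}$, using the map $\Phi_\ba$ as a \emph{ramified cover}. The key structural observation is that $\Phi_\ba$ is injective, identifying $\C[P_{ij}]$ with the subalgebra $\C[x_0^{a_0}, \ldots, x_n^{a_n}, Z_{jk}] \subset \C[x_i, Z_{jk}]$, over which $\C[x_i, Z_{jk}]$ is a free module realized by the Kummer cover $x_i \mapsto x_i^{a_i}$. Since the Pl\"ucker generators of $I_{2,n+2}$ map onto the generators of $\I_\ba^\vee$ from Proposition \ref{prop-presentation-ba-dual}, we get that $\I_\ba^\vee = I_{2,n+2}\cdot \C[x_i,Z_{jk}]$ is the extension of $I_{2,n+2}$ along $\Phi_\ba$. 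The finite group $G = \prod_{i=0}^n \mu_{a_i}$ of roots of unity acts on $\C[x_i, Z_{jk}]$ by scaling the $x_i$, its invariant ring is exactly $\Phi_\ba(\C[P_{ij}])$, and $\I_\ba^\vee$ is $G$-stable. I would transfer primeness of initial ideals across this quotient, one tropical point at a time.

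First I would fix $w \in \Trop(\I_\ba^\vee)$ and let $\rho$ be the weight on $\C[P_{ij}]$ with $\rho(P_{0,i+1}) = a_i\, w(x_i)$ and $\rho(P_{j+1,k+1}) = w(Z_{jk})$, i.e.\ the restriction of $w$ to $\Phi_\ba(\C[P_{ij}])$. The technical heart of the proof is the identity
\[ \In_w(\I_\ba^\vee) = \In_\rho(I_{2,n+2})\cdot \C[x_i, Z_{jk}], \]
asserting that forming initial ideals commutes with extension along $\Phi_\ba$. I would prove this by decomposing along the grading of $\C[x_i, Z_{jk}]$ by the character group $\prod_i \Z/a_i\Z$ of $G$. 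Since the generators of $\I_\ba^\vee$ lie in the invariant (degree-zero) part, any $g \in \I_\ba^\vee$ can be written $g=\sum_\alpha x^\alpha\,\Phi_\ba(f_\alpha)$ with $f_\alpha \in I_{2,n+2}$ and $0 \leq \alpha_i < a_i$, where distinct exponents $\alpha$ occupy distinct characters. Because $\Phi_\ba$ matches $w$-weights with $\rho$-weights monomial-by-monomial, $\In_w(g)$ is computed character-by-character, and each surviving summand $x^\alpha\Phi_\ba(\In_\rho(f_\alpha))$ lies in the extension of $\In_\rho(I_{2,n+2})$; the reverse inclusion is immediate from $\Phi_\ba(\In_\rho(f)) = \In_w(\Phi_\ba(f))$. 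This identity also shows $\rho \in \Trop(I_{2,n+2})$, so $\In_\rho(I_{2,n+2})$ is prime by the (given) well-poisedness of $I_{2,n+2}$.

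With the identity in hand I would finish by a Kummer-descent argument. Writing $C = \C[P_{ij}]/\In_\rho(I_{2,n+2})$, a domain, and $\pi_i$ for the image of $P_{0,i+1}$, the displayed identity gives
\[ \C[x_i,Z_{jk}]/\In_w(\I_\ba^\vee) \cong C[x_0, \ldots, x_n]/(x_0^{a_0} - \pi_0, \ldots, x_n^{a_n} - \pi_n). \]
I would show this is a domain by adjoining the $x_i$ successively, each step being a domain provided $x_i^{a_i}-\pi_i$ is irreducible over the fraction field of the previous stage, hence provided $\pi_i$ is not a proper power. The $\Z^{n+2}$-grading $\deg P_{ij} = \be_i + \be_j$ does this bookkeeping: it descends to $C$ because $\In_\rho(I_{2,n+2})$ is homogeneous for it (the grading torus commutes with the degeneration), $\pi_i$ is homogeneous of the primitive degree $\be_0 + \be_{i+1}$, and the degrees introduced by the earlier roots $x_0, \ldots, x_{i-1}$ never involve the coordinate $\be_{i+1}$, so $\tfrac1p(\be_0+\be_{i+1})$ cannot lie in the enlarged grading lattice for any prime $p\mid a_i$ (together with the exceptional $-4$ case when $4\mid a_i$). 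On a maximal face this recovers precisely $\C[S_\T(\ba)]$, since adjoining $a_i$-th roots of the monomial $\pi_i = \chi^{p_{0,i+1}}$ to $\C[S_\T]$ produces the semigroup generated by $S_\T$ together with the classes $\tfrac{1}{a_i}p_{0,i+1}$.

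I expect the main obstacle to be the first displayed identity, namely controlling $\In_w$ under the ramified extension $\Phi_\ba$: unlike in an unramified situation, the extension of a prime along a Kummer cover can easily fail to be prime, so the argument must genuinely use the character grading to compute the initial ideal rather than naively extend a Gr\"obner basis. Everything else reduces to the grading computation above and to citing the well-poisedness of $I_{2,n+2}$; the advantage of this route is that it handles the non-maximal faces of $\Trop(\I_\ba^\vee)$ uniformly, bypassing any separate word-problem analysis for degenerate trees.
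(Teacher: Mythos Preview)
Your proposal is correct and takes a genuinely different route from the paper's proof. The paper argues combinatorially: it restricts to points $\rho$ mapping into the interior of a maximal face $C_\T$ of $\Trop(I_{2,n+2})$, and shows that the initial forms of the generators of $\I_\ba^\vee$ solve the word problem for the semigroup $S_\T(\ba)$, by stripping off the $\frac{1}{a_i}p_{0,i+1}$ contributions (read from the leaf edges) and reducing to the known word problem for $S_\T$; non-maximal faces are then handled by the general fact that primeness of initial ideals on maximal faces of the tropical variety propagates downward. Your argument is more algebraic and uniform: the identity $\In_w(\I_\ba^\vee) = \In_\rho(I_{2,n+2})\cdot\C[x_i,Z_{jk}]$ holds for every $w$, so the question becomes whether adjoining $a_i$-th roots of the $\pi_i$ to the domain $C$ preserves integrality, and the $\Z^{n+2}$-grading settles this for all faces at once. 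The paper's route has the advantage of making the toric special fiber $\C[S_\T(\ba)]$ explicit, which is precisely what is used downstream in Section~\ref{sec-nok}; your route cleanly separates the two ingredients (well-poisedness of $I_{2,n+2}$ and the Kummer cover) and avoids the word-problem bookkeeping. The one lemma you invoke implicitly---that in a domain graded by a lattice $\Lambda$, a nonzero homogeneous element of degree $a\notin p\Lambda$ is not a $p$-th power in the fraction field---is standard (take the leading-term valuation associated to any term order on $\Lambda$, which extends to a homomorphism $K^\times\to\Lambda$), and it also disposes of the $-4K^4$ exceptional case since $a\notin 2\Lambda$ already.
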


\begin{proof}
There is an induced map $\Phi_\ba^*: \Trop(\I_\ba^\vee) \to \Trop(I_{2,n+2})$, which is always onto by \cite[Lemma 3.2.13]{Maclagan-Sturmfels}. For $\rho = (x_0, \ldots, x_n, \ldots, z_{ij}, \ldots) \in \Trop(\I_\ba^\vee)$ the pullback map on tropical varieties gives $\Phi_\ba^*(\rho) = (a_0x_0,\ldots, a_nx_n, \ldots, z_{ij}, \ldots) \in \Trop(I_{2, n+2})$, which is $1-1$.  

Now fix $\rho \in \Trop(\I_\ba^\vee)$, and suppose that $\Phi_\ba^*(\rho)$ lands in the interior of $C_\T \subset \Trop(I_{2,n+2})$.  We have $\langle \Phi_\ba(\In_{\Phi_\ba^*\rho}(I_{2, n+2})) \rangle \subset \In_\rho(\I_\ba^\vee)$. The initial forms of the generating set of $\I_\ba^\vee$ are the image of the $C_\T$-initial forms of the Pl\"ucker generators of $I_{2, n+2}$, and so are contained in the set $\Phi_\ba(\In_{\Phi_\ba^*\rho}(I_{2, n+2}))$.  Therefore, it suffices to show that these initial forms generate a prime ideal of height equal to the height of $\I_\ba^\vee$. We show this by arguing that these initial forms solve the word problem for $S_\T(\ba)$. Let $p_1\cdots p_m = q_1 \cdots q_m$ be an equation in the generators of $S_\T(\ba)$, in particular, the induced weighting $w$ of $E(\T)$ by these two words coincide. For either word, the contribution of $\frac{1}{a_i}p_{0i}$ modulo $a_i$ can be read off the edge containing the $i$-th leaf of $\T$ in its boundary. It follows that if we assume (without loss of generality) that $p_1\cdots p_m$ and $q_1 \cdots q_m$ share no generators, the contribution of each $\frac{1}{a_i}p_{0i}$ to each must be divisible by $a_i$. Now, after grouping these generators into subwords of length $a_i$, we have a word problem identical to that of $S_\T$. This is solvable by the initial forms $\In_\rho(\I_\ba^\vee)$ by construction. Finally, we observe that $\I_\ba^\vee$ is prime and homogeneous, so the fact that any $\rho \in \Trop(\I_\ba^\vee)$ has a prime initial ideal follows from this fact for a $\rho$ coming from the interior of a maximal face. 
\end{proof}

For a divisor class $(\alpha, \beta) \in \CL(\P\E)$ let $\RR_{(\alpha, \beta)}(\P\E)$ denote the graded subring given by the global sections of the positive multiples of $(\alpha, \beta)$. The following is a consequence of \cite[Theorem 3.1]{Cummings-Manon}.

\begin{corollary}
Let $\E$ be an irreducible toric vector bundle of rank $n$ on $\P^n$, and let $(\alpha, \beta)$ be a pseudo-effective divisor class on $\P\E$, then the ring $\RR_{(\alpha, \beta)}(\P\E)$ has a presentation by a well-poised ideal. 
\end{corollary}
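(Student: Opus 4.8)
The plan is to reduce to the already-established well-poisedness of the full Cox ring and then invoke the stability of the well-poised property under GIT-type quotients from \cite[Theorem 3.1]{Cummings-Manon}. By Kaneyama's classification \cite{Kaneyama88}, any irreducible toric vector bundle $\E$ of rank $n$ on $\P^n$ is either $\E_\ba$ or $\E_\ba^\vee$ for some $\ba \subset \Z_{>0}$. By Theorem \ref{thm-main-wellpoised}, the Cox ring $\RR(\P\E)$ is therefore presented by one of the well-poised ideals $\I_\ba$ or $\I_\ba^\vee$ inside a polynomial ring graded by $\CL(\P\E)$. Because $\P\E$ is a Mori dream space (Corollary \ref{cor-allirredflag}), this presentation is finitely generated, and the section ring $\RR_{(\alpha,\beta)}(\P\E)$ is finitely generated for every pseudo-effective class $(\alpha,\beta)$.

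Next I would realize $\RR_{(\alpha,\beta)}(\P\E) = \bigoplus_{m \geq 0} H^0(\P\E, m(\alpha,\beta))$ as the graded subalgebra of $\RR(\P\E)$ supported on the submonoid $\Z_{\geq 0}(\alpha,\beta) \subset \CL(\P\E)$, in the same spirit as the subring description of $\RR(\P(\E^\vee))$ inside $\RR(\FL(\E))$ used in Proposition \ref{prop-inv-dual}. Writing $T_{\CL} = \Hom(\CL(\P\E),\C^*)$ for the Cox torus acting on $\Spec \RR(\P\E)$, this subalgebra is the ring of semi-invariants cut out by restricting the grading to the weights in $\Z_{\geq 0}(\alpha,\beta)$; equivalently, it is obtained from the invariant ring under the codimension-one subtorus $\ker\!\bigl((\alpha,\beta)\colon T_{\CL} \to \C^*\bigr)$ by passing to its non-negative part. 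This is precisely the GIT-quotient-type operation considered in \cite{Cummings-Manon}. Applying \cite[Theorem 3.1]{Cummings-Manon} to the well-poised presentation of $\RR(\P\E)$ then yields a well-poised presentation of the quotient, hence of $\RR_{(\alpha,\beta)}(\P\E)$.

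The main obstacle will be matching the section-ring construction precisely to the hypotheses of \cite[Theorem 3.1]{Cummings-Manon}: one must verify that the sub-grading, equivalently the subtorus, to which the theorem is applied is the correct one, and that pseudo-effectivity of $(\alpha,\beta)$ is exactly what guarantees both the finite generation of $\RR_{(\alpha,\beta)}(\P\E)$ and the non-emptiness of the relevant semistable locus, so that the invariant ring genuinely recovers the section ring rather than a degenerate quotient. Once this translation is in place, well-poisedness is immediate from the cited theorem together with Theorem \ref{thm-main-wellpoised}, and no further tropical computation along the lines of the proof of Theorem \ref{thm-ba-dual-well-poised} should be needed.
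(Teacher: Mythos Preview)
Your proposal is correct and follows essentially the same approach as the paper: the paper's proof is the single sentence ``The following is a consequence of \cite[Theorem 3.1]{Cummings-Manon},'' relying on the well-poisedness of $\I_\ba$ and $\I_\ba^\vee$ established in Theorem~\ref{thm-main-wellpoised}. Your write-up simply unpacks why that citation applies, via the GIT/subtorus description of $\RR_{(\alpha,\beta)}(\P\E)$ inside $\RR(\P\E)$, which is exactly the intended mechanism.
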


For $\rho \subset |\Sigma_n|$ a ray we let $\Bl_\rho\P^n$ denote the corresponding toric blow-up of projective space and $\beta_\rho: \Bl_\rho\P^n \to \P^n$ be the blow-down map.  Given a toric vector bundle $\E$ over $\P^n$ with $\P\E$ a Mori dream space, it is natural to ask when the projectivization of the pullback bundle $\beta_\rho^*\E$ is also a Mori dream space.  A sufficient condition for this to occur, along with a presentation of the Cox ring of $\P\beta_\rho^*\E$ is given in \cite[Theorem 6.13]{Kaveh-Manon-tvb}.  Roughly speaking, one tests whether or not a certain point of $\Trop(L)$ derived from $\rho$ belongs to a certain polyhedral complex derived from $\Trop(\I)$.  If the ideal $\I$ presenting the Cox ring of $\P\E$ is well-poised, this subcomplex is all of $\Trop(L)$, meaning that $\P\beta_\rho^*\E$ is always a Mori dream space, regardless of the choice of ray $\rho$.  This leads us to the following corollary. 

\begin{corollary}\label{cor-blowup}
    Let $\E$ be an irreducible bundle of rank $n$ on $\P^n$, then for any ray $\rho$, $\P(\beta_\rho^*\E)$ is a Mori dream space. 
\end{corollary}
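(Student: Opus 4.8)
The corollary follows by feeding the well-poisedness of the presenting ideals (Theorem \ref{thm-main-wellpoised}, established as Theorem \ref{thm-ba-dual-well-poised} for the dual case and via Theorem \ref{thm-well-poised-hypersurface} for $\I_\ba$) into the blow-up criterion of \cite[Theorem 6.13]{Kaveh-Manon-tvb}. The key observation, already flagged in the paragraph preceding the statement, is that the sufficient condition in that theorem is phrased in terms of a certain point $w(\rho) \in \Trop(L)$ derived from the ray $\rho$ lying inside a polyhedral subcomplex of $\Trop(L)$ derived from the tropicalization of the presenting ideal $\I$. When $\I$ is well-poised, this subcomplex is all of $\Trop(L)$, so the membership test is satisfied automatically for every $\rho$.

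**The steps.** First I would reduce to the two cases $\E = \E_\ba$ and $\E = \E_\ba^\vee$: by Kaneyama's classification \cite{Kaneyama88}, any irreducible rank $n$ bundle on $\P^n$ is one of these two (up to tensoring by a line bundle, which does not change $\P\E$ or the pullback $\P\beta_\rho^*\E$). Second, I would record the relevant pairs: $(L_n, D_\ba)$ with $\I_\ba = \langle \sum_j x_j^{a_j}Y_j\rangle$ presenting $\RR(\P\E_\ba)$ (Proposition \ref{prop-presentation-ba}, Proposition \ref{prop-presentation-ba}), and $(L_n^\vee, D_\ba^\vee)$ with $\I_\ba^\vee$ presenting $\RR(\P\E_\ba^\vee)$ (Proposition \ref{prop-presentation-ba-dual}). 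Third, I would invoke Theorem \ref{thm-main-wellpoised} to conclude both $\I_\ba$ and $\I_\ba^\vee$ are well-poised, so that every initial ideal $\In_w(\I)$ for $w \in \Trop(\I)$ is prime; equivalently, by the definition of well-poised, the associated polyhedral subcomplex of $\Trop(L)$ (resp.\ $\Trop(L_n^\vee)$) that appears in the hypothesis of \cite[Theorem 6.13]{Kaveh-Manon-tvb} coincides with the full tropical variety $\Trop(L)$. Fourth, for an arbitrary ray $\rho \subset |\Sigma_n|$, the point $w(\rho)$ produced by the blow-up construction is a point of $\Trop(L)$, hence automatically lies in this subcomplex; applying \cite[Theorem 6.13]{Kaveh-Manon-tvb} then yields that $\P(\beta_\rho^*\E)$ is a Mori dream space.

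**The main obstacle.** The only genuine content beyond citation-chaining is verifying that the polyhedral subcomplex appearing in the hypothesis of \cite[Theorem 6.13]{Kaveh-Manon-tvb} is precisely the locus whose defining condition is the primality of all the relevant initial ideals, so that well-poisedness forces it to be everything. I would make this identification explicit: the cited theorem tests whether the flat of $\MM(L)$ cut out by $w(\rho)$ is compatible with some initial degeneration of the Cox ring, and well-poisedness guarantees that \emph{every} tropical point gives a prime (hence admissible) degeneration, removing the obstruction for all $\rho$ simultaneously. The subtlety is purely bookkeeping — matching the combinatorial data of the blow-up (a single added ray, hence a single new row in the diagram) against the tropical hypothesis — rather than a new estimate or construction, so I expect no serious difficulty once the two formulations are aligned.
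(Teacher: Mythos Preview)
Your proposal is correct and follows essentially the same approach as the paper: the argument in the paragraph immediately preceding Corollary~\ref{cor-blowup} is exactly the one you outline, namely that well-poisedness of $\I_\ba$ and $\I_\ba^\vee$ (Theorem~\ref{thm-main-wellpoised}) forces the polyhedral subcomplex in the hypothesis of \cite[Theorem~6.13]{Kaveh-Manon-tvb} to be all of $\Trop(L)$, so the criterion is satisfied for every ray $\rho$. Your explicit reduction to the two Kaneyama types is a reasonable extra sentence of bookkeeping that the paper leaves implicit.
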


\subsection{Newton-Okounkov bodies}\label{sec-nok}

In \cite{George-Manon-Positivity} methods are described for computing global Newton-Okounkov bodies and the Newton-Okounkov bodies of divisors for a projectivized toric vector bundle $\P\E$ with $\RR(\P\E)$ generated in $\Sym$-degree $1$.  Any irreducible toric vector bundle of rank $n$ on $\P^n$ has this property by Proposition \ref{prop-presentation-ba} and \ref{prop-presentation-ba-dual}.  

The procedure in \cite[Section 3]{George-Manon-Positivity} uses the prime cone method for computing Newton-Okounkov bodies introduced in \cite{Kaveh-Manon-NOK}. We require a matrix which is built from a choice of points in a chosen prime cone.  Suppose that the toric vector bundle $\E$ comes from the pair $(L, D)$ with $L \subset \C[y_1, \ldots, y_m]$, and that $\RR(\P\E)$ is presented by the ideal $\I \subset \C[Y_1, \ldots, Y_m, x_1, \ldots, x_n]$.  Let $K \subset \Trop(L)$ be a maximal face.  The face $K$ corresponds to a full flag of flats $F_1 \subset \cdots \subset F_r = \MM(L)$. We let $E_K$ be the $r \times m$ matrix whose rows are the indicator vectors of the flats $F_i$ in the flag corresponding to $K$. We let $M$ be the following matrix:\\

\[M = \begin{bmatrix} D & -I \\ E_K & 0 \\  \end{bmatrix}. \]\\

\noindent
For a divisor class $(\alpha, \beta) \in \Z^2 \cong \CL(\P^n)\times \Z \cong \CL(\P\E)$ we let $P_{\alpha, \beta} \subset \Q_{\geq 0}^{m+n}$ be the rational polytope of points $(y_1, \ldots, y_m, x_1, \ldots, x_n)$ where $\beta = \sum_{j = 1}^m y_j$ and $\alpha = \sum_{j =1}^m y_j\bd_j - \sum_{i =1}^n x_i$. We define two polyhedra using the matrix $M$:\\

\[\Delta_C = M \circ \Q_{\geq 0}^{m+n}\]

\[\Delta_C(\alpha, \beta) = M \circ P_{\alpha, \beta}.\]\\/

\noindent
Finally, for $L$ and $\I$ as above, there is a surjection $\phi: \Trop(\I) \to \Trop(L)$:\\ 

\[\phi(v_1, \ldots, v_m, m_1, \ldots, m_n) = (\ldots, v_j + \sum_{i =1}^n m_iD_{ij}, \ldots),\]\\

\noindent
which takes faces of $\Trop(\I)$ into faces of $\Trop(L)$. The map $s: \Trop(L) \to \Trop(\I)$ given by $s(v_1, \ldots, v_m) = (v_1, \ldots, v_m, 0, \ldots, 0)$ is a section to $\phi$. Any point in $\Trop(\I)$ can be translated to a point in the image of $s$ by an element of the lineality space of $\Trop(\I)$. The following is \cite[Proposition 3.2]{George-Manon-Positivity}.

\begin{proposition}\label{prop-preimagecone}
Let $\E$ be a toric vector bundle over $\P^n$ with $\RR(\P\E)$ generated in $\Sym$-degree $1$, and let $C \subset \Trop(\I)$ be a prime cone of the form $C = \phi^{-1}K$ for a maximal face $K \subset \Trop(L)$, then $\Delta_C$ is a global Newton-Okounkov body of $\P\E$ and $\Delta_C(\alpha, \beta)$ is the Newton-Okounkov body of the divisor class $(\alpha, \beta)$. 
\end{proposition}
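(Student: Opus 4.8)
The plan is to deduce this from the prime cone method for Newton--Okounkov bodies developed in \cite{Kaveh-Manon-NOK}. Since $\RR(\P\E)$ is a positively (multi)graded normal domain presented by the homogeneous ideal $\I$, and $C \subset \Trop(\I)$ is a \emph{prime} cone of maximal dimension, that construction attaches to $C$ a full-rank homogeneous valuation $\v_C$ on $\RR(\P\E)$ whose associated graded algebra is the domain $\C[Y_1,\dots,Y_m,x_1,\dots,x_n]/\In_C(\I)$. The first task is to compute the values $\v_C(Y_j)$ and $\v_C(x_i)$ of the algebra generators and identify them with the columns of $M$. Using the section $s:\Trop(L)\to\Trop(\I)$, together with the fact that any point of $\Trop(\I)$ differs from a point of $s(\Trop(L))$ by an element of the lineality space, one reduces the computation of $\v_C$ on generators to reading off the weight data carried by $C=\phi^{-1}K$. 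Here the $D$-block records the contribution of the diagram grading, the $-I$-block the contribution of the fiber variables $x_i$, and the $E_K$-block the valuation attached to the maximal face $K$, whose rows are exactly the indicator vectors of the flats $F_1\subset\cdots\subset F_r$. I would then check directly that $\v_C(Y_j)$ is the $j$-th column of $M$ and $\v_C(x_i)$ is the $(m+i)$-th column.

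The second step is to verify that the generators $Y_1,\dots,Y_m,x_1,\dots,x_n$ form a Khovanskii basis for $\v_C$, equivalently that $\In_C(\I)$ is the full initial ideal and that the value semigroup $S(\RR(\P\E),\v_C)$ is generated by the columns of $M$. This is where the special shape $C=\phi^{-1}K$ is essential: because $K$ is a maximal face of $\Trop(L)$ and $\RR(\P\E)$ is generated in $\Sym$-degree $1$, the fibers of $\phi$ over the relative interior of $K$ are swept out by the lineality space, and the associated graded degenerates to the affine semigroup algebra on the columns of $M$. Granting this, the rational cone over the value semigroup is precisely $M\circ\Q_{\geq 0}^{m+n}$.

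The final step is to translate the value semigroup into Newton--Okounkov bodies via \cite{Kaveh-Khovanskii} and \cite{Lazarsfeld-Mustata}. The global body is the closure of the convex hull of all normalized values $\tfrac{1}{k}\v_C(f)$ as $f$ ranges over degree-$k$ elements; since the value semigroup is generated by the columns of $M$, this convex hull is exactly $M\circ\Q_{\geq 0}^{m+n}=\Delta_C$. For a fixed class $(\alpha,\beta)$, I would identify $P_{\alpha,\beta}$ as the polytope of nonnegative exponent vectors of generator monomials whose $\CL(\P\E)$-degree equals $(\alpha,\beta)$, the defining conditions $\beta=\sum_j y_j$ and $\alpha=\sum_j y_j\bd_j-\sum_i x_i$ being precisely the grading constraints of the presentation. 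Applying $\v_C$ (i.e.\ the matrix $M$) to this polytope then yields the degree-$(\alpha,\beta)$ slice of the value semigroup's convex hull, which is $\Delta_C(\alpha,\beta)=M\circ P_{\alpha,\beta}$.

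The main obstacle is the Khovanskii-basis verification in the second step. The identification of valuation values with columns of $M$ and the passage from the value semigroup to the bodies are essentially bookkeeping, but showing that the generator values already generate the full value semigroup, with no extra elements appearing, requires controlling the fibers of $\phi$ over $K$ and exploiting both primeness of $\In_C(\I)$ and generation in $\Sym$-degree $1$. In the applications relevant to this paper, primeness is supplied automatically by the well-poised property (Theorems \ref{thm-main-wellpoised} and \ref{thm-ba-dual-well-poised}), so the hypothesis that $C$ be prime holds for every maximal face $K\subset\Trop(L)$.
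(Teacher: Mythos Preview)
The paper does not prove this proposition; it is quoted verbatim as \cite[Proposition 3.2]{George-Manon-Positivity}, so there is no argument in the present paper to compare your sketch against. Your outline is consistent with the prime-cone method of \cite{Kaveh-Manon-NOK} that the cited reference builds on: one extracts a full-rank valuation from a maximal prime cone, checks that the presentation generators form a Khovanskii basis, and then reads off the global body and the $(\alpha,\beta)$-slices from the resulting value semigroup.

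That said, your sketch is honest about where the real content lies and does not actually complete it. The Khovanskii-basis step is not just ``bookkeeping plus primeness'': one needs that $C=\phi^{-1}K$ has the correct (maximal) dimension so that the associated valuation is full-rank, and one needs to argue that every element of the value semigroup is hit by a monomial in the generators, i.e.\ that $\In_C(\I)$ is a toric ideal on the columns of $M$. Primeness of $\In_C(\I)$ gives that the initial algebra is a domain, but you still have to match its Krull dimension with the rank of the sublattice generated by the columns of $M$ to conclude that the value semigroup is exactly $M\circ\Z_{\geq 0}^{m+n}$ rather than a finite-index extension. In the cited reference this is handled by analyzing the structure of $\phi^{-1}K$ (lineality space plus the section $s(K)$) together with the $\Sym$-degree~$1$ hypothesis; your paragraph gestures at this but does not carry it out. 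If you want a self-contained proof you should make that dimension count explicit.
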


The assumption that the inverse image of any $K \subset \Trop(L)$ consists of prime points in $\Trop(\I)$ is satisfied by $\E_\ba$ and $\E_\ba^\vee$ due to Theorems \ref{thm-pluckerinitial} and \ref{thm-ba-dual-well-poised}.  Next we show that for any maximal face $K$
of $\Trop(L_n)$ (respectively $\Trop(L_n^\ba)$), the inverse image lies in a face of $\Trop(\I_\ba)$ (respectively $\Trop(\I_\ba^\vee)$). Then we describe the indicator matrices $E_K$ for certain faces of $\Trop(L_n)$ and $\Trop(L_n^\vee)$. 

\begin{proposition}
    For the bundle $\E_\ba$, and any flag of subsets $A_1 \subset \cdots \subset A_{n-1} \subset A_n = \{0, \ldots, n\}$ with $|A_i| = i$ for $i < n$, we can take the matrix $E_K$ to have rows the indicator vectors of the $A_i$.  
\end{proposition}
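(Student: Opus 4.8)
The plan is to make the matroid $\MM(L_n)$ and its lattice of flats completely explicit, since once these are known the statement follows directly from the definition of $E_K$. By Proposition~\ref{prop-pair-ba} we have $L_n = \langle y_0 + \cdots + y_n\rangle$, so the only linear dependence among the coordinates $y_0, \ldots, y_n$ is the single relation $y_0 + \cdots + y_n = 0$. Thus every proper subset of $\{0, \ldots, n\}$ is independent, the full set being the unique circuit, and $\MM(L_n)$ is the uniform matroid $U_{n, n+1}$ of rank $n$ on the ground set $\{0, \ldots, n\}$. (This is already visible in the proof of Proposition~\ref{prop-pair-ba}, where one sees that $\MM(L_n)$ has exactly $n+1$ bases, namely the $n$-element subsets.)

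Next I would read off the flats. In $U_{n, n+1}$ a set $S$ has rank $\min(|S|, n)$. For $k < n$, a set of rank $k$ has size exactly $k$, and adjoining any further element raises the rank; hence each $k$-element subset is closed, and conversely every rank-$k$ flat is a $k$-element subset. On the other hand the closure of any $n$-element subset already equals the whole ground set, so the only flat of rank $n$ is $\{0, \ldots, n\}$ itself. Consequently a complete flag of flats of ranks $1, \ldots, n$ is precisely a chain
\[
A_1 \subset \cdots \subset A_{n-1} \subset A_n = \{0, \ldots, n\}, \qquad |A_i| = i \ \text{ for } i < n,
\]
and conversely every such chain is a flag of flats.

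Finally I would invoke the setup preceding Proposition~\ref{prop-preimagecone}: a maximal face $K \subset \Trop(L_n)$ corresponds to a full flag of flats $F_1 \subset \cdots \subset F_n = \MM(L_n)$ with $\textup{rank}(F_i) = i$, and $E_K$ is by definition the matrix whose rows are the indicator vectors of the $F_i$ (so that the top row, coming from $A_n$, is the all-ones vector). Taking $F_i = A_i$ for the chain above shows that $E_K$ may be taken to have rows the indicator vectors of the $A_i$, which is the assertion.

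I do not expect a genuine obstacle here, as the argument is bookkeeping on the matroid $U_{n,n+1}$. The single point needing care is the absence of a flat of rank $n$ other than the full ground set, equivalently the absence of a flat of size $n$: this is what forces the final inclusion $A_{n-1} \subset A_n$ to jump from size $n-1$ to size $n+1$ rather than incrementing the size by one, and it is exactly why the proposition constrains $|A_i|$ only for $i < n$.
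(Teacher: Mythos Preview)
Your argument is correct for the statement as literally written, and the matroid bookkeeping agrees with the paper's: the paper simply remarks that any $A\subset\{0,\ldots,n\}$ with $|A|<n$ is a flat of $\MM(L_n)$, which is exactly your description of the flats of $U_{n,n+1}$.

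The one substantive difference is that the paper's proof carries additional content that yours omits. In the paragraph immediately preceding the proposition the authors announce that they will show, for each maximal face $K\subset\Trop(L_n)$, that the inverse image $\phi^{-1}K$ lies in a single face of $\Trop(\I_\ba)$; this is what is required to apply Proposition~\ref{prop-preimagecone}. The paper proves this inside the present proof: writing a general interior point $u=\sum_i v_i\,\be_{A_i}\in K$, the section $s(u)\in\Trop(\I_\ba)$ weights the $x_i$ by $0$ and gives exactly two of the $Y_j$ the same minimal weight, so the initial form of the generator $\sum_j x_j^{a_j}Y_j$ is the same binomial for every $u$ in the interior of $K$. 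You do not address this point. Since the proposition is meant to supply both the description of $E_K$ \emph{and} the verification that $\phi^{-1}K$ is a face, you should add this short check; otherwise the subsequent appeal to Proposition~\ref{prop-preimagecone} is not justified.
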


\begin{proof}
First, any collection $A \subset \{0, \ldots, n\}$ with $|A| < n$ is a flat of $\MM(L_n)$.
Let $\be_A$ denote the indicator vector of the set $A$.  A point in $u \in K$ is positive weighted combination $u = \sum_{i =1}^n v_i \be_{A_i}$.  The image $s(u) \in \Trop(\I_\ba)$ does not affect the $x_i$ variables, and weights precisely two $Y_j$ equal and less than the others.  It follows that the initial form of $\sum_{j =0}^n x_j^{a_j}Y_j$ is the same for any point of $K$.
\end{proof}

The case $L_n^\vee$ is more involved. Our argument works by interpreting $s(u) \in \Trop(\I_\ba^\vee)$ for $u \in K \subset \Trop(L_n^\vee)$ as a weighted trivalent tree.  The latter determines a point of the tropical variety of the Grassmannian $\Gr_2(n+2)$ by \cite{Speyer-Sturmfels}, and therefore a point of $\Trop(\I_\ba^\vee)$ by Theorem \ref{thm-ba-dual-well-poised}.  In particular, we show that the tree type of $s(u)$ is the same for all $u$ taken from the interior of $K$.  To simplify matters we deal with flats which are spanned by subsets of the basis $\{z_{01}, \ldots, z_{n-1,n}\}$.  Other cases are related to this case by the action of the permutation group. Let a set $F \subset \{z_{ij} \mid i < j\}$ have the property that for indices $k < \ell$, if $\{z_{k,k+1}, \ldots, z_{\ell-1,\ell}\} \subset F$ then $z_{k,\ell} \in F$.  

\begin{proposition}
    For the bundle $\E_\ba^\vee$, and any flag of subsets $F_1 \subset \cdots \subset F_{n-1} \subset F_n = \{z_{ij} \mid i < j\}$ we can take the matrix $E_K$ to have rows the indicator vectors of the $F_i$.
\end{proposition}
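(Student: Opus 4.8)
The plan is to run the same argument as in the preceding proposition, but with the elementary observation about equal coordinates replaced by the tree interpretation of $\Trop(I_{2,n+2})$ furnished by Theorem \ref{thm-pluckerinitial}. Using the action of the symmetric group $S_{n+1}$ on the indices $\{0,\dots,n\}$, I first reduce to a flag $F_1 \subset \cdots \subset F_n$ of flats spanned by subsets of the path basis $\{z_{01},\dots,z_{n-1,n}\}$, as arranged just before the statement. Viewing $\MM(L_n^\vee)$ as the graphical matroid of the complete graph on the vertex set $\{0,\dots,n\}$, each flat $F_i$ is the edge set of a partition $\pi_i$ of the vertices into their connected blocks, and a complete flag $F_1 \subset \cdots \subset F_n$ is exactly a maximal chain in the partition lattice, i.e. a sequence of $n$ block-merges encoding a rooted binary merge tree on the $n+1$ vertices.

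Next I compute, for an interior point $u = \sum_{i=1}^n v_i\be_{F_i} \in K$ with all $v_i > 0$, the image $\Phi_\ba^*(s(u)) \in \Trop(I_{2,n+2})$. Since $s$ sets every $x$-coordinate to $0$ and $\Phi_\ba^*$ rescales the $x_i$ by $a_i$, the resulting tropical Pl\"ucker vector assigns the weight $0$ to each $P_{0,i+1}$ and the weight $u_{jk} = \sum_{\{i\,:\,z_{jk}\in F_i\}} v_i$ to $P_{j+1,k+1}$. Because the $F_i$ increase, the index set $\{i : z_{jk}\in F_i\}$ is the up-set $\{i_0,\dots,n\}$ where $i_0$ is the level at which $j$ and $k$ first share a block, so $u_{jk} = \sum_{i \ge i_0} v_i$ is precisely the ultrametric attached to the merge chain.

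The third step is to name the tree. Under the relabeling $j \mapsto j+1$ the vertices $\{0,\dots,n\}$ become the leaves $\{1,\dots,n+1\}$ and the merge chain builds a rooted binary tree on them; attaching the remaining leaf $0$ (which carries the vanishing $x$-weights, consistent with its sitting at the root of an ultrametric) at the root yields an unrooted trivalent tree $\T$ on $\{0,\dots,n+1\}$. Theorem \ref{thm-pluckerinitial} then places $\Phi_\ba^*(s(u))$ in the cone $C_\T$, with combinatorial type read off from the merge chain alone. The decisive point is that the strict containments $F_1 \subsetneq \cdots \subsetneq F_n$ together with $v_i > 0$ force the defining four-point inequalities of $\T$ to hold strictly, so $\Phi_\ba^*(s(u))$ lands in the relative interior of $C_\T$ for every choice of positive weights.

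Finally, the injectivity of $\Phi_\ba^*$ on $\Trop(\I_\ba^\vee)$ and the relationship between $\In_\rho(\I_\ba^\vee)$ and $\In_{\Phi_\ba^*\rho}(I_{2,n+2})$ established in the proof of Theorem \ref{thm-ba-dual-well-poised} transport this constancy back: the initial ideal $\In_{s(u)}(\I_\ba^\vee)$ is independent of the interior point $u \in K$, so $\phi^{-1}(K)$ is a single prime cone, primeness being guaranteed by well-poisedness. Proposition \ref{prop-preimagecone} then applies verbatim with $E_K$ the matrix whose rows are the indicator vectors of the $F_i$. I expect the main obstacle to be the interiority claim of the third step, namely that strictly positive weights always produce a point in the open cone $C_\T$ rather than on one of its walls; this reduces to checking that no two splits of $\T$ become tied, which should follow directly from the strictness of the flat containments and the positivity of the $v_i$.
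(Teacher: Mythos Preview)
Your proposal is correct and follows essentially the same approach as the paper: both arguments show that for any interior point $u$ of $K$, the section $s(u)$ lands in the cone $C_\T$ of $\Trop(I_{2,n+2})$ attached to a single trivalent tree $\T$ determined by the flag alone, and then conclude via the identification of $\Trop(\I_\ba^\vee)$ with $\Trop(I_{2,n+2})$ from Theorem~\ref{thm-ba-dual-well-poised}. The only difference is packaging: the paper builds $\T$ by an explicit inductive recipe (adding one leaf and one internal edge per step of the flag, then adjusting leaf weights so the $x_i$-coordinates vanish), whereas you recognise the flag as a maximal chain in the partition lattice, read off $\T$ as the associated rooted binary merge tree with leaf $0$ grafted at the root, and identify the $Z_{jk}$-weights as the corresponding ultrametric; your interiority check via strict four-point inequalities is exactly what the paper's ``topology of $\T$ depends only on the flag'' is asserting.
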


\begin{proof}
    Each set $F$ with the above property is a flat of $\MM(L_n^\vee)$.

    A point $u \in K$ is a weighted combination $\sum_{i = 1}^n v_i\be_{F_i}$.
    The first flat $F_1$ is of the form $\{z_{i,i+1}$.  We start by building the tree $\T_1$ which has leaves labelled $i+1, i+2$, along with a new leaf $w''$, all tied to an internal vertex $w'$.  We label the edge between $w'$ and $w''$ with $v_1$.  
    
    Now suppose the tree $\T_j$ has been built, and that its leaves are labelled by the indices appearing in the elements of $F_k$ shifted up by $1$, along with a ``root" leaf $w$.  Moving to $F_{k+1}$ introduces a single new element of the form $z_{j,j+1}$. If both indices do not appear in $F_k$, we add in new leaves labelled $j+1, j+2$, a new internal vertex $w'$, and connect $w'$ to both indexed leaves and the root $w$. We the label the edge between $w$ and $w'$ with $v_{k+1}$.  If $z_{j,j+1}$ shares an index with $F_k$, we add in a single new leaf labelled by the new index, a new root $w'$, and connect the old root $w$ to the new root $w'$. The edge between the old and new root is then labelled $v_{k+1}$.  The last root is given the label $0$. Keeping with the definition of the section function, we then choose the weights on the leaf edges to be negative numbers so that the total weight of any path from $0$ to an index $i$ is $0$.  We then divide every weight on the resulting tree $\T$ by $2$. 
    
    We compute a point on $\Trop(\I_\ba^\vee)$ by sending $x_i$ to the negative of the total weight of those edges in the unique path from $0$ to the vertex $i+1$, and $Z_{ij}$ to the negative of the total weight of those edges in the unique path from $i+1$ to $j+1$.  In the second case, this is the sum of those $v_k$ corresponding to the flats $F_k$ which contain $z_{ij}$. In the first case, it is always $0$. As a consequence, the tree $\T$ produces the point $s(u) \in \Trop(\I_\ba^\vee)$. By construction, the topology of $\T$ only depends on the chosen flag of flats.    
\end{proof}

\begin{example} 
   Let $\ba = \{a_0, a_1, a_2, a_3\}$ be positive integers. We construct Newton-Okounkov bodies for
   the projectivization of the irreducible bundle $\E_\ba^\vee$. For the maximal flag of flats we choose $F_0 = \langle z_{01} \rangle$, $F_1 = \langle z_{01},z_{12} \rangle$, and $F_2 = \MM(L_n^\vee)$. The indicator vectors of this flag make the bottom three rows of the matrix $M$:

   \[  \begin{matrix}
        z_{01} & z_{02} & z_{03} & z_{12} & z_{13} & z_{23} & x_0 & x_1 & x_2 & x_3\\
        \hline\\
       
        0 & 0 & 0 & a_0 & a_0 & a_0 & -1 & 0 & 0 & 0\\
        0 & a_1 & a_1 & 0 & 0 & a_1 & 0 & -1 & 0 & 0\\
        a_2 & 0 & a_2 & 0 & a_2 & 0 & 0 & 0 & -1 & 0\\
        a_3 & a_3 & 0 & a_3 & 0 & 0 & 0 & 0 & 0 & -1\\
        1 & 1 & 1 & 1 & 1 & 1 & 0 & 0 & 0 & 0\\
        1 & 1 & 0 & 1 & 0 & 0 & 0 & 0 & 0 & 0\\
        1 & 0 & 0 & 0 & 0 & 0 & 0 & 0 & 0 & 0
   \end{matrix}\]\\

    \noindent
    The global Newton-Okounkov body associated to this choice of flag is $M\circ \Q_{\geq 0}^{6 + 4}$.  The Newton-Okounkov body of a divisor $(\alpha, \beta) \in \Z^2 \cong \CL(\P\E_\ba^\vee)$ is the image of the polytope $P_{\alpha, \beta} \subset \Q_{\geq 0}^{6+4}$ given by those tuples $(z_{01}, \ldots, z_{23}, x_0, \ldots, x_3)$ satisfying:\\

    \[ z_{01} + z_{02} + z_{03} + z_{12} + z_{13} + z_{23} = \beta\]
    \[ (a_2 + a_3)z_{01} + (a_1 + a_3)z_{02} + (a_1 + a_2)z_{03} + (a_0 + a_3)z_{12} + (a_0 +a_2)z_{13} + (a_0 + a_1)z_{23} - x_0 - x_1 -x_2 - x_3 = \alpha  \]\\

    \noindent
    under the matrix $M$.

    The flag $F_0 \subset F_1 \subset F_2 = \MM(L_n^\vee)$ corresponds to the tree in Figure \ref{fig-tree}. We label edges of $\T$ by the corresponding flat of $\MM(L_n^\vee)$. \\

\begin{figure}[ht]
\begin{tikzpicture}
\node[shape=circle,draw=black] (1) at (0,0) {1};
\node[shape=circle,draw=black] (2) at (4,0) {2};
\node[shape=circle,draw=black] (3) at (8,0) {3};
\node[shape=circle,draw=black] (4) at (12,0) {4};
\node[shape=circle,draw=black] (W1) at (2,-3) {$u$};
\node[shape=circle,draw=black] (W2) at (6,-3) {$v$};
\node[shape=circle,draw=black] (W3) at (10,-3) {$w$};
\node[shape=circle,draw=black] (0) at (8,-6) {0};

\path [-] (1) edge node[left] {} (W1);
\path [-] (2) edge node[left] {} (W1);
\path [-] (W1) edge node[above] {$F_0$} (W2);
\path [-] (3) edge node[left] {} (W2);
\path [-] (W2) edge node[above] {$F_1$} (W3);
\path [-] (4) edge node[left] {} (W3);
\path [-] (W3) edge node[right] {$F_2$} (0);
\end{tikzpicture}
\caption{The tree $\T$ corresponding to the flag $F_0 \subset F_1 \subset F_2$.}
\label{fig-tree}
\end{figure}
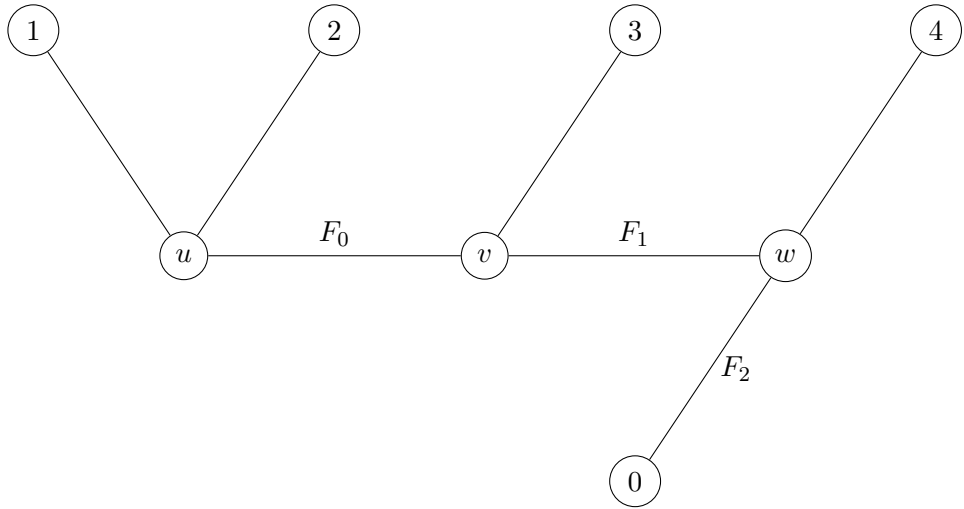

\noindent
When $\ba = \{1, 1, 1, 1\}$, the Cox ring $\RR(\P\E_\ba^\vee)$ is isomorphic to the Pl\"ucker algebra of the Grassmannian $\Gr_2(5)$.  The tree $\T$ above is recognizable as a \emph{caterpillar} tree, and the resulting Newton-Okounkov cone for $\Gr_2(5)$ is derived from the Gel'fand-Zetlin patterns with $2$ columns and $5$ rows. See \cite[Section 14.4]{Miller-Sturmfels} for an account of the Gel'fand-Zetlin degeneration of a flag variety.  
\end{example}

\section{Positivity properties of divisors}\label{sec-positivity}

In this section we compute the monoid $\Bpf(\P\E) \subset \CL(\P\E)$ of basepoint-free divisor classes for $\E$ an irreducible toric vector bundle of rank $n$ on $\P^n$.  We then prove that any Nef class is basepoint-free, and any ample class is very ample on $\P\E$.  The proof uses the fact that the Cox ring $\RR(\P\E)$ is always generated in $\Sym$-degree $1$; this means that $\RR(\P\E)$ is generated by the section spaces of the form $
H^0(\P^n, \O(d)\otimes \E)$. Generation in $\Sym$-degree $1$ implies that $\Bpf(\P\E)$ has an expression in terms of certain matroids associated to $\E$.  

\subsection{The basepoint-free monoid}

For the following see \cite[Proposition 3.3.2.6]{ADUH-book}. Let $X$ be a Mori dream space with Cox ring $\RR(X)$ generated by $f_1, \ldots, f_m$.  For any point $p \in X$ there is a monoid $S_p \subset \CL(X)$ consisting of those divisor classes which carry a section which does not vanish at $p$.  Clearly $\Bpf(X) = \bigcap_{p \in X} S_p$, and it is straightforward to show that each $S_p$ is generated by the degrees of the $f_i$ with $f_i(p) \neq 0$.  As a consequence, there are only a finite number of possible distinct $S_p$, however one still needs to find a set of representative points $p$.  

Now suppose we have a toric vector bundle $\E$ corresponding to a pair $(L, D)$ over a smooth, projective toric variety $X(\Sigma)$.  For each facet $\sigma \in \Sigma$ there is an initial linear ideal $\In_\sigma(L)$ (\cite[Section 2.1]{George-Manon-Positivity}), and an initial matroid $\MM(\In_\sigma(L))$ of $\MM(L)$.  Any maximal, non-trivial flat $F \subset \MM(\In_\sigma(L))$ defines a point $p_{\sigma, F} \in \P\E_\sigma$. The following is \cite[Proposition 4.1]{George-Manon-Positivity}.

\begin{theorem}\label{thm-bpf}
Let $\E$ and $X(\Sigma)$ be as above, and suppose that $\RR(\P\E)$ is generated in $\Sym$-degree $1$, then $\Bpf(\P\E) = \bigcap_{\sigma, F} S_{p_{\sigma, F}}$. 
\end{theorem}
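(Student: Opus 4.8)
The plan is to reduce the a priori infinite intersection $\Bpf(\P\E) = \bigcap_{p \in \P\E} S_p$ to the finite intersection over the distinguished points $p_{\sigma, F}$, using the two structural inputs available: generation of $\RR(\P\E)$ in $\Sym$-degree $1$, and the torus action on $\P\E$. The inclusion $\Bpf(\P\E) \subseteq \bigcap_{\sigma, F} S_{p_{\sigma, F}}$ is immediate, since the $p_{\sigma, F}$ form a subset of $\P\E$ and $\Bpf(\P\E) \subseteq S_p$ for every point $p$. All the work lies in the reverse inclusion, for which it suffices to prove the following domination statement: for every $p \in \P\E$ there is a distinguished point $p_{\sigma, F}$ whose set of vanishing generators contains that of $p$, equivalently $S_{p_{\sigma, F}} \subseteq S_p$. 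Granting this, $\bigcap_{\sigma, F} S_{p_{\sigma, F}} \subseteq S_p$ for every $p$, and intersecting over all $p$ recovers $\Bpf(\P\E)$.

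To establish the domination statement I would first pass from $p$ to a point lying over a torus-fixed point of $X(\Sigma)$. Choose the generators of $\RR(\P\E)$ to be eigenvectors for the torus $T$; then for a one-parameter subgroup $\lambda$ a generator $f$ satisfies $f(\lambda(t)\cdot p) = t^{k} f(p)$ for some weight $k$, so any generator vanishing at $p$ vanishes along the whole orbit and at its limit. Projecting $p$ to $X(\Sigma)$ lands it in a torus orbit whose closure contains the fixed point attached to some facet (maximal cone) $\sigma$; choosing $\lambda$ to realize this degeneration and lifting it to $\P\E$, the limit point $p'$ lies in the fiber $\P\E_\sigma$, and by the eigenvector computation its set of vanishing $\Sym$-degree $1$ generators \emph{contains} the vanishing set at $p$. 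Hence $S_{p'} \subseteq S_p$. Because $\RR(\P\E)$ is generated in $\Sym$-degree $1$, it is enough to track the degree $1$ generators, whose restrictions to $\P\E_\sigma$ are governed by the initial linear ideal $\In_\sigma(L)$ through the Klyachko filtration encoded in $(L,D)$.

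It then remains to analyze vanishing inside the single fiber $\P\E_\sigma$. Here the $\Sym$-degree $1$ generators restrict to the coordinate forms cut out by $\In_\sigma(L)$, so the subsets of generators that can simultaneously vanish at a point of $\P\E_\sigma$ are recorded by the flats of the initial matroid $\MM(\In_\sigma(L))$. Using the fiber torus I would degenerate $p'$ further to a point whose vanishing set is a maximal non-trivial flat $F$; this is the point $p_{\sigma, F}$ by definition, and the same eigenvector argument shows the degeneration only enlarges the vanishing set, so $S_{p_{\sigma, F}} \subseteq S_{p'} \subseteq S_p$. Combining the two reductions yields the domination statement and hence the theorem. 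I expect the main obstacle to be this second reduction: one must verify that the combinatorics of which degree $1$ generators vanish at a genuine closed point of $\P\E_\sigma$ is faithfully captured by the flats of $\MM(\In_\sigma(L))$ — in particular that every achievable vanishing pattern is dominated by one arising from a maximal flat — rather than merely bounded by the matroid in a coarser sense. Checking this amounts to translating the non-vanishing of each generator into membership in the appropriate step of the filtration defined by $\In_\sigma(L)$, which is exactly where the hypothesis that $\RR(\P\E)$ is generated in $\Sym$-degree $1$ becomes indispensable.
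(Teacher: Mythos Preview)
The paper does not actually prove this theorem: it is quoted verbatim as \cite[Proposition 4.1]{George-Manon-Positivity} and no argument is supplied here. So there is no in-paper proof to compare your proposal against.

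That said, your outline is the standard shape such an argument takes and is broadly sound: use the $T$-action to specialize an arbitrary $p$ into a fixed fiber $\P\E_\sigma$ (eigenvector generators guarantee the vanishing set can only grow under specialization), then identify vanishing patterns of the $\Sym$-degree $1$ generators inside that fiber with flats of $\MM(\In_\sigma(L))$, and pass to a maximal flat. One point to tighten: your second degeneration invokes a ``fiber torus,'' but the residual $T$-action on $\P\E_\sigma$ need not be rich enough to move $p'$ to the desired $p_{\sigma,F}$. What you actually want there is purely combinatorial rather than dynamical: the set of $j$ with $Y_j(p') = 0$ is already a flat of $\MM(\In_\sigma(L))$ (it is the index set of a linear subspace of the restricted forms), hence is contained in some maximal non-trivial flat $F$, and the point $p_{\sigma,F}$ has exactly that larger vanishing set by construction. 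No further limit is needed, and this is where the $\Sym$-degree $1$ hypothesis does its work. If you want to check your argument against the original, consult the cited reference.
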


If $\RR(\P\E)$ is generated in $\Sym$-degree $1$, the effective monoid is spanned by the class-group degrees of the generators $x_1, \ldots, x_n$ and $Y_1, \ldots, Y_m$.  Over $\P^n$, these degrees are $\deg(x_i) = (-1, 0)$ and $\deg(Y_j) = (\bd_j, 1)$, where $\bd_j = \sum_{i = 0}^n D_{ij}$.

We say the bundle $\E$ is a \emph{monomial bundle} \cite[Section 4.2]{George-Manon-Positivity} if each initial ideal $\In_\sigma(L)$ is a monomial ideal. Equivalently, $(L, D)$ defines a monomial ideal when, for any facet $\sigma \in \Sigma$, the minimal face of the Gr\"obner fan of $L$ containing the rows of $D$ corresponding to the rays $\sigma(1)$ is maximal. In this case, the flats $F \subset \MM(\In_\sigma (L))$ are complements of single elements.  If $\E$ is a bundle over $\P^n$ and the monoids $S_{p_{\sigma, F}}$ of a monomial bundle are generated by the classes $(-1, 0)$ and $(\bd_j, 1)$ for one of the generators $Y_j \in \RR(\P\E)$. In particular, any $S_{p_{\sigma, F}}$ in this case is freely generated. 
 
\begin{lemma}\label{lem-monomial}
    Let $\E$ be an irreducible toric vector bundle of rank $n$ over $\P^n$, then $\E$ is  monomial. In particular, for a facet $\sigma_i$ in the fan of $\P^n$,\\ 
    \begin{enumerate}
        \item $\In_{\sigma_i}(L_n) = \langle y_i \rangle \subset \C[y_0, \ldots, y_n]$,
        \item $\In_{\sigma_i}(L_n^\vee) = \langle z_{jk} \mid i \notin \{j, k\} \rangle \subset \C[z_{ij} \mid 0 \leq i < j \leq n]$.\\
    \end{enumerate}
\end{lemma}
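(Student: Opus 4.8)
The plan is to compute each initial ideal $\In_{\sigma_i}(L)$ directly from an explicit weight vector and then read off that it is monomial. Recall from Section \ref{sec-diagram} that $\In_{\sigma_i}(L)$ is the initial ideal $\In_{w(u)}(L)$ taken at the weight $w(u)$ attached to a lattice point $u$ in the relative interior of $\sigma_i$; since the adapted basis makes $\v$ linear on $\sigma_i$, this weight is the generic positive combination $w(u) = \sum_{\ell \neq i} c_\ell\, w_\ell$ (with each $c_\ell > 0$) of the rows $w_\ell$ of the diagram indexed by the rays $\be_\ell \in \sigma_i(1)$. Thus the whole problem reduces to evaluating these combinations on the explicit diagrams of Propositions \ref{prop-pair-ba} and \ref{prop-pair-dual-ba}, and checking that the resulting initial form generates a monomial ideal. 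Both initial ideals being monomial is exactly the assertion that $\E_\ba$ and $\E_\ba^\vee$ are monomial bundles, so this computation proves everything at once.

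For $L_n$ this is immediate. With the diagonal diagram $D_\ba$ one gets $w(u)_i = 0$ while $w(u)_j = c_j a_j > 0$ for $j \neq i$, so the minimum weight among the $y_j$ is attained uniquely at $y_i$ and $\In_{w(u)}(y_0 + \cdots + y_n) = y_i$. Hence $\In_{\sigma_i}(L_n) = \langle y_i\rangle$, which is monomial, giving (1).

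For $L_n^\vee$ I would work with the non-negative diagram $D_\ba'$, whose $\ell, jk$-entry is $0$ when $\ell \in \{j,k\}$ and $a_\ell$ otherwise. Writing $T = \sum_{\ell \neq i} c_\ell a_\ell$, a short computation gives $w(u)_{jk} = T - c_t a_t$ for an edge $z_{jk}$ incident to $i$ (with $t$ its other endpoint), and $w(u)_{jk} = T - c_j a_j - c_k a_k$ for an edge with $i \notin \{j,k\}$. Since $\MM(L_n^\vee)$ is the graphical matroid of $K_{n+1}$ on the edge set $\{z_{jk}\}$, the monomial initial ideal $\In_{w(u)}(L_n^\vee)$ is generated by the variables complementary to the maximum-weight spanning tree selected by $w(u)$ (this is the standard description of the monomial initial ideals of a linear ideal as complements of matroid bases). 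I would then show this tree is exactly the star $\{z_{i\ell} \mid \ell \neq i\}$ at the vertex $i$: for any non-star edge $z_{jk}$ the unique cycle it forms with the star is the triangle $z_{jk}, z_{ij}, z_{ik}$, and the weight differences $w(u)_{ij} - w(u)_{jk} = c_k a_k > 0$ and $w(u)_{ik} - w(u)_{jk} = c_j a_j > 0$ show that $z_{jk}$ is strictly the lightest edge on that cycle. By the cycle characterization of maximum-weight spanning trees this forces the star to be the selected basis, so $\In_{\sigma_i}(L_n^\vee) = \langle z_{jk} \mid i \notin \{j,k\}\rangle$, a monomial ideal, giving (2).

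The main obstacle is this second computation. One must be careful that the circuit generators of $L_n^\vee$ actually suffice to compute the initial ideal, equivalently that $\In_{w(u)}(L_n^\vee)$ is genuinely the complement of a basis of $\MM(L_n^\vee)$, and then identify the correct basis. The cleanest route is the spanning-tree cycle condition above, since it reduces the entire matter to the two one-line inequalities $c_j a_j, c_k a_k > 0$; the alternative of inspecting the initial form of every circuit relation $z_{pr} - z_{pq} - z_{qr}$ separately is possible but requires more bookkeeping and does not by itself certify that no further initial forms arise.
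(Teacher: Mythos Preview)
Your argument is correct. Part (1) is essentially identical to the paper's proof. For part (2) you take a genuinely different route: you package the problem as identifying the maximum-weight spanning tree of $K_{n+1}$ under the weight $w(u)$, then invoke the cycle property once to pin down the star at $i$. The paper instead argues circuit by circuit: for each pair $j,k$ with $i\notin\{j,k\}$ it uses the specific weight given by the sum of the $j$-th and $k$-th rows of $D_\ba'$ to see that the initial form of $z_{ik}-z_{ij}-z_{jk}$ is the monomial $z_{jk}$, and then appeals to a dimension count to conclude these monomials generate $\In_{\sigma_i}(L_n^\vee)$. Your approach is cleaner and makes transparent why the answer is the complement of a star, at the cost of importing the matroidal description of monomial initial ideals of a linear ideal; the paper's approach is more elementary and self-contained but hides the tree picture. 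Note also that your cycle inequalities $c_ja_j,\,c_ka_k>0$ are strict for \emph{every} positive combination, so genericity of the $c_\ell$ is not actually needed --- the star is the unique maximum-weight tree throughout the open face, which directly gives that the minimal Gr\"obner face containing the rows is maximal.
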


\begin{proof}
For both classes of ideal we consider the face $\sigma_0$ in the fan of $\P^n$ spanned by the elementary basis vectors. The ideal $L_n$ is generated by the form $y_0 + \cdots + y_n$.  The rows of $D_\ba$ corresponding to the elements of $\sigma_0(1)$ are each of the form $(0, \ldots, a_i, \ldots, 0)$ where $i \neq 0$.  The initial ideal of the face of the Gr\"obner fan of $L_n$ must be generated by a common refinement of the initial forms of these rows, and the only variable not given a positive weight by some row is $y_0$. This shows $\In_{\sigma_0}(L_n) = \langle y_0 \rangle$.

For $L_n^\vee$ we use the non-negative diagram $D_\ba'$.  Recall that the $i$-th row of this diagram weights $z_{jk}$ with $a_i$ if $i \notin \{j,k\}$ and $0$ otherwise.  We show that $\In_{\sigma_0}(L_n^\vee)$ must containing $z_{jk}$ for any $0 < j < k$. This proves the lemma for dimension reasons. The initial ideal of the minimal face of the Gr\"obner containing the rows from $\sigma_0(1)$ must itself be an initial ideal of the sum of any two rows.  Consider the initial ideal of the sum of the $j$ and $k$-th rows; this weights $z_{0k}$ with $a_j$, $z_{0j}$ with $a_k$ and $z_{jk}$ with $0$.  It follows that the initial form of $z_{0k} - z_{0j} - z_{jk}$ with respect to this sum is $z_{jk}$, and that $z_{jk} \in \In_{\sigma_0}(L_n^\vee)$ for all $0 < j < k$.
\end{proof}

\subsection{Embeddings into toric varieties}

Observe that Lemma \ref{lem-monomial} implies that for any irreducible toric vector bundle of rank $n$ on $\P^n$, $\Bpf(\P\E)$ is the intersection of saturated monoids, and is therefore saturated. This implies that any Nef class of $\P\E$ is basepoint free.  In this section we recover this fact and more by showing that any $\P\E$ comes with a useful embedding into a smooth, projective toric variety. 

Let $\phi:X \to Z$ be an embedding of a Mori dream space into a toric variety $Z$. This embedding is said to be \emph{neat} (\cite[Definition 3.2.5.2]{ADUH-book}) 
if it induces an isomorphism $\CL(X) \cong \CL(Z)$.  We say that $\phi$ is \emph{neat and tidy} it also induces $\Bpf(X) \cong \Bpf(Z)$. 

In \cite[Proposition 2.6]{George-Manon-Positivity} it is shown that if $\E$ is a toric vector bundle corresponding to the pair $(L, D)$ with $\RR(\P\E)$ generated in $\Sym$-degree $1$ then $\P\E$ has a neat embedding into the projectivization of a split toric vector bundle.  In particular, if the $Y_j \in \RR(\P\E)$ corresponding to the variables $y_j$ in the polynomial ring containing $L$ generate $\RR(\P\E)$, one can use the split toric vector bundle $\V_D = \bigoplus_{j =1}^m \O(D_j)$, where $\O(D_j)$ is the $T$-linearized line bundle corresponding to the $j$-th column of the diagram $D$ for the embedding. 

\begin{proposition}\label{prop-neatandtidy}
    Let $\E$ be an irreducible toric vector bundle of rank $n$ on $\P^n$, then $\P\E$ has a neat and tidy embedding into the projectivization of a split toric vector bundle.  As a consequence, any Nef class of $\P\E$ is basepoint free, and any ample class of $\P\E$ is very ample.  
\end{proposition}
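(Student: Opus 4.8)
The plan is to construct the neat and tidy embedding explicitly, building on the neat embedding guaranteed by \cite[Proposition 2.6]{George-Manon-Positivity}, and then upgrade ``neat'' to ``neat and tidy'' by identifying the basepoint-free monoids on both sides. Since $\RR(\P\E)$ is generated in $\Sym$-degree $1$ for both $\E_\ba$ and $\E_\ba^\vee$ (Propositions \ref{prop-presentation-ba} and \ref{prop-presentation-ba-dual}), the variables $Y_j$ (resp.\ $Z_{jk}$) corresponding to the coordinates of the ambient polynomial ring generate the Cox ring, so the cited proposition furnishes a neat embedding $\P\E \hookrightarrow \P\V_D$ into the projectivization of the split bundle $\V_D = \bigoplus_j \O(D_j)$. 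This already induces $\CL(\P\E) \cong \CL(\P\V_D)$, so the task reduces to matching the basepoint-free monoids under this isomorphism.

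First I would compute $\Bpf(\P\E)$ directly using Theorem \ref{thm-bpf} together with Lemma \ref{lem-monomial}. Because $\E$ is monomial, for each facet $\sigma_i$ the initial ideal $\In_{\sigma_i}(L)$ is a monomial ideal, so the relevant flats $F$ are complements of single elements and each monoid $S_{p_{\sigma_i,F}}$ is freely generated by $(-1,0)$ together with a single class $(\bd_j, 1)$ (the degree of a generator $Y_j$ or $Z_{jk}$ surviving at $p_{\sigma_i,F}$). The basepoint-free monoid is then the intersection $\bigcap_{\sigma,F} S_{p_{\sigma,F}}$, which by the discussion preceding Lemma \ref{lem-monomial} is an intersection of saturated (in fact freely generated) monoids, hence saturated. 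The split bundle $\P\V_D$ is itself a toric variety, so its basepoint-free monoid coincides with its nef monoid and is computed combinatorially from the diagram $D$.

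Next I would observe that the monomial structure is exactly what forces the two computations to agree. The key point is that the points $p_{\sigma_i, F}$ and the surviving generators $Y_j$ (resp.\ $Z_{jk}$) that define $\Bpf(\P\E)$ are precisely the torus-fixed data used to compute $\Bpf(\P\V_D)$ on the toric side; the degrees $(\bd_j,1)$ and $(-1,0)$ that span the monoids $S_{p_{\sigma_i,F}}$ are the same classes appearing on both $\P\E$ and $\P\V_D$, since the neat embedding respects the $\Sym$-grading and the divisor classes of the $x_i$ and $Y_j$. Thus under the isomorphism $\CL(\P\E)\cong\CL(\P\V_D)$ the two intersections of monoids coincide, giving $\Bpf(\P\E)\cong\Bpf(\P\V_D)$ and making the embedding neat and tidy. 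Finally, since $\P\V_D$ is a smooth projective toric variety, the Fujita-type positivity statements hold there by the toric case: every nef class is basepoint-free and every ample class is very ample. Transporting these back along the neat and tidy embedding—which identifies both the class groups and the basepoint-free monoids—yields that every nef class of $\P\E$ is basepoint-free and every ample class is very ample.

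The main obstacle I anticipate is the monoid-matching step: verifying that the flats $F$ of the monomial initial matroids, and the corresponding surviving generators, produce exactly the generators of the toric basepoint-free monoid of $\P\V_D$, with no spurious extra generators or missing ones. In particular one must check carefully that the index set of facets $\sigma_i$ together with the single-element-complement flats account for all the torus-invariant local data on the split side, using the explicit descriptions $\In_{\sigma_i}(L_n)=\langle y_i\rangle$ and $\In_{\sigma_i}(L_n^\vee)=\langle z_{jk}\mid i\notin\{j,k\}\rangle$ from Lemma \ref{lem-monomial}. Once this combinatorial bookkeeping is pinned down for both $\E_\ba$ and $\E_\ba^\vee$, the rest follows formally from the toric Fujita conjectures and the properties of a neat and tidy embedding.
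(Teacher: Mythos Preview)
Your proposal is correct and follows essentially the same route as the paper: construct the neat embedding $\P\E \hookrightarrow \P\V_D$ from generation in $\Sym$-degree $1$, compute $\Bpf(\P\E)$ via Theorem \ref{thm-bpf} and the monomial structure of Lemma \ref{lem-monomial}, match it with $\Bpf(\P\V_D)$, and then pull positivity back from the smooth toric variety $\P\V_D$. The only refinement the paper adds is that your ``monoid-matching obstacle'' is resolved by a single concrete check: since $\Bpf(\P\V_D) = \bigcap_j \Z_{\geq 0}\{(-1,0),(\bd_j,1)\}$ with $j$ running over \emph{all} columns of $D$, equality with $\Bpf(\P\E)$ amounts to showing that every generator $y_j$ (resp.\ $z_{jk}$) actually survives for some facet---i.e.\ lies outside $\In_{\sigma_\ell}(L)$ for some $\ell$---which is immediate from the explicit initial ideals in Lemma \ref{lem-monomial}.
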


\begin{proof}
We consider the pairs $(L_n, D_\ba)$ and $(L_n^\vee, D_\ba')$.  In both cases, the basepoint-free monoid of the corresponding bundle $\P\V_D$ is the intersection of the monoids $\Z_{\geq 0}\{(-1,0), (\bd_j, 1)\}$, where $\bd_j$ runs over all the classes corresponding to the columns of the diagram $D$. To prove the first statement, it suffices to show that the basepoint-free monoids of $\P\E_\ba$ and $\P\E_\ba^\vee$ have the same description.  This translates to showing that for any $y_i$ (respectively  $z_{jk}$) there is a face $\sigma_\ell$ of the fan of $\P^n$ for which $y_i$ (respectively $z_{jk}$) is not in the initial ideal $\In_{\sigma_\ell}(L_n)$ (respectively $\In_{\sigma_\ell}(L_n^\vee)$).  But this is a consequence of Lemma \ref{lem-monomial}.  

As a consequence, any ample class of $\P\E$ is the restriction of any ample class from $\P\V_D$. Any ample line bundle on a smooth toric variety is very ample, hence any ample class on $\P\E$ is very ample. 
\end{proof}

\bibliographystyle{alpha}
\bibliography{main}

\newcommand{\etalchar}[1]{$^{#1}$}
\begin{thebibliography}{ADHL15}

\bibitem[ADHL15]{ADUH-book}
Ivan Arzhantsev, Ulrich Derenthal, J\"{u}rgen Hausen, and Antonio Laface.
\newblock {\em Cox rings}, volume 144 of {\em Cambridge Studies in Advanced
  Mathematics}.
\newblock Cambridge University Press, Cambridge, 2015.

\bibitem[AI]{AltmannIlten}
K.~Altmann and N.~Ilten.
\newblock Fujita’s freeness conjecture for t-varieties of complexity one.
\newblock Michigan Math. J. 69(2): 323-340 (May 2020).

\bibitem[CDM{\etalchar{+}}]{CDMRV}
Joseph Cecil, Neelav Dutta, Christopher Manon, Benjamin Riley, and Angela
  Vichitbandha.
\newblock Well-poised hypersurfaces.
\newblock Comm. in Alg. 49, 2021, 6, 2645-2654.

\bibitem[CM20]{Cummings-Manon}
Joseph Cummings and Christopher Manon.
\newblock The well-poised property and torus quotients.
\newblock arXiv:2009.09105 [math.AG], 2020.

\bibitem[EH22]{Escobar-Harada}
Laura Escobar and Megumi Harada.
\newblock Wall-crossing for {N}ewton-{O}kounkov bodies and the tropical
  {G}rassmannian.
\newblock {\em Int. Math. Res. Not. IMRN}, (7):5152--5203, 2022.

\bibitem[EL]{Ein-Lazarsfeld}
Lawrence Ein and Robert Lazarsfeld.
\newblock Global generation of pluricanonical and adjoint linear series on
  smooth projective threefolds.
\newblock J. Amer. Math. Soc., 6(4):875–903, 1993.

\bibitem[Fah]{Fahrner}
A.~Fahrner.
\newblock Algorithms for embedded monoids and base point free problems.
\newblock J. Algebra, 502, (2018), 610-633.

\bibitem[Fuj]{Fujino}
O.~Fujino.
\newblock Notes on toric varieties from mori theoretic viewpoint.
\newblock Tohoku Math. J. (2) vol. 55 no. 4 (2003), 551–564.

\bibitem[GHPS12]{GHPS}
Jos\'{e} Gonz\'{a}lez, Milena Hering, Sam Payne, and Hendrik S\"{u}\ss.
\newblock Cox rings and pseudoeffective cones of projectivized toric vector
  bundles.
\newblock {\em Algebra Number Theory}, 6(5):995--1017, 2012.

\bibitem[GM]{George-Manon-Positivity}
Courtney George and Christopher Manon.
\newblock Positivity properties of divisors on toric vector bundles.
\newblock arXiv:.

\bibitem[GM23]{George-Manon}
Courtney George and Christopher Manon.
\newblock Cox rings of projectivized toric vector bundles and toric flag
  bundles.
\newblock to appear in J. Pure and Applied Algebra, 2023.

\bibitem[Gon12]{Gonzalez-rank2}
Jos\'{e}~Luis Gonz\'{a}lez.
\newblock Projectivized rank two toric vector bundles are {M}ori dream spaces.
\newblock {\em Comm. Algebra}, 40(4):1456--1465, 2012.

\bibitem[HMP10]{HMP}
Milena Hering, Mircea Musta\c{t}\u{a}, and Sam Payne.
\newblock Positivity properties of toric vector bundles.
\newblock {\em Ann. Inst. Fourier (Grenoble)}, 60(2):607--640, 2010.

\bibitem[HS10]{Hausen-Suss}
J\"{u}rgen Hausen and Hendrik S\"{u}\ss.
\newblock The {C}ox ring of an algebraic variety with torus action.
\newblock {\em Adv. Math.}, 225(2):977--1012, 2010.

\bibitem[IM19]{Ilten-Manon}
Nathan Ilten and Christopher Manon.
\newblock Rational complexity-one {$T$}-varieties are well-poised.
\newblock {\em Int. Math. Res. Not. IMRN}, (13):4198--4232, 2019.

\bibitem[Kan]{Kaneyama88}
T.~Kaneyama.
\newblock Torus-equivariant vector bundles on projective spaces.
\newblock Nagoya Math. J. Vol 111 (1988), 25-40.

\bibitem[Kan75]{Kaneyama}
T.~Kaneyama.
\newblock On equivariant vector bundles on an almost homogeneous variety.
\newblock {\em Nagoya Math. J.}, 57:65--86, 1975.

\bibitem[Kaw]{Kawamata}
Yujiro Kawamata.
\newblock On fujita’s freeness conjecture for 3-folds and 4-folds.
\newblock Math. Ann., 308(3):491–505, 1997.

\bibitem[KK12]{Kaveh-Khovanskii}
Kiumars Kaveh and A.~G. Khovanskii.
\newblock Newton-{O}kounkov bodies, semigroups of integral points, graded
  algebras and intersection theory.
\newblock {\em Ann. of Math. (2)}, 176(2):925--978, 2012.

\bibitem[Kly89]{Klyachko}
A.~A. Klyachko.
\newblock Equivariant bundles over toric varieties.
\newblock {\em Izv. Akad. Nauk SSSR Ser. Mat.}, 53(5):1001--1039, 1135, 1989.

\bibitem[KMa]{Kaveh-Manon-tvb}
Kiumars Kaveh and Christopher Manon.
\newblock Toric flat families, valuations, and applications to projectivized
  toric vector bundles.
\newblock arXiv:1907.00543 [math.AG].

\bibitem[KMb]{Kaveh-Manon-tvbval}
Kiumars Kaveh and Christopher Manon.
\newblock Toric vector bundles, valuations and tropical geometry.
\newblock arXiv:2304.11211 [math.AG].

\bibitem[KM19]{Kaveh-Manon-NOK}
Kiumars Kaveh and Christopher Manon.
\newblock Khovanskii bases, higher rank valuations, and tropical geometry.
\newblock {\em SIAM J. Appl. Algebra Geom.}, 3(2):292--336, 2019.

\bibitem[Laz04]{Lazarsfeld}
R.~Lazarsfeld.
\newblock {\em Positivity in Algebraic Geometry I}.
\newblock A Series of Modern Surveys in Mathematics. Springer, 2004.

\bibitem[LM09]{Lazarsfeld-Mustata}
Robert Lazarsfeld and Mircea Musta\c{t}\u{a}.
\newblock Convex bodies associated to linear series.
\newblock {\em Ann. Sci. \'{E}c. Norm. Sup\'{e}r. (4)}, 42(5):783--835, 2009.

\bibitem[MS05]{Miller-Sturmfels}
Ezra Miller and Bernd Sturmfels.
\newblock {\em Combinatorial commutative algebra}, volume 227 of {\em Graduate
  Texts in Mathematics}.
\newblock Springer-Verlag, New York, 2005.

\bibitem[MS15]{Maclagan-Sturmfels}
Diane Maclagan and Bernd Sturmfels.
\newblock {\em Introduction to tropical geometry}, volume 161 of {\em Graduate
  Studies in Mathematics}.
\newblock American Mathematical Society, Providence, RI, 2015.

\bibitem[Pay06]{Payne-Ampleness}
Sam Payne.
\newblock Fujita's very ampleness conjecture for singular toric varieties.
\newblock {\em Tohoku Math. J. (2)}, 58(3):447--459, 2006.

\bibitem[Rei]{Reider}
Igor Reider.
\newblock Vector bundles of rank 2 and linear systems on algebraic surfaces.
\newblock Ann. of Math. (2), 127(2):309–316, 1988.

\bibitem[SS04]{Speyer-Sturmfels}
David Speyer and Bernd Sturmfels.
\newblock The tropical {G}rassmannian.
\newblock {\em Adv. Geom.}, 4(3):389--411, 2004.

\bibitem[YZ20]{Ye-Zhu}
Fei Ye and Zhixian Zhu.
\newblock On {F}ujita's freeness conjecture in dimension 5.
\newblock {\em Adv. Math.}, 371:107210, 56, 2020.
\newblock With an appendix by Jun Lu.

\end{thebibliography}

\end{document}